\newcommand{\Banach}{\mathcal{C}_\oplus}
\DeclareMathOperator*{\spt}{spt}
\DeclareMathOperator{\proj}{proj}
\newcommand{\RR}{\mathbb{R}}
\newcommand{\R}{\RR}
\newcommand{\NN}{\mathbb{N}}
\newcommand{\eps}{ \varepsilon}
\newcommand{\ball}{\mathcal{B}}
\newcommand{\mykill}[1]{}
\crefname{equation}{}{} %
\theoremstyle{plain}
\newtheorem{theorem}{Theorem}[section]
\newtheorem{proposition}[theorem]{Proposition}
\newtheorem{lemma}[theorem]{Lemma}
\newtheorem{corollary}[theorem]{Corollary}
\theoremstyle{definition}
\newtheorem{remark}[theorem]{Remark}
\newtheorem{assumption}[theorem]{Assumption}
\crefname{assumption}{Assumption}{Assumptions}
\Crefname{assumption}{Assumption}{Assumptions}
\theoremstyle{remark}
\crefname{theorem}{Theorem}{Theorems}
\crefname{proposition}{Proposition}{Propositions}
\crefname{lemma}{Lemma}{Lemmas}
\crefname{corollary}{Corollary}{Corollaries}
\crefname{definition}{Definition}{Definitions}
\crefname{remark}{Remark}{Remarks}
\crefname{example}{Example}{Examples}
\crefname{assumption}{Assumption}{Assumptions}
\newlist{myenum}{enumerate}{3}
\setlist[myenum,1]{label={\rm (H\arabic*)},
                   ref  ={\rm (H\arabic*)}}
\crefname{myenumi}{property}{properties}
{%
\end{oldthebibliography}%
}
\begin{document}

\title{Linear Convergence of Gradient Descent\\for Quadratically Regularized Optimal Transport}
\date{\today}
\author{  
  Alberto Gonz{\'a}lez-Sanz%
  \thanks{Department of Statistics, Columbia University, ag4855@columbia.edu.} \and  Marcel Nutz%
  \thanks{Departments of Mathematics and Statistics, Columbia University, mnutz@columbia.edu. Research supported by NSF Grants DMS-2106056, DMS-2407074.} \and Andrés Riveros Valdevenito%
  \thanks{Department of Statistics, Columbia University, ar4151@columbia.edu.}
  }
  
\maketitle
\vspace{-1.5em}
\begin{abstract}
In optimal transport, quadratic regularization is an alternative to entropic regularization when sparse couplings or small regularization parameters are desired. Quadratic regularization penalizes transport couplings by the squared $L^2$ norm of their density, or equivalently by the $\chi^2$ divergence. While a number of computational approaches have been shown to work in practice, the dual problem is not strongly convex and theoretical convergence results are scarce. We focus on the dual gradient descent algorithm in a continuous setting and establish linear convergence in \(L^2\), that is, the $L^2$ distance between the iterates and the limiting potentials decreases exponentially fast. The proof is based on a spectral analysis of the linearized gradient descent operator at the optimum. We show that this operator is a strict contraction and that the nonlinear iteration inherits this property after a burn-in period.
\end{abstract}

\vspace{1em}

{\small
\noindent \emph{Keywords}
Gradient Descent; Optimal Transport; Quadratic Regularization

\noindent \emph{AMS 2020 Subject Classification}
49N10;  %
49N05;  %
90C25 %

}
 \vspace{0em}
\section{Introduction}

\paragraph{Synopsis.} This paper studies the dual problem of quadratically regularized optimal transport and its natural gradient descent iteration.  Unlike entropic regularization, quadratic regularization preserves sparsity of the optimal transport and remains numerically stable for
small regularization parameters. Its analysis, however, is less standard as sparsity entails a lack of strong convexity. Our main result shows linear convergence in \(L^2\) of the gradient iteration for continuous transport problems. Our main mathematical contribution is a spectral analysis of the linearized gradient descent operator at the optimal potentials. It proves that this operator is a strict contraction and implies the linear convergence result. Intuitively, it establishes that the dual objective exhibits nontrivial curvature near the optimum even though global strong convexity fails.

\paragraph{Background.} Optimal transport has become ubiquitous in many areas where distributions or data sets need to be compared, such as statistics, machine learning and image processing. Given compactly supported probability distributions~$P$ and~$Q$ on $\R^d$, the optimal transport problem with quadratic cost is
\begin{equation}\label{otIntro}
  {\rm OT}(P,Q)=  \inf_{\pi\in \Pi( P ,   Q )} \int \frac{1}{2}\|x-y\|^2d\pi(x,y),
\end{equation}
where $\Pi( P ,   Q )$ denotes the set of couplings; that is, probability distributions on $\R^d\times\R^d$ with marginals~$(P,Q)$. The optimal value ${\rm OT}(P,Q)$ defines the Wasserstein distance between~$P$ and~$Q$ and hence is the target of numerous computational approaches (see, e.g.,  \cite{Peyre.Cuturi.2019.Book}). Following~\cite{Cuturi.2013.Neurips}, entropic regularization and the corresponding Sinkhorn algorithm have become standard tools. The entropically regularized  optimal transport problem is
\begin{equation}
    \label{EOT}
    {\rm EOT}_{ \eps}(P,Q)= \inf_{\pi\in \Pi(P, Q)} \int \frac{1}{2}\|x-y\|^2d\pi(x,y)+\eps\,  {\rm KL}\big(\pi\vert P \otimes Q\big),
\end{equation}
where $\eps>0$ is a parameter determining the strength of regularization and ${\rm KL}(\pi\vert P \otimes Q)$ is the Kullback--Leibler divergence between~$\pi$ and the product~$P \otimes Q$. Sinkhorn's iteration can be described as the coordinate-ascent algorithm for the dual problem of~\eqref{EOT} which seeks a pair of functions $f,g:\R^d\to\R$ maximizing
\begin{equation}\label{dual.EOT}
    \int f(x)+ g(y) -\eps e^{\frac{f(x)+g(y)- \frac{1}{2}\|x-y\|^2}{\eps}}  d(P \otimes Q)(x,y) . %
\end{equation}
Thanks to the algebraic properties of the exponential function in~\eqref{dual.EOT}, the coordinate-wise maximization (i.e., optimizing separately $f$ or $g$) has a closed-form solution, leading to the iteration 
\begin{equation}\label{Sinkhorn}
        g_n(y) = - \varepsilon \log\left( \int e^{\frac{f_{n-1}(x) - \frac{1}{2}\|x - y\|^2}{\varepsilon}} \, dP(x) \right), \quad 
        f_n(x) = - \varepsilon \log\left( \int e^{\frac{g_n(y) - \frac{1}{2}\|x - y\|^2}{\varepsilon}} \, dQ(y) \right).
\end{equation}
The exponential also entails that the dual objective~\eqref{dual.EOT} is strongly convex (or strongly concave, to be precise, when restricted to uniformly bounded $f(x)+g(y)$). This implies (see \cite{Carlier.2022.SIOPT}) that the iteration converges linearly  to the maximizer of~\eqref{dual.EOT}, which in turn approximates the dual solution of the optimal transport problem~\eqref{otIntro} in the limit  $\eps\to0$ of vanishing regularization. Several other proofs of linear convergence are known, starting with \cite{FRANKLIN.1989} for the discrete case, and a recent body of literature analyzes the corresponding constants in detail (see~\cite{ChizatDelalandeVaskevicius.25} and the references therein). 

While Sinkhorn's algorithm has been very successful in many applications, it has limitations. Using KL divergence entails that the optimal coupling of~\eqref{EOT} always has full support (equal to the support of $P\otimes Q$). This is known as overspreading in applications, as the true optimal transport for~\eqref{otIntro} is typically sparse (even given by a deterministic map). For example, overspreading can correspond to blurring in an image processing task~\cite{blondel18quadratic} or bias in a manifold learning task~\cite{zhang.2023.manifoldlearningsparseregularised}. Separately, a computational limitation is faced when small regularization parameters~$\eps$ are desired to closely approximate~\eqref{otIntro}: since exponentially large and small values occur in~\eqref{Sinkhorn}, the algorithm tends to become unstable for small values of $\eps$, an issue that can be mitigated only to some extent \cite{Schmitzer.19}. See also \cite{Lahn.Mulchandani.Raghvendra.2019.NEURIPS}, where a Dijkstra-type search algorithm is proposed as a replacement.

\paragraph{QOT.} Starting with \cite{Muzellec.2017.AAAI,blondel18quadratic,EssidSolomon.18}, an alternate approach is to regularize with a different divergence. The most tractable choice is the $\chi^2$ divergence, or equivalently, penalization by the squared $L^2$~norm of the density, leading to the quadratically regularized optimal transport problem,
\begin{equation}\label{qotIntro}
  {\rm QOT}_\eps(P,Q)=  \inf_{\pi\in \Pi( P ,   Q )} \int \frac{1}{2}\|x-y\|^2d\pi(x,y) +\frac{\eps }{2}\left\| \frac{d \pi}{ d( P \otimes   Q )}\right\|^2_{L^{2}( P \otimes   Q )}.
\end{equation}
It is known that ${\rm QOT}_\eps(P,Q)$ approximates the unregularized optimal transport cost ${\rm OT}(P,Q)$ at rate \( \eps^{2/(d+2)} \) as $\eps\to0$ (see \cite{Eckstein.Nutz.2023}, and \cite{GarrizmolinaElAl.2024} for the leading constant). In contrast to entropic regularization, the optimal coupling~$\pi_*$ of~\eqref{qotIntro} has sparse support for small $\eps$; this has been observed empirically since the initial works (e.g., \cite{blondel18quadratic,EssidSolomon.18, Lorenz.2019,BayraktarEckstein.2025.BJ}) and established theoretically more recently in~\cite{WieselXu.24,GonzalezSanzNutz2024.Scalar,gonzalezsanz2026localsparsity}. See \cref{fig:Dim1} for an illustration.
 \begin{figure}[htb]
    \centering
\includegraphics[width=0.51\linewidth]{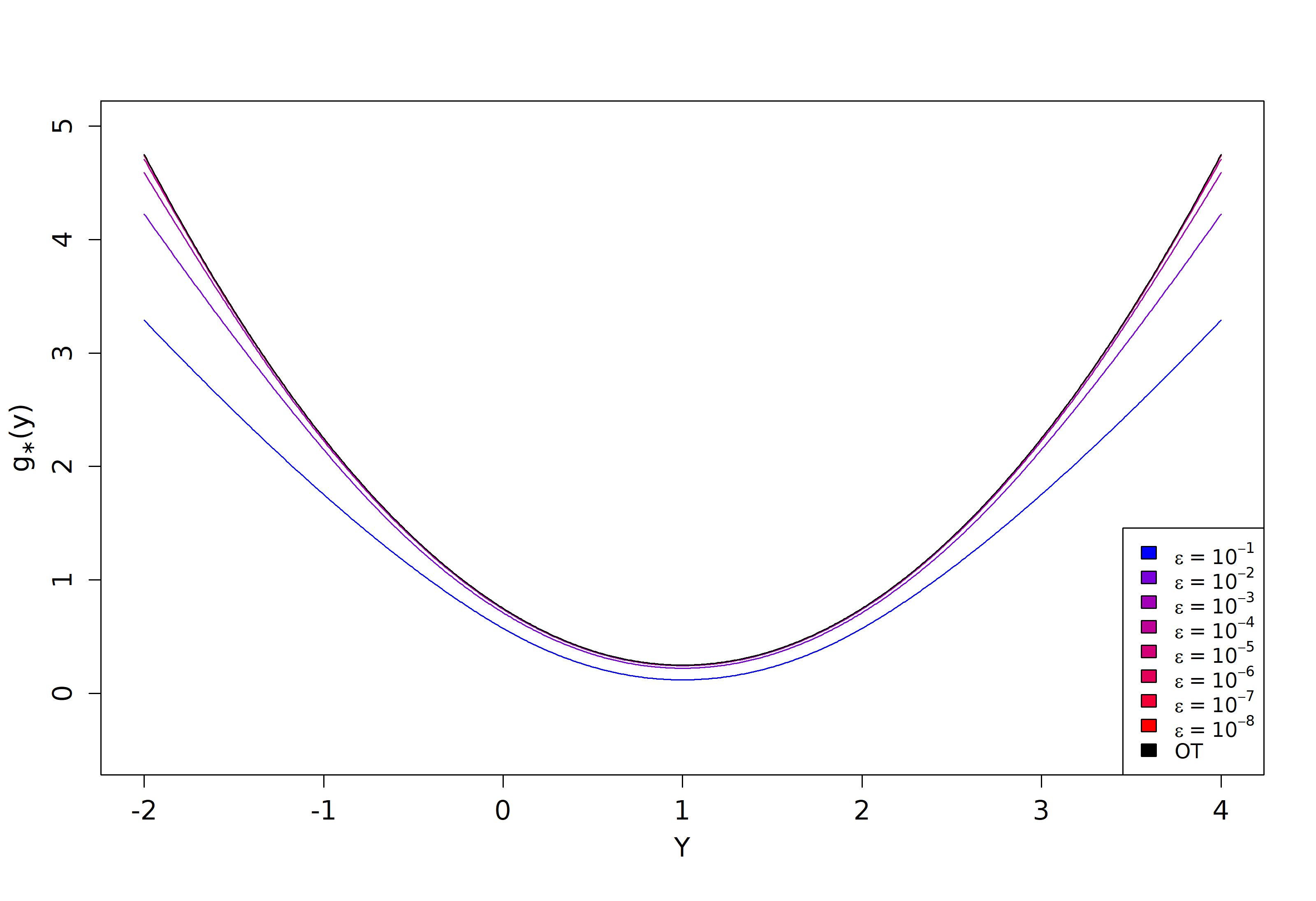}\includegraphics[width=0.51\linewidth]{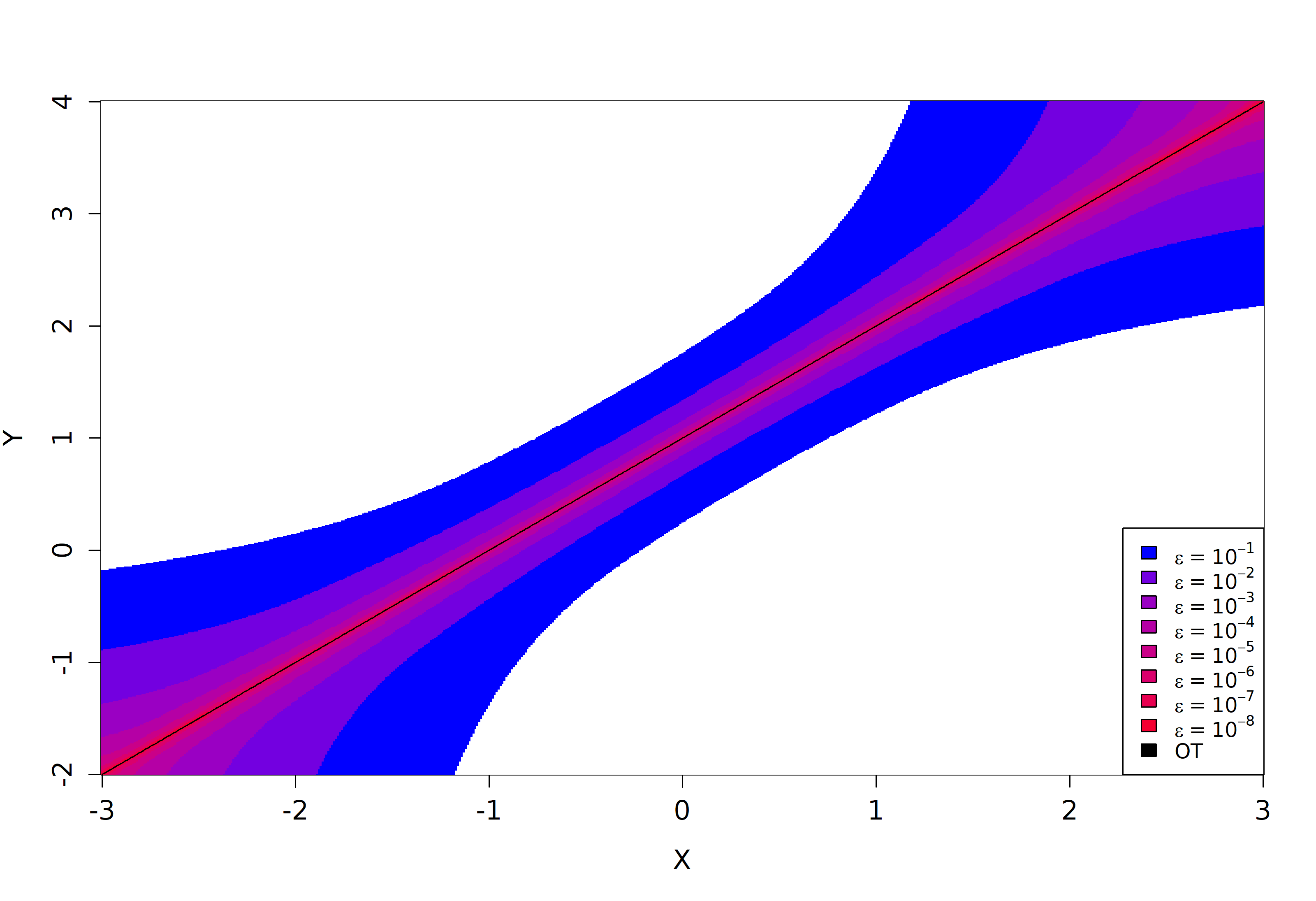}
    \caption{Transporting $P= N(0,1)$ to $Q=N(1,1)$; the measures are truncated to [-3,3] and [-2,4], respectively. Left: Dual solutions of ${\rm QOT}_\eps$ and OT. Right: Supports of the optimal couplings are sparse and converge to the optimal transport map.} 
    \label{fig:Dim1} 
\end{figure}
A number of computational approaches have been considered, mostly targeting the dual problem of~\eqref{qotIntro},
\begin{equation}\label{eq:DualIntro}
    \sup_{f,g: \R^d\to\R} \int f(x) + g(y) -\frac{1}{2\eps} \left(f(x)+ g(y)- \frac{1}{2}\|x-y\|^2\right)_+^{2} d(  P \otimes   Q )(x,y),
\end{equation}
or its first-order condition of optimality,
    \begin{equation}\label{SchrodingerIntro}
        \begin{cases}
            \varepsilon = \displaystyle\int \left( f(x) + g(y) - \tfrac{1}{2} \|x - y\|^2 \right)_+ \, dP(x), \\
            \varepsilon = \displaystyle\int \left( f(x) + g(y) - \tfrac{1}{2} \|x - y\|^2 \right)_+ \, dQ(y).
        \end{cases}
    \end{equation}
Here $(t)_+:=\max\{t,0\}$ denotes the positive part function; it appears as a consequence of the constraint that couplings are nonnegative measures. In the realm of discrete marginals, \cite{Muzellec.2017.AAAI} proposed a mirror gradient method while \cite{EssidSolomon.18} used a Newton-type
algorithm to solve a minimum-cost flow problem on a graph. In~\cite{blondel18quadratic}, the authors leveraged a generic L-BFGS solver and also introduced an alternating minimization scheme. A similar Gauss–Seidel method was suggested in~\cite{Lorenz.2019}; the idea is to alternately solve the equations in~\eqref{SchrodingerIntro}. While Sinkhorn's algorithm implements the analogous iteration for entropic regularization, the equations in~\eqref{SchrodingerIntro} cannot be solved in closed form. To implement this implicit method, \cite{Lorenz.2019} explored both direct search strategies and a semi-smooth Newton method. These approaches were further examined in~\cite{lorenz.2019.preprint} where the authors noted good empirical convergence but a high computational cost per iteration in large-scale settings. To mitigate this, they proposed several alternative methods, including cyclic projections, dual gradient descent, and its accelerated variant. While no theoretical analysis is given, their numerical experiments suggest that all three approaches are efficient and consistent. (Note that the numerical methods and experiments can be found in the preprint~\cite{lorenz.2019.preprint}; they are omitted in the journal version \cite{LorenzMahler.22}.) Several algorithms are also presented in \cite{Pasechnyuk2023Algorithms}, together with bounds for their arithmetic complexity given discrete marginals. In the continuous setting, several works including \cite{EcksteinKupper.21, GeneveyEtAl.16, GulrajaniAhmedArjovskyDumoulinCourville.17, LiGenevayYurochkinSolomon.20, seguy2018large} apply neural networks to the dual problem. For instance, \cite{LiGenevayYurochkinSolomon.20} uses neural networks and gradient descent to compute regularized Wasserstein barycenters.

While numerous algorithms have been used successfully, we are not aware of convergence rates for continuous QOT stated in the literature. (Sublinear rates could, however, be deduced from general optimization results.) Linear convergence is not obvious given the lack of strong convexity in~\eqref{eq:DualIntro} and the limited smoothness of the potentials. 
More generally, conventional wisdom is that quadratically regularized optimal transport ``works'' in practice but is difficult to analyze theoretically. The present work provides not only a linear convergence analysis, but also develops techniques that will be useful in other theoretical studies on regularized optimal transport. 

\paragraph{Contributions.} Specifically, we consider the gradient descent algorithm for the dual problem~\eqref{eq:DualIntro}.
With $\Gamma(f,g)$ denoting the objective function in~\eqref{eq:DualIntro}, the gradient descent (or ascent, to be precise) in $L^2(P)\times L^2(Q)$ with step size $\eta>0$ is 
\begin{equation}\label{GradientDescentIntro}
 \left(  \begin{array}{c}
     f_{n+1}\\
       g_{n+1}
       
\end{array} \right) = \left(  \begin{array}{c}
     f_{n}\\
       g_{n}
     
\end{array} \right)+ \eta\cdot  {\rm D}\Gamma\left(  \begin{array}{c}
     f_{n}\\
       g_{n}
     
\end{array} \right) 
\end{equation}
for $n\geq 0$, where the initial values $(f_0,g_0)$ are given and the explicit form of the gradient is 
\begin{equation}\label{gradientIntro}
{\rm D}\Gamma\left(  \begin{array}{c}
    f\\
    g
\end{array} \right)(x,y)
=  \left(  \begin{array}{c}
     1-\frac{1}{\eps}\int{  \left(f(x)+ g(\tilde y)- \frac{1}{2}\|x-\tilde  y\|^2\right)_+ } dQ(\tilde y)\\
      1-\frac{1}{\eps}\int{  \left(f(\tilde  x)+ g(y)- \frac{1}{2}\|\tilde x-y\|^2\right)_+ } dP(\tilde x)
\end{array} \right).
\end{equation} 
We observe that the iteration~\eqref{GradientDescentIntro} is fairly straightforward to implement, like the Sinkhorn iteration, but it is free from  exponentially large or small values. Indeed, even a naive implementation is stable for very small parameters $\eps$. Evaluating the integrals in~\eqref{gradientIntro} may require replacing~$P$ and~$Q$ by empirical samples, especially in high-dimensional problems. In that respect, it is useful to know that the quadratically regularized optimal transport problem has parametric sample complexity~\cite{GonzalezSanzDelBarrioNutz.25}---it does not suffer from the same curse of dimensionality as optimal transport. 

We provide a theoretical analysis in a general continuous setting with marginals that do not charge small sets and one marginal having connected support. The main result (\cref{Theorem:Explicit}) shows that for step size $\eta<\eps$, the iterates $(f_n,g_n)$ converge linearly in $L^2$ to the solution $(f_*,g_*)$ of the dual problem~\eqref{eq:DualIntro}; that is, there exist $\delta_*<1$ and $n_0\ge0$ such that
\begin{equation}\label{intro-linear-conv}
\| (f_n, g_n) - (f_*, g_*) \|_{L^2(P)\times L^2(Q)} \leq C\delta_*^n
\end{equation}
for all $n\geq n_0$. Our numerical experiments suggest that this bound accurately captures the behavior of the algorithm:  convergence is approximately geometric after a burn-in period. 
Our mathematical analysis centers on the linearization $\mathbb{L}$ of the gradient descent operator $\mathbb{I} + \eta {\rm D}\Gamma$ at the optimum $(f_*,g_*)$. Indeed, we show that the linear operator $\mathbb{L}_n$ mapping $(f_n,g_n)-(f_{*},g_{*})$ to $(f_{n+1},g_{n+1})-(f_{*},g_{*})$ converges in operator norm to $\mathbb{L}$.  The main mathematical contribution of this paper is a spectral analysis of~$\mathbb{L}$. It establishes that the spectrum of the self-adjoint operator $\mathbb{L}$ is contained in~$(-1,1)$ and hence that its operator norm satisfies $\|\mathbb{L}\|_{\mathrm{op}}<1$ (\cref{pr:contractionExplicit}). This norm then gives the eventual contraction constant $\delta_*$ in the main result~\eqref{intro-linear-conv}.

Our proof is tailored to the structure of regularized optimal transport. The gradient descent iteration could also be viewed as a forward--backward splitting method in Hilbert space (with the backward part equal to zero); then, monotone-operator results imply qualitative convergence of the iterates and an \(O(1/n)\) rate for the objective gap \cite{combettes2018monotone}. However, these general arguments do not provide a linear rate.

The basic intuition for our results is geometric: while the dual objective $\Gamma$ fails to be strongly convex due to the positive part function in~\eqref{eq:DualIntro}, it nevertheless exhibits curvature in a suitable neighborhood of the optimizer $(f_{*},g_{*})$. Specifically, the first-order condition~\eqref{SchrodingerIntro} implies that the argument of the positive part function at $(f,g)=(f_{*},g_{*})$ must be positive on a set that cannot be arbitrarily small---while this set may be sparse, it supports the optimal coupling, and that amounts to a lower bound. The spectral analysis of the linearization $\mathbb{L}$ at $(f_{*},g_{*})$ captures this curvature and formalizes the intuition analytically.

We mention that several follow-up works build on the analysis developed in the present paper. In \cite{GonzalezSanzDelBarrioNutz.25}, our analysis is adapted to treat the random operators that arise in the empirical version of QOT based on samples from the marginals. The main result is a central limit theorem for the empirical potentials towards the continuous potentials $(f_{*},g_{*})$. In \cite{GonzalezSanzNutzRiveros.26}, our analysis is refined to show a Polyak--Łojasiewicz (PL) inequality for QOT and better understand the constants that are not explicitly quantified in the present work. In \cite{GonzalezSanzNutz.26stability}, in turn, the curvature around the optimum is similarly exploited to derive local Lipschitz stability of the potentials when the marginals~$(P,Q)$ are perturbed in Wasserstein metric.

\paragraph{Organization.} \Cref{sec:main-results} details the setting, the gradient descent algorithm and the main result on its convergence. \Cref{section-preliminary-result} gathers preliminary results for the proof. \Cref{section:uniform-linearized} studies the linearized gradient descent operator $\mathbb{L}$ and forms the core of our analysis. \Cref{Section:linear-GD} completes the proof of the main result by showing the convergence of $\mathbb{L}_n$ to $\mathbb{L}$. \Cref{section:numerical-examples} concludes with numerical experiments.

\section{Problem statement and main result}\label{sec:main-results}

Let $P,Q$ be probability measures on $\R^d$. The following is a standing assumption throughout the paper.

\begin{assumption}[Marginals]\label{assumption}
    The measures $P,Q\in\mathcal{P}(\R^d)$ do not charge the boundary of any convex subset of $\R^d$. Their topological supports
    $$
      \Omega:=\spt P \qquad\text{and}\qquad \Omega':=\spt Q
    $$
   are compact, and $\Omega'$ is connected.
\end{assumption}

The quadratically regularized optimal transport (QOT) problem with regularization parameter $\eps>0$ is
\begin{equation}\label{qotMain}
  {\rm QOT}_\eps(P,Q):=  \inf_{\pi\in \Pi( P ,   Q )} \int \frac{1}{2}\|x-y\|^2d\pi(x,y) +\frac{\eps }{2}\left\| \frac{d \pi}{ d( P \otimes   Q )}\right\|^2_{L^{2}( P \otimes   Q )}
\end{equation}
with the convention that the last term is $+\infty$ if $\pi \not\ll P\otimes Q$. 
This problem has a unique solution $\pi_*\in \Pi(P,Q)$, and $\pi_*$ is characterized within $\Pi(P,Q)$ by having a density of the form 
\begin{equation}\label{eq:PiQOTrelationDual}
   \frac{d\pi_*}{d(P\otimes Q)}(x,y)= \frac{1}{\eps}\left(f_*(x)+ g_*(y)- \frac{1}{2}\|x-y\|^2\right)_+
\end{equation}
for a pair $(f_*,g_*)\in L^2(P)\times L^2(Q)$ called the potentials. If $(f_*,g_*)$ are potentials, then $(f_*-c,g_*+c)$ are also potentials for any $c\in\R$. To remove this ambiguity, we work with the subspace 
$$
  L_\oplus^2= \left\{(f,g)\in L^2(P)\times L^2(Q): \int fdP=\int gdQ\right\} = \{(c,-c): c\in \R\}^\perp \subset L^2(P)\times L^2(Q).
$$
As a consequence of the connectedness in \cref{assumption}, the potentials are unique in~$L_\oplus^2$ (see \cref{le:Schrodingersystem} below for all these assertions). The orthogonal projection onto $L_\oplus^2$ is 
\begin{align}
        \label{Projection-operator}
    \proj_\oplus :L^2(P)\times L^2(Q)&\to L_\oplus^2, \qquad 
   \left(  \begin{array}{c}
    f\\
    g
\end{array} \right) \mapsto \left(  \begin{array}{c}
    f+ \frac{1}{2} \left(\int g dQ-  \int fdP \right)\\[.2em]
    g-\frac{1}{2} \left(\int g dQ-  \int fdP \right)
\end{array} \right)
\end{align}
and $L_\oplus^2$ is naturally a Hilbert space with the induced inner product   
$$ \langle  (f,g) ,   (u,v)\rangle_{L^2_\oplus} = \langle  (f,g) ,   (u,v)\rangle_{L^2(P)\times L^2(Q)} = \langle f,u\rangle_{L^2(P)} + \langle g,v\rangle_{L^2(Q)}.
$$
The potentials are also characterized as the unique solution of the dual problem of~\eqref{qotMain}, 
\begin{align}\label{eq:dual}
    \sup_{(f,g)\in L_\oplus^2} \Gamma (f,g),
\end{align}
where the dual objective function is
$$ \Gamma (f,g)=  \int f(x) d  P (x)+\int g(y) d  Q (y)-\frac{1}{2\eps}\int{  \left(f(x)+ g(y)- \frac{1}{2}\|x-y\|^2\right)_+^{2} }d(  P \otimes   Q )(x,y).$$
The gradient of $\Gamma$ at $(f,g)\in L_\oplus^2$  is
\begin{equation}\label{eq:gradientGamma}
{\rm D}\Gamma\left(  \begin{array}{c}
    f\\
    g
\end{array} \right)
=  \left(  \begin{array}{c}
     1-\frac{1}{\eps}\int{  \left(f(\cdot)+ g(y)- \frac{1}{2}\|\cdot-y\|^2\right)_+ } dQ(y)\\
      1-\frac{1}{\eps}\int{  \left(f(x)+ g(\cdot)- \frac{1}{2}\|x-\cdot\|^2\right)_+ } dP(x)
     
\end{array} \right) \in L_\oplus^2.
\end{equation}
Thus, the gradient descent algorithm with step size $\eta>0$ is 
\begin{equation}\label{GradientDescent}
 \left(  \begin{array}{c}
     f_{n+1}\\
       g_{n+1}
       
\end{array} \right) = \left(  \begin{array}{c}
     f_{n}\\
       g_{n}
     
\end{array} \right)+ \eta\cdot  {\rm D}\Gamma\left(  \begin{array}{c}
     f_{n}\\
       g_{n}
     
\end{array} \right) 
\end{equation}
for $n\geq 0$, where the initial values $(f_0,g_0)\in L_\oplus^2$ are given inputs. Note that $(f_n,g_n)\in L_\oplus^2$ implies $(f_{n+1},g_{n+1})\in L_\oplus^2$.

\begin{assumption}\label{Assumptions:GD}
The initial values $f_0:\Omega\to\R$ and $g_0:\Omega'\to\R$ are Lipschitz continuous functions normalized such that $\int f_0dP=\int g_0dQ$. The step size $\eta$ satisfies $\eta \in (0, \varepsilon)$.
\end{assumption}

We can now state the main result.

\begin{theorem}[Linear convergence of gradient descent]\label{Theorem:Explicit}
Under \cref{Assumptions:GD}, there exist constants $\delta_* \in (0,1)$ and $n_0 \in \mathbb{N}$ such that the iterates $(f_n, g_n)$ of~\eqref{GradientDescent} satisfy
\[
\| (f_n, g_n) - (f_*, g_*) \|_{L^2_\oplus} \leq \delta_*^n \quad\text{for all $n \geq n_0$.}
\]
\end{theorem}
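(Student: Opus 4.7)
The plan is the one sketched in the introduction: linearize the one-step gradient descent map $T(f,g):=(f,g)+\eta\,\mathrm{D}\Gamma(f,g)$ at its fixed point $(f_*,g_*)$ and reduce the theorem to a spectral bound on the linearization. Since $\mathrm{D}\Gamma(f_*,g_*)=0$, the error $e_n:=(f_n,g_n)-(f_*,g_*)$ satisfies $e_{n+1}=T(f_n,g_n)-T(f_*,g_*)$, and I will write this as $e_{n+1}=\mathbb{L}_n\, e_n$ for a bounded self-adjoint operator $\mathbb{L}_n$ on $L^2_\oplus$ with $\|\mathbb{L}_n-\mathbb{L}\|_{\mathrm{op}}\to 0$. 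Once $\|\mathbb{L}\|_{\mathrm{op}}\leq\bar\delta<1$ is established, one has $\|\mathbb{L}_n\|_{\mathrm{op}}<\delta_*$ for $n\geq n_0$ and any fixed $\delta_*\in(\bar\delta,1)$, yielding $\|e_n\|_{L^2_\oplus}\leq C\delta_*^{\,n-n_0}$; after a further infinitesimal enlargement of $\delta_*$ this absorbs the constant $C$ and gives the bound $\delta_*^n$ claimed in the theorem.

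\textbf{Secant representation.} To make $\mathbb{L}_n$ explicit I would use the pointwise identity
\[
(a)_+ - (b)_+ \;=\; (a-b)\int_0^1 \mathbf{1}_{\{(1-t)b+ta>0\}}\,dt,
\]
applied with $a=\Psi_n(x,y):=f_n(x)+g_n(y)-\tfrac{1}{2}\|x-y\|^2$ and $b=\Psi_*(x,y)$. This expresses $\mathrm{D}\Gamma(f_n,g_n)-\mathrm{D}\Gamma(f_*,g_*)$ as a linear integral operator $H_n$ acting on $(f_n-f_*,g_n-g_*)$, with kernel $-\tfrac{1}{\eps}\int_0^1\mathbf{1}_{\{(1-t)\Psi_*+t\Psi_n>0\}}\,dt$. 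Setting $\mathbb{L}_n:=\mathbb{I}+\eta H_n$ yields the desired factorization, and the limiting operator $\mathbb{L}=\mathbb{I}+\eta H$ corresponds to the kernel $-\tfrac{1}{\eps}\mathbf{1}_{\{\Psi_*>0\}}$, i.e.\ to the indicator of $\spt \pi_*$.

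\textbf{Three building blocks.} The proof would then split into three pieces. \emph{(i) Convergence of iterates:} I would first show $(f_n,g_n)\to(f_*,g_*)$ and, more strongly, uniform convergence $\Psi_n\to\Psi_*$ on $\Omega\times\Omega'$. Standard descent-lemma arguments for the concave $\Gamma$ (whose gradient is $\tfrac{1}{\eps}$-Lipschitz, so that $\eta\leq\eps$ forces monotone ascent with summable gradient norms) identify $(f_*,g_*)$ as the only candidate limit, and Lipschitz bounds propagated along the iteration provide the compactness needed to upgrade to uniform convergence. The edge case $\eta=\eps$ is handled via the hypothesis $\spt\pi_*\neq\Omega\times\Omega'$. \emph{(ii) Operator convergence $\mathbb{L}_n\to\mathbb{L}$:} Uniform convergence $\Psi_n\to\Psi_*$ gives pointwise convergence of the kernel indicators off the level set $\{\Psi_*=0\}$; because each $x$-slice of this level set is the boundary of a convex set in $\R^d$, the non-charging hypothesis on $Q$ forces $(P\otimes Q)(\{\Psi_*=0\})=0$, and dominated convergence together with the uniform bound $\|H_n\|_{\mathrm{op}}\leq 1/\eps$ then yields $\|\mathbb{L}_n-\mathbb{L}\|_{\mathrm{op}}\to 0$. \emph{(iii) Strict contraction of $\mathbb{L}$:} Because $-\tfrac{1}{\eps}\mathbb{I}\leq H\leq 0$, the spectrum of $\mathbb{L}$ lies in $[1-\eta/\eps,\,1]$, and $\eta<\eps$ (or the edge-case hypothesis) keeps it away from $-1$; what remains is a uniform spectral gap that separates it from $1$, which is the content of \cref{section:uniform-linearized}.

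\textbf{Main obstacle.} The genuinely hard step is (iii): establishing the spectral gap
\[
\sup\bigl\{\langle H(u,v),(u,v)\rangle_{L^2_\oplus}:\|(u,v)\|_{L^2_\oplus}=1\bigr\}<0,
\]
without any strong concavity of $\Gamma$. Up to sign, the quadratic form $-\langle H(u,v),(u,v)\rangle$ equals the $L^2(\pi_*)$-norm of $u(x)+v(y)$, so it vanishes exactly on pairs with $u(x)+v(y)\equiv 0$ on $\spt\pi_*$; the connectedness of $\spt Q$ and the non-charging assumption should rule out any nontrivial such pair in $L^2_\oplus$, but turning this qualitative rigidity into a quantitative gap uniform over the unit sphere is the crux of the analysis. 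A secondary but nontrivial difficulty appears in step (ii), where the lack of $C^2$-smoothness of $\Gamma$ means the secant operators $H_n$ replace a Hessian that does not exist, and one must control their convergence carefully in the neighborhood of the contact set $\{\Psi_*=0\}$.
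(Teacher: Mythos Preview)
Your overall architecture---the secant factorization $e_{n+1}=\mathbb{L}_n e_n$, operator-norm convergence $\mathbb{L}_n\to\mathbb{L}$, and the spectral analysis of $\mathbb{L}$---matches the paper's proof exactly, and you correctly identify the gap at the top of the spectrum as the substantive difficulty (this is precisely what \cref{section:uniform-linearized} addresses via a compactness argument for the approximate eigenvectors).

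There is, however, a concrete error in your treatment of the \emph{bottom} of the spectrum. The inequality $-\tfrac{1}{\eps}\mathbb{I}\leq H$ is false: the quadratic form is
\[
-\langle H(u,v),(u,v)\rangle=\tfrac{1}{\eps}\int_{\mathcal{E}}(u(x)+v(y))^2\,d(P\otimes Q),
\]
and taking $(u,v)=(1,1)\in L^2_\oplus$ gives the value $\tfrac{2}{\eps}(P\otimes Q)(\mathcal{E})$, which exceeds $\tfrac{1}{\eps}\|(u,v)\|^2=\tfrac{2}{\eps}$ times $\tfrac12$ whenever $(P\otimes Q)(\mathcal{E})>\tfrac12$. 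The sharp bound is $H\geq -\tfrac{2}{\eps}\mathbb{I}$, so the a priori spectral window for $\mathbb{L}$ is $[1-2\eta/\eps,\,1]$, not $[1-\eta/\eps,\,1]$. For $\eta<\eps$ your conclusion survives (since $1-2\eta/\eps>-1$), but for the edge case $\eta=\eps$ in \cref{Assumptions:GD}(ii) the a priori lower bound is exactly $-1$, and the hypothesis $\spt\pi_*\neq\Omega\times\Omega'$ does not help through a crude operator inequality---one needs an honest argument to exclude $-1$ as an eigenvalue. The paper does this in the ``Case $\alpha=-1$'' of the proof of \cref{pr:contractionExplicit}, deriving $\int(f\oplus g)^2\,d(P\otimes Q)\leq (\eta/\eps)^2(P\otimes Q)(\mathcal{E})\int(f\oplus g)^2\,d(P\otimes Q)$ from the eigenvalue equation, which is where $(P\otimes Q)(\mathcal{E})<1$ enters.

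The same factor of $2$ undermines your step~(i): the Lipschitz constant of $\mathrm{D}\Gamma$ is $2/\eps$ (see \cref{rk:ComparisonWith2overL}), so at $\eta=\eps$ the descent lemma yields only $\Gamma(f_{n+1})\geq\Gamma(f_n)$ with no summability of gradients. The paper instead bypasses the descent lemma entirely, proving uniform boundedness (\cref{Lemma:representationExplicit2}) and equi-Lipschitz propagation (\cref{lemma:modulus-cont}) directly via the one-sided inequality $(t)_+-(s)_+\leq\mathbf{1}_{\{t\geq 0\}}(t-s)$, then concluding by Arzel\`a--Ascoli; this works uniformly for all $\eta\in(0,\eps]$. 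A minor side remark: $-\langle H(u,v),(u,v)\rangle$ is not the $L^2(\pi_*)$ norm of $u\oplus v$ (the weight is $\mathbf{1}_{\mathcal{E}}$, not the density $(\Psi_*)_+$), though the zero sets coincide so your qualitative description of the kernel is correct.
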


The proof, which occupies the rest of the paper, has the following structure. We observe that the iterates $(f_n,g_n)$ satisfy 
\begin{align*} 
    \left( \begin{array}{c}
  f_{n+1}-f_{*}    \\
   g_{n+1}-g_{*}  
\end{array}\right)= \mathbb{L}_n\left( \begin{array}{c}
  f_{n}-f_{*}    \\
   g_{n}-g_{*}  
\end{array}\right)
\end{align*}
for an operator $\mathbb{L}_n$ (see \cref{Lemma:representationExplicit}) which converges in operator norm to a limit $\mathbb{L}$ as $n\to\infty$ (\cref{pr:convergenceOperatorsExplicit}). The limiting operator~$\mathbb{L}$ is the linearization of the gradient descent operator at the optimum $(f_*,g_*)$. The key step is to show that the operator norm of $\mathbb{L}$ is strictly smaller than one (\cref{pr:contractionExplicit}). It then follows that $\mathbb{L}_n$ is a strict contraction for $n\geq n_0$, which is the assertion of \cref{Theorem:Explicit}.

\begin{remark}\label{rk:ComparisonWith2overL}
    \Cref{Assumptions:GD} allows for step sizes $\eta < \eps$. A classical result states that for a strongly convex function with $L$-Lipschitz gradient, the gradient descent algorithm converges linearly for any step size $\eta<2/L$ (see \cite[Theorem~2.1.15]{Nesterov.04}). In our problem, the gradient ${\rm D}\Gamma$ is Lipschitz in $L^2_\oplus$ with Lipschitz constant bounded by $2/\eps$ (cf.\ \cref{le:lipschitzgradient}). In that sense, \cref{Assumptions:GD} is in line with the classical result. Our experiments in \cref{section:numerical-examples} suggest that convergence can break down for $\eta \geq \eps$.
\end{remark}

\begin{remark}
    We focus on the transport cost $c(x,y)=\frac{1}{2}\|x-y\|^2$ which is the most important example in practice. While the continuity of this function is used throughout, the specific form is used only to infer the regularity properties in \cref{lemma:QOT-reg}~(i)  which, in turn, are used to argue that certain sets occurring in the proof of \cref{pr:convergenceOperatorsExplicit} are negligible. While we do not know how to rigorously guarantee the latter in general, it seems plausible that \cref{Theorem:Explicit} could extend to more general transport costs. The analysis of the linearized gradient descent operator in \cref{section:uniform-linearized} directly extends to general continuous costs~$c(x,y)$.
\end{remark}

\section{Preliminaries}\label{section-preliminary-result}

Let $\mathcal{C}(\Omega)$ denote the space of continuous functions $f: \Omega\to\R$. For $f\in \mathcal{C}(\Omega)$ and $g\in \mathcal{C}(\Omega')$, we denote $(f\oplus g)(x,y):=f(x)+g(y)$. Consider the quotient space
\[
\Banach := \left( \mathcal{C}(\Omega) \times \mathcal{C}(\Omega') \right) / \sim_{\oplus}
\]
where $(f, g) \sim_{\oplus} (u, v)$ if and only if $f \oplus g = u \oplus v$, and endow $\Banach$ with the norm
\[
\|(u, v)\|_{\mathcal{C}_\oplus} := \inf_{a \in \mathbb{R}} \left\{ \|u + a\|_\infty + \|v - a\|_\infty \right\}.
\]

Next, we detail some properties of the potentials $(f_*,g_*)$ that will be used throughout our convergence analysis. The following system~\eqref{Schrodinger} can be understood as the first-order condition of optimality for the dual problem~\eqref{eq:DualIntro}.

\begin{lemma}\label{le:Schrodingersystem}
There exists a unique pair $(f_*, g_*) \in \Banach$ solving the system
    \begin{equation}\label{Schrodinger}
        \begin{cases}
            \varepsilon = \displaystyle\int \left( f_*(x) + g_*(y) - \tfrac{1}{2} \|x - y\|^2 \right)_+ \, dP(x) & \text{for all } y \in \Omega', \\
            \varepsilon = \displaystyle\int \left( f_*(x) + g_*(y) - \tfrac{1}{2} \|x - y\|^2 \right)_+ \, dQ(y) & \text{for all } x \in \Omega.
        \end{cases}
    \end{equation}
    The pair $(f_*, g_*)$ is also characterized as the unique maximizer of the dual problem~\eqref{eq:DualIntro} and by the relation~\eqref{eq:PiQOTrelationDual} with the primal solution $\pi_*$.
\end{lemma}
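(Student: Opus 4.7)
My plan is to derive the system~\eqref{Schrodinger} from duality, upgrade the resulting $L^2$-solutions to continuous representatives in $\Banach$, and finally use the connectedness of $\Omega'$ to obtain uniqueness. The three claimed characterizations are then linked through the primal--dual relation~\eqref{eq:PiQOTrelationDual}.

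First, I would establish existence and uniqueness of the primal minimizer $\pi_* \in \Pi(P,Q)$ of~\eqref{qotMain}: the feasible set is weakly compact, the quadratic density penalty is strictly convex and lower semicontinuous, hence the problem admits a unique minimizer with density $\rho_* \in L^2(P \otimes Q)$. Fenchel--Rockafellar duality applied to~\eqref{qotMain}, with the marginal constraints dualized by pairs $(f,g) \in L^2(P) \times L^2(Q)$, then yields a dual maximizer together with the primal--dual relation $\rho_* = \frac{1}{\eps}(f_* \oplus g_* - \tfrac12\|\cdot-\cdot\|^2)_+$, which is~\eqref{eq:PiQOTrelationDual}. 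Passing through the projection~\eqref{Projection-operator} lands the maximizer in $L^2_\oplus$ without altering this relation. Since $\Gamma$ is concave and G\^ateaux-differentiable with gradient given by~\eqref{eq:gradientGamma}, the first-order condition ${\rm D}\Gamma(f_*, g_*) = 0$ reads as the system~\eqref{Schrodinger} holding $P$-a.s.\ in $x$ and $Q$-a.s.\ in $y$; equivalently, the density $\rho_*$ integrates to one against both marginals.

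Second, to produce continuous representatives, I would bootstrap on the system itself. For fixed $x \in \Omega$ the map $t \mapsto \int (t + g_*(y) - \tfrac12\|x-y\|^2)_+ \, dQ(y)$ is continuous, strictly increasing once positive, and ranges over $[0, \infty)$; hence it attains the value $\eps$ at a unique $t = \hat f_*(x)$. Uniform continuity of $(x,y) \mapsto \tfrac12\|x-y\|^2$ on the compact set $\Omega \times \Omega'$ together with dominated convergence yields continuity of $\hat f_*$ on $\Omega$, and since the original $f_*$ satisfies the same equation $P$-a.s., we may replace $f_*$ by $\hat f_*$ without altering its $L^2$-class. The symmetric construction produces a continuous version of $g_*$, and~\eqref{Schrodinger} then upgrades from a.s.\ to pointwise on $\Omega$ and $\Omega'$ by continuity combined with $\Omega = \spt P$ and $\Omega' = \spt Q$.

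The main obstacle is uniqueness in~$\Banach$. If $(f_1, g_1), (f_2, g_2) \in \Banach$ both solve~\eqref{Schrodinger}, each induces via~\eqref{eq:PiQOTrelationDual} a coupling in $\Pi(P, Q)$ minimizing~\eqref{qotMain}, so both must equal the unique primal minimizer $\pi_*$; consequently $(f_1 \oplus g_1 - \tfrac12\|\cdot-\cdot\|^2)_+ = (f_2 \oplus g_2 - \tfrac12\|\cdot-\cdot\|^2)_+$ on $\Omega \times \Omega'$. On the open set $F := \{f_* \oplus g_* > \tfrac12\|\cdot-\cdot\|^2\}$ this collapses to $\phi(x) + \psi(y) = 0$ for $(x,y) \in F$, where $\phi := f_1 - f_2$ and $\psi := g_1 - g_2$. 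The equations~\eqref{Schrodinger} force the fibers $F_x = \{y : (x,y) \in F\}$ and $F^y = \{x : (x,y) \in F\}$ to be nonempty for every $x \in \Omega$ and $y \in \Omega'$, since the corresponding integrals equal $\eps > 0$. Fixing $x_0 \in \Omega$, we then have $\psi \equiv -\phi(x_0)$ on the open set $F_{x_0}$; moreover, for any $y_1 \in \Omega'$ at which $\psi(y_1) = -\phi(x_0)$, picking any $x_1 \in F^{y_1}$ gives $\phi(x_1) = -\psi(y_1) = \phi(x_0)$, hence $\psi \equiv -\phi(x_0)$ on the open neighborhood $F_{x_1}$ of $y_1$. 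The set $\{y \in \Omega' : \psi(y) = -\phi(x_0)\}$ is therefore nonempty, open, and closed in $\Omega'$, so connectedness forces it to equal $\Omega'$. Thus $\psi$ is constant, and then $\phi$ is constant with the opposite sign, so $(f_1, g_1) \sim_\oplus (f_2, g_2)$ in $\Banach$. The equivalence with being the dual maximizer and with~\eqref{eq:PiQOTrelationDual} follows from concavity of $\Gamma$ together with the duality established in the first step.
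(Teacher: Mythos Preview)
Your proposal is correct and follows essentially the same approach as the paper. The paper outsources existence and the equivalences to \cite{Nutz.2024} while you sketch them via Fenchel--Rockafellar duality and a bootstrap to continuous representatives; for the uniqueness part (the only piece the paper actually proves), your clopen-set argument on $\Omega'$ is the same connectedness argument as the paper's, which instead phrases it as showing $g_*-g'_*$ is locally constant by fixing $y$, finding $x_0$ in the positivity fiber, and extending to a neighborhood $U_y$.
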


\begin{proof}
    Existence of a solution $(f_*, g_*) \in \Banach$ of~\eqref{Schrodinger}, as well as the equivalence of~\eqref{Schrodinger} with solving the dual problem and with~\eqref{eq:PiQOTrelationDual}, are shown for instance in \cite{Nutz.2024}. Uniqueness is also shown in~\cite{Nutz.2024}, but only under additional conditions on $(P,Q)$. Next, we provide a more general proof.
    
    Let $(f_*,g_*)$ and $(f'_*,g'_*)$ be continuous solutions of \eqref{Schrodinger}. Fix $y\in\Omega'$, then \eqref{Schrodinger} implies that 
    $$P\left\{ x \in \Omega : f_*(x) + g_*(y) - \tfrac{1}{2}\|x - y\|^2 > 0 \right\}>0;$$
    in particular, the latter set contains an element $x_0$. Continuity of $g_*$ implies that 
    \begin{align}\label{eq:posNbhd}
    f_*(x_0) + g_*(\tilde y) - \tfrac{1}{2}\|x_0 - \tilde y\|^2 > 0 \quad \mbox{for all $\tilde y$ in a neighborhood $U_y$ of $y$ in $\Omega'$.}
    \end{align}
    The relation \eqref{eq:PiQOTrelationDual} shows that $\frac{d\pi_*}{d(P\otimes Q)}$ admits a continuous version, and it is a general fact that if the density of a measure admits a continuous version, then that version is uniquely determined at every point of the support. Since \eqref{eq:PiQOTrelationDual} holds both with $(f_*,g_*)$ and $(f'_*,g'_*)$, and both are continuous, we conclude from~\eqref{eq:posNbhd} that 
    \begin{align*}
      f_*(x_0) + g_*(\tilde  y) - \tfrac{1}{2}\|x_0 - \tilde  y\|^2 
      =f'_*(x_0) + g'_*(\tilde  y) - \tfrac{1}{2}\|x_0 - \tilde  y\|^2 \quad \mbox{for all~} \tilde y \in U_y.
    \end{align*}
    Hence, $g_*(\tilde y)-g'_*(\tilde y)=f'_*(x_0) - f_*(x_0)=:c$ for all $\tilde y \in U_y$, showing that $g_*-g'_*$ is locally constant in $\Omega'$. As $\Omega'$ is connected, it follows that $g_*-g'_*=c$ is constant in $\Omega'$. It now follows from~\eqref{Schrodinger} that $f_*-f'_*=-c$ (see, e.g., \cite[Lemma~2.4]{Nutz.2024}) and hence $(f_*,g_*)=(f'_*,g'_*)$ in~$\Banach$.
\end{proof}

Next, we detail two properties of the set
\begin{align}\label{eq:defEset}
  \mathcal{E}:=\left\{ (x,y) \in \Omega\times\Omega' : f_*(x) + g_*(y) - \tfrac{1}{2}\|x - y\|^2 \geq 0 \right\}.
\end{align}
We denote its sections by
\begin{align}
\mathcal{S}_x &:= \left\{ y \in \Omega' : f_*(x) + g_*(y) - \tfrac{1}{2}\|x - y\|^2 \geq 0 \right\}, \quad x \in \Omega, \label{eq:xSection}\\
\mathcal{T}_y &:= \left\{ x \in \Omega : f_*(x) + g_*(y) - \tfrac{1}{2}\|x - y\|^2 \geq 0 \right\}, \quad y \in \Omega'. \label{eq:ySection}
\end{align}

\begin{lemma}\label{lemma:QOT-reg} 
\begin{enumerate}
    \item For any $x \in \Omega$, there is a convex set $C_x\subset\R^d$ with nonempty interior such that 
    \begin{align*}
        \mathcal{S}_x = \left\{ y \in \Omega' : f_*(x) + g_*(y) - \tfrac{1}{2}\|x - y\|^2 \geq 0 \right\} &= C_x \cap \Omega', \\
        \mathcal{N}_x := \left\{ y \in \Omega' : f_*(x) + g_*(y) - \tfrac{1}{2}\|x - y\|^2 = 0 \right\} &= \partial C_x \cap \Omega'.
    \end{align*}
    In particular, $\mathcal{N}_x$ is $Q$-negligible. The symmetric statements hold for~$P$.
    \item There exists a constant $\lambda > 0$ such that $Q(\mathcal{S}_x) \geq \lambda$ and $P(\mathcal{T}_y) \geq \lambda$ for all $(x,y) \in \Omega\times\Omega'$.
\end{enumerate}
\end{lemma}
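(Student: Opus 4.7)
For part (i), observe the algebraic identity
$$
f_*(x) + g_*(y) - \tfrac{1}{2}\|x-y\|^2 \;=\; \phi(x) + \langle x, y\rangle + u(y)
$$
with $\phi(x) := f_*(x) - \tfrac{1}{2}\|x\|^2$ and $u(y) := g_*(y) - \tfrac{1}{2}\|y\|^2$. The strategy is to extend $u$ from $\Omega'$ to a concave function $\tilde u$ on $\R^d$; once this is done, $h_x(y) := \phi(x) + \langle x, y\rangle + \tilde u(y)$ is concave and continuous on $\R^d$, and one can simply take $C_x := \{y\in\R^d : h_x(y)\geq 0\}$, a closed convex set. The equality $\mathcal{S}_x = C_x\cap \Omega'$ is then automatic. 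Nonemptiness of the interior of $C_x$ follows because $\int_{\Omega'}(h_x)_+\,dQ=\varepsilon>0$ forces $h_x$ to be strictly positive at some $y^*\in\Omega'$, hence on an open neighborhood of $y^*$. The identity $\partial C_x=\{h_x=0\}$, and so $\mathcal{N}_x=\partial C_x\cap \Omega'$, is a standard concavity argument: if $h_x(y)=0$, extending the ray from $y^*$ through $y$ beyond $y$ produces points where concavity forces $h_x<0$, so $y$ is a limit of points outside $C_x$. The $Q$-negligibility of $\mathcal{N}_x$ is then immediate from \cref{assumption}.

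To construct the concave extension, I would introduce
$$
F(y,t) \;:=\; \int_\Omega \bigl(\phi(x) + \langle x, y\rangle + t\bigr)_+\,dP(x),
$$
which is continuous and jointly convex in $(y,t)$ as an integral of the convex functions $(y,t)\mapsto(\phi(x)+\langle x,y\rangle+t)_+$. Then set
$$
\tilde u(y) \;:=\; \sup\bigl\{t\in\R : F(y,t)\leq \varepsilon\bigr\}.
$$
The sublevel set $\{F\leq\varepsilon\}$ is closed and convex, so its fibrewise supremum $\tilde u$ is concave; it is finite everywhere because $F(y,t)\to 0$ as $t\to -\infty$ and $F(y,t)\to\infty$ as $t\to\infty$. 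As a finite concave function on $\R^d$, $\tilde u$ is automatically continuous.

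The main obstacle I anticipate is to verify that $\tilde u=u$ on $\Omega'$. The inequality $u\leq \tilde u$ is immediate because $F(y,u(y))=\varepsilon$ by \eqref{Schrodinger}. For the reverse, one needs the right derivative of $t\mapsto F(y,t)$ at $t=u(y)$ to be strictly positive, ruling out a plateau of $F$ above $u(y)$. Since $F(y,u(y))=\varepsilon>0$, the set $A:=\{x\in\spt P : \phi(x)+\langle x,y\rangle+u(y)>0\}$ satisfies $P(A)>0$; then for $\delta>0$ one has $F(y,u(y)+\delta)-F(y,u(y))\geq \delta\,P(A)$, which forces $F(y,t)>\varepsilon$ for all $t>u(y)$ and hence $\tilde u(y)\leq u(y)$.

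Part (ii) requires no concavity and I would dispatch it by a straightforward boundedness argument. Since $(f_*,g_*)$ admit continuous representatives on the compact sets $\Omega$ and $\Omega'$ (\cref{le:Schrodingersystem}), the integrand $(f_*(x)+g_*(y)-\tfrac{1}{2}\|x-y\|^2)_+$ is bounded by some $M<\infty$ on $\Omega\times \Omega'$. Then \eqref{Schrodinger} yields $\varepsilon\leq M\cdot P(\mathcal{T}_y)$ and $\varepsilon\leq M\cdot Q(\mathcal{S}_x)$, so $\lambda:=\varepsilon/M$ works uniformly in $(x,y)$.
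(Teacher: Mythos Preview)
Your proof is correct and rests on the same key insight as the paper: the function $y\mapsto f_*(x)+g_*(y)-\tfrac12\|x-y\|^2$ is concave once $g_*(y)-\tfrac12\|y\|^2$ is extended concavely to $\R^d$, so that $C_x$ is just its superlevel set and the boundary/negligibility claims follow. Part~(ii) is identical to the paper's argument, with your $M$ playing the role of their constant $C:=\sup_{x,y}\{f_*(x)+g_*(y)-\tfrac12\|x-y\|^2\}$.

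The difference lies in how the concave extension is produced. The paper first asserts that $(f_*,g_*)$ extend to continuous functions on $\R^d$ satisfying the Schr\"odinger system globally, and then proves concavity of $G(y)=g_*(y)-\tfrac12\|y\|^2$ by a Jensen-type comparison of the equations at $y$, $y'$, and their convex combination. Your construction folds both steps into one: you define $\tilde u$ as the upper boundary of the convex sublevel set $\{F\le\varepsilon\}$, which yields concavity for free, and then verify $\tilde u=u$ on $\Omega'$ via strict monotonicity of $t\mapsto F(y,t)$ at $t=u(y)$. This is a bit more self-contained, since the paper's extension claim is stated rather than argued; conversely, the paper's version is symmetric and simultaneously gives concavity of $f_*(x)-\tfrac12\|x\|^2$ (not needed here). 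At bottom both proofs exploit the same mechanism---the potential is implicitly defined by a level set of a convex integral functional, hence concave.
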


\begin{proof}
  (i) This is due to a concavity property that was previously used in~\cite{GonzalezSanzNutz2024.Scalar,WieselXu.24}; we detail the proof for the sake of completeness. One first observes that $f_*, g_*$ of~\eqref{Schrodinger} can be extended to continuous functions on $\R^d$ such that 
    \begin{equation}\label{SchrodingerRd}
        \begin{cases}
            \varepsilon = \displaystyle\int \left( f_*(x) + g_*(y) - \tfrac{1}{2} \|x - y\|^2 \right)_+ \, dP(x) & \text{for all } y \in \R^d, \\
            \varepsilon = \displaystyle\int \left( f_*(x) + g_*(y) - \tfrac{1}{2} \|x - y\|^2 \right)_+ \, dQ(y) & \text{for all } x \in \R^d.
        \end{cases}
    \end{equation}
    Let $f_*,g_*$ satisfy~\eqref{SchrodingerRd}. Write $F(x)=f_*(x)-\|x\|^2/2$ and $G(y)=g_*(y)-\|y\|^2/2$. We show that $F:\R^d\to\R$ is concave.  Indeed, let $x,x'\in\R^d$ and $\rho\in[0,1]$. Convexity of $t\mapsto (t)_+$ and using~\eqref{SchrodingerRd} at both~$x$ and~$x'$ yield
  \begin{align*}
    &\int \big[\rho F(x) + (1-\rho) F(x') + G(y) + \langle \rho x + (1-\rho)x',y \rangle \big]_+ dQ(y) \\
    &\leq \rho \int \big[ F(x) + G(y) + \langle x,y \rangle \big]_+ dQ(y) + 
    (1-\rho) \int \big[ F(x') + G(y) + \langle x',y \rangle \big]_+ dQ(y) 
    =\eps.
  \end{align*}
  On the other hand, \eqref{SchrodingerRd} at~$x'':=\rho x + (1-\rho)x'$ yields
  \begin{align*}
    \int \big[ F(\rho x + (1-\rho)x') + G(y) + \langle \rho x + (1-\rho)x',y \rangle \big]_+ dQ(y) 
    =\eps.
  \end{align*}
  Together, it follows that $\rho F(x) + (1-\rho) F(x') \leq F(\rho x + (1-\rho)x')$, as claimed.
  
  Analogously, $G$ is concave. In particular, for any $x \in \Omega$, the function
\[
y \mapsto f_*(x) + g_*(y) - \tfrac{1}{2}\|x - y\|^2
\]
is concave, which implies that its super-level set $\hat{\mathcal{S}}_x:=\{y\in\R^d: f_*(x) + g_*(y) - \tfrac{1}{2}\|x - y\|^2\geq0\}$ is convex. Moreover, \eqref{SchrodingerRd} implies that the open set $\{y\in\R^d: f_*(x) + g_*(y) - \tfrac{1}{2}\|x - y\|^2>0\}$ has positive $Q$-measure and in particular is nonempty. As a consequence, $\hat{\mathcal{S}}_x$ is a convex set with nonempty interior and its boundary in $\R^d$ is the zero-level set $\{y\in\R^d: f_*(x) + g_*(y) - \tfrac{1}{2}\|x - y\|^2=0\}$. Since the boundary of a convex set is $Q$-negligible by \cref{assumption}, the proof is complete.

(ii) This follows from an argument given in the proof of \cite[Proposition~5.1]{BayraktarEckstein.2025.BJ} for a class of divergences; however, for the present case, we can also give a straightforward proof: Set
$$
  C := \sup_{x\in\Omega, \, y\in\Omega'}  f_*(x) + g_*(y) - \tfrac{1}{2} \|x - y\|^2.
$$
For any $x\in\Omega$, \eqref{Schrodinger} yields
$
  \varepsilon =  \int \left( f_*(x) + g_*(y) - \tfrac{1}{2} \|x - y\|^2 \right)_+ \, dQ(y) \leq Q(\mathcal{S}_x)C,
$
and now the claim $Q(\mathcal{S}_x) \geq \lambda$ follows with $\lambda := \varepsilon/C$. The proof of $P(\mathcal{T}_y) \geq \lambda$ is analogous.
\end{proof}

\section{Contractivity of linearized gradient descent}  \label{section:uniform-linearized}
In this section, we study the (formal) linearization of the gradient descent operator 
\begin{equation}\label{eq:gradientDescentOperator}
 \left(  \begin{array}{c}
     f\\
       g
\end{array} \right) \mapsto \left(  \begin{array}{c}
     f\\
       g
\end{array} \right)+ \eta\cdot  {\rm D}\Gamma\left(  \begin{array}{c}
     f\\
       g
\end{array} \right) 
\end{equation}
at the dual optimum $(f_*,g_*)$; namely, the operator $\mathbb{L}:L^2_\oplus\to L^2_\oplus$, 
\begin{align}\label{eq:defL}
\mathbb{L} \begin{pmatrix}
    f \\
    g
\end{pmatrix}
= \proj_\oplus \begin{pmatrix}
    f \left(1 - \frac{\eta}{\varepsilon} Q(\mathcal{S}_{(\cdot)}) \right) - \frac{\eta}{\varepsilon} \int_{\mathcal{S}_{(\cdot)}} g \, dQ \\
    g \left(1 - \frac{\eta}{\varepsilon} P(\mathcal{T}_{(\cdot)}) \right) - \frac{\eta}{\varepsilon} \int_{\mathcal{T}_{(\cdot)}} f \, dP
\end{pmatrix},
\end{align}
where $x\mapsto\mathcal{S}_{x}$ and $y\mapsto \mathcal{T}_{y}$ were defined in \cref{eq:xSection,eq:ySection}.\footnote{The expression in~\eqref{eq:defL} serves as the definition of~$\mathbb{L}$. While it is indeed the Gâteaux derivative, Fr\'echet differentiability of~\eqref{eq:gradientDescentOperator} as an operator $L^2_\oplus\to L^2_\oplus$ need not hold.} 
We will show that \( \mathbb{L} \) is self-adjoint on the Hilbert space \( (L^2_\oplus, \langle \cdot, \cdot \rangle_{L^2_\oplus}) \), so that its operator norm can be computed via
\[
\|\mathbb{L}\|_{\mathrm{op}} = \sup_{\|(f,g)\|_{L^2_\oplus} \leq 1} \left| \left\langle \mathbb{L}(f,g), (f,g) \right\rangle \right|.
\]

The main result of this section is the following.

\begin{proposition}\label{pr:contractionExplicit}
    Under \cref{Assumptions:GD}, the operator $\mathbb{L}:L^2_\oplus\to L^2_\oplus$ is a strict contraction; i.e., $ \|\mathbb{L}\|_{\mathrm{op}}<1$.
\end{proposition}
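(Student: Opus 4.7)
My plan is to recast $\mathbb{L}$ as a spectral perturbation of the identity on $L^2_\oplus$ and then to control its extremal spectral values through a compact-plus-coercive decomposition together with an injectivity argument based on connectedness.

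\textbf{Reformulation.} I would first introduce the bounded operator $T\colon L^2_\oplus \to L^2(P \otimes Q)$ given by $T(f,g) := (f \oplus g)\,\mathbb{1}_{\mathcal{E}}$, with $\mathcal{E}$ as in~\eqref{eq:defEset}, and verify by direct computation that $\mathbb{L} = \mathbb{I} - (\eta/\varepsilon)\,\mathbb{A}$ on $L^2_\oplus$, where $\mathbb{A} := T^*T$ is self-adjoint and positive semi-definite (the projection $\proj_\oplus$ in~\eqref{eq:defL} is automatic because $\mathbb{A}$ already preserves $L^2_\oplus$). This gives the quadratic-form identity
\[
\langle \mathbb{L}(f,g),(f,g) \rangle_{L^2_\oplus} = \|(f,g)\|_{L^2_\oplus}^2 - \tfrac{\eta}{\varepsilon}\int(f+g)^2\,\mathbb{1}_{\mathcal{E}}\,d(P\otimes Q),
\]
reducing $\|\mathbb{L}\|_{\mathrm{op}} < 1$ to uniform bounds $0 < c_* \le \|T(f,g)\|^2 \le C_* < 2\varepsilon/\eta$ on the unit sphere of $L^2_\oplus$.

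\textbf{Upper bound on $\|T(f,g)\|^2$.} The $L^2_\oplus$ constraint $\int f\,dP = \int g\,dQ =: \alpha$ yields $\int(f+g)^2\,d(P\otimes Q) = \|(f,g)\|^2 + 2\alpha^2 \le 2\|(f,g)\|^2$ by Cauchy-Schwarz, so $\|T(f,g)\|^2 \le 2$; this already closes case (i) of \cref{Assumptions:GD}. For case (ii) I would argue by contradiction: if $(f_n, g_n)$ on the unit sphere of $L^2_\oplus$ satisfied $\|T(f_n, g_n)\|^2 \to 2$, saturation of the Cauchy-Schwarz slack $\|f_n\|^2 - \alpha_n^2 = \|f_n - \alpha_n\|_{L^2(P)}^2$ would force, along a subsequence, $\alpha_n \to \pm 1/\sqrt 2$ and $f_n, g_n$ to converge strongly in $L^2$ to this common constant, whence $\|T(f_n, g_n)\|^2 \to 2(P\otimes Q)(\mathcal{E})$. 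But the hypothesis $\spt \pi_* \neq \Omega\times\Omega'$ makes the open set $\mathcal{E}^c = \{f_*+g_*-\|\cdot-\cdot\|^2/2 < 0\} \cap (\Omega\times\Omega')$ nonempty, hence of positive $P\otimes Q$-measure (as $\Omega\times\Omega' = \spt(P\otimes Q)$), so $(P\otimes Q)(\mathcal{E}) < 1$, contradicting the limit.

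\textbf{Lower bound via compact perturbation.} For the coercivity estimate, I would decompose $\mathbb{A} = \mathbb{M} + \mathbb{K}$ on $L^2(P)\oplus L^2(Q)$, with $\mathbb{M}(f,g) := (f\,Q(\mathcal{S}_\cdot),\, g\,P(\mathcal{T}_\cdot))$ the multiplication by the section mass (bounded below by $\lambda\mathbb{I}$ by \cref{lemma:QOT-reg}(ii)) and $\mathbb{K}(f,g) := (\int g\,\mathbb{1}_{\mathcal{S}_\cdot}\,dQ,\, \int f\,\mathbb{1}_{\mathcal{T}_\cdot}\,dP)$ an integral operator with Hilbert-Schmidt kernel $\mathbb{1}_{\mathcal{E}} \in L^2(P\otimes Q)$, hence compact. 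Weyl's theorem then yields $\sigma_{\mathrm{ess}}(\mathbb{A}) = \sigma_{\mathrm{ess}}(\mathbb{M}) \subset [\lambda, 1]$; combined with $\mathbb{A} \ge 0$ and $\mathbb{A}|_{(L^2_\oplus)^\perp} = 0$, this forces $\sigma(\mathbb{A}|_{L^2_\oplus}) \subset [c_*, 2]$ for some $c_* > 0$, provided only that $\ker T \cap L^2_\oplus$ is trivial.

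\textbf{Injectivity (the main obstacle).} The crux is to show this kernel is trivial. Given $(f,g) \in L^2_\oplus$ with $(f+g)\mathbb{1}_{\mathcal{E}} = 0$ $P\otimes Q$-a.e., Fubini and \cref{lemma:QOT-reg}(ii) produce a $P$-null set $N$ such that $g \equiv -f(x)$ $Q$-a.e.\ on $\mathcal{S}_x$ for every $x \in \Omega\setminus N$. For any $y_0 \in \Omega'$, the first equation of~\eqref{Schrodinger} forces $\{x: f_*(x)+g_*(y_0)-\|x-y_0\|^2/2 > 0\}$ to have positive $P$-measure, hence contains some $x_0 \notin N$; by continuity of $(f_*,g_*)$, the open set $V(y_0) := \{y: f_*(x_0)+g_*(y)-\|x_0-y\|^2/2 > 0\}$ is a neighborhood of $y_0$ in $\R^d$ with $V(y_0)\cap\Omega' \subset \mathcal{S}_{x_0}$, on which $g$ is $Q$-a.e.\ the constant $-f(x_0)$. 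Two such neighborhoods whose $\Omega'$-traces overlap must assign the same constant to $g$, since their intersection is relatively open in $\Omega'$ and meets $\spt Q = \Omega'$, hence carries positive $Q$-mass. Connectedness of $\Omega'$ (\cref{assumption}) upgrades local constancy to global constancy $g \equiv g_0$ $Q$-a.e.; back-substitution gives $f \equiv -g_0$ $P$-a.e., and the $L^2_\oplus$ condition $\int f\,dP = \int g\,dQ$ forces $g_0 = 0$. This connectedness/covering argument, mirroring the uniqueness proof in \cref{le:Schrodingersystem}, is the core technical point where the hypothesis that $\Omega'$ is connected is indispensable.
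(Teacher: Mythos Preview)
Your proposal is correct and follows a genuinely different route from the paper. The paper argues by contradiction: assuming $|\alpha|=\|\mathbb{L}\|_{\mathrm{op}}\ge 1$, it takes an approximate eigenvector sequence, uses compactness of the off-diagonal integral operators $\mathbb{A}_1,\mathbb{A}_2$ (\cref{lemma:compact}) to upgrade to strong convergence, derives the eigen-system \eqref{eq:systemForContrad}, and then rules out each of the four cases $\alpha>1$, $\alpha<-1$, $\alpha=1$, $\alpha=-1$ separately, invoking the injectivity \cref{lemma:compact2} for $\alpha=1$ and an $L^2$ averaging inequality for $\alpha=-1$. Your argument instead recognizes $\mathbb{L}=\mathbb{I}-(\eta/\varepsilon)T^*T$ and bounds the spectrum of $T^*T$ on $L^2_\oplus$ from both sides: the upper bound $\|T\|^2\le 2$ (strict when $(P\otimes Q)(\mathcal E)<1$) replaces the cases $\alpha\le -1$, while the lower bound via Weyl's theorem---writing $T^*T$ as a coercive multiplication operator plus a Hilbert--Schmidt off-diagonal piece---replaces the cases $\alpha\ge 1$, reducing everything to the triviality of $\ker T\cap L^2_\oplus$. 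The ingredients are the same (compactness of the integral operator, the section bound $\lambda$ of \cref{lemma:QOT-reg}(ii), and the connectedness-based injectivity), but your packaging is cleaner: self-adjointness and positivity are immediate, the four-case analysis disappears, and the role of \cref{Assumptions:GD}(ii) becomes transparent as strictness of the upper spectral bound. The paper's version, in exchange, is entirely elementary and never invokes essential spectrum or Weyl's theorem. One small point to make precise in your write-up: the implication ``$\spt\pi_*\neq\Omega\times\Omega'$ $\Rightarrow$ $\mathcal{E}^c\cap(\Omega\times\Omega')\neq\emptyset$'' deserves a line---it follows because the continuous density vanishes on a relatively open subset of $\Omega\times\Omega'$, and the zero level set $\{f_*\oplus g_*=\tfrac12\|x-y\|^2\}$ is $(P\otimes Q)$-null by \cref{lemma:QOT-reg}(i).
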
 

The proof, which occupies the rest of the section, has the following structure. Since \( \mathbb{L} \) is self-adjoint, either \(\alpha:= \|\mathbb{L}\|_{\mathrm{op}} \) or  \( \alpha:=-\|\mathbb{L}\|_{\mathrm{op}} \) belongs to the spectrum of \( \mathbb{L} \),  and 
\begin{equation*}
    \alpha= \sup_{\|(f,g)\|_{L^2_\oplus}= 1}\langle \mathbb{L}(f,g),(f,g) \rangle_{L^2_\oplus}
    \quad \text{or}\quad 
    \alpha= \inf_{\|(f,g)\|_{L^2_\oplus}= 1}\langle \mathbb{L}(f,g),(f,g) \rangle_{L^2_\oplus}.
\end{equation*}
We deduce that
$$ \left\|\left(\begin{array}{c}
     f_n ((1-\alpha)-   \frac{\eta}{\eps}Q(\mathcal{S}_{(\cdot)})) -\frac{\eta}{\eps}\int_{\mathcal{S}_{(\cdot)}} g_n dQ +b_n\\
     g_n ((1-\alpha) -   \frac{\eta}{\eps} P(\mathcal{T}_{(\cdot)})) -\frac{\eta}{\eps} \int_{\mathcal{T}_{(\cdot)}} f_n dP -b_n
\end{array} \right) \right\|_{L^2(P)\times L^2(Q)}\to 0 $$
for a sequence $(f_n,g_n)\in L^2_\oplus$ with unit norm. Moreover, we establish that the sequence 
$$ \left(\begin{array}{c}
      \int_{\mathcal{S}_{(\cdot)}} g_n dQ \\
     \int_{\mathcal{T}_{(\cdot)}} f_n dP 
\end{array} \right) \in L^2(P)\times L^2(Q)$$
is strongly pre-compact. We deduce that  $(f_n,g_n)_n$ is strongly convergent along a subsequence, and then, that one of the numbers \(\pm \|\mathbb{L}\|_{\mathrm{op}} \) is an eigenvalue of $\mathbb{L}$. We conclude by proving that all eigenvalues of \( \mathbb{L} \) lie in the open interval \( (-1,1) \), which is the main part of the argument. 

We start with several auxiliary results and detail the proof of \cref{pr:contractionExplicit} at the end of the section. Denote by $\ball_r(x)$ the open ball of radius $r$ around $x$.

\begin{lemma}\label{lemma:compact} 
    The following operators are compact:
    \begin{align*}
    \mathbb{A}_1 : L^2(Q)\to L^2(P), & \qquad \mathbb{A}_1( g )= \frac{\int_{\mathcal{S}_{(\cdot)} }g dQ}{Q(\mathcal{S}_{(\cdot)} )}, \\[.2em]
    \mathbb{A}_2 : L^2(P)\to L^2(Q), & \qquad \mathbb{A}_2(f)=\frac{\int_{\mathcal{T}_{(\cdot)} }f dP}{P(\mathcal{T}_{(\cdot)} )}.
    \end{align*}
\end{lemma}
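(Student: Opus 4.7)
The plan is to recognize each $\mathbb{A}_i$ as an integral operator whose kernel is bounded (and Borel measurable) on the compact product, which makes it Hilbert--Schmidt and therefore compact. Concretely, for $\mathbb{A}_1$ I would define
\[
K_1(x,y) := \frac{\mathbb{1}_{\mathcal{E}}(x,y)}{Q(\mathcal{S}_x)} \qquad \text{on } \Omega\times\Omega',
\]
so that $(\mathbb{A}_1 g)(x) = \int K_1(x,y)\, g(y)\, dQ(y)$, and symmetrically set $K_2(y,x) := \mathbb{1}_{\mathcal{E}}(x,y)/P(\mathcal{T}_y)$ for $\mathbb{A}_2$.

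First I would verify that $K_1$ is well-defined and Borel. Since the continuous representatives of $f_*,g_*$ (which exist by \cref{le:Schrodingersystem}) make $(x,y)\mapsto f_*(x)+g_*(y)-\tfrac12\|x-y\|^2$ continuous, the set $\mathcal{E}$ is closed in $\Omega\times\Omega'$ and hence Borel; then $x\mapsto Q(\mathcal{S}_x) = \int \mathbb{1}_{\mathcal{E}}(x,y)\,dQ(y)$ is Borel by Tonelli. Crucially, \cref{lemma:QOT-reg}(ii) gives a uniform lower bound $Q(\mathcal{S}_x)\geq \lambda>0$ for every $x\in\Omega$, so the denominator never vanishes and
\[
|K_1(x,y)| \leq \frac{1}{\lambda} \qquad \text{for all } (x,y)\in\Omega\times\Omega'.
\]

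Since $P$ and $Q$ are probability measures, this $L^\infty$ bound yields $\|K_1\|_{L^2(P\otimes Q)}^2\leq 1/\lambda^2 < \infty$. Thus $\mathbb{A}_1$ is a Hilbert--Schmidt operator from $L^2(Q)$ to $L^2(P)$ and, in particular, compact. The analogous argument using $P(\mathcal{T}_y)\geq\lambda$ from \cref{lemma:QOT-reg}(ii) shows $\mathbb{A}_2$ is Hilbert--Schmidt and compact. I do not anticipate any real obstacle here: the only substantive input is the uniform positivity of the section masses, which is already provided by \cref{lemma:QOT-reg}(ii), and the rest is a routine application of the Hilbert--Schmidt criterion. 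If one preferred to avoid Hilbert--Schmidt machinery, an equivalent route would be to approximate $K_1$ in $L^2(P\otimes Q)$ by simple tensors (finite-rank kernels), obtaining $\mathbb{A}_1$ as an operator-norm limit of finite-rank operators.
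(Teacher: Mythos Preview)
Your proof is correct and takes a different route from the paper's. You recognize $\mathbb{A}_1$ as an integral operator with bounded kernel $K_1(x,y)=\mathbb{1}_{\mathcal{E}}(x,y)/Q(\mathcal{S}_x)$ and invoke the Hilbert--Schmidt criterion. The paper instead verifies compactness directly via the sequential characterization: it takes a weak null sequence $u_n\to 0$ in $L^2(Q)$, bounds $\|\mathbb{A}_1(u_n)\|_{L^2(P)}^2$ by $\lambda^{-2}\int (\int_{\mathcal{S}_x} u_n\,dQ)^2\,dP(x)$, and then uses pointwise convergence of $\langle \mathbb{1}_{\mathcal{S}_x},u_n\rangle^2\to0$ together with dominated convergence. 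Both arguments hinge on exactly the same ingredient, namely the uniform lower bound $Q(\mathcal{S}_x)\geq\lambda$ from \cref{lemma:QOT-reg}(ii). Your Hilbert--Schmidt argument is shorter and gives the stronger conclusion that $\mathbb{A}_i$ is Hilbert--Schmidt (with an explicit bound on the HS norm), while the paper's approach is slightly more self-contained in that it avoids quoting the HS criterion. Either is perfectly adequate here.
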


\begin{proof}
We prove that \( \mathbb{A}_1 \) is compact, the second claim is analogous. It suffices to show that if \( \{u_n\}_n \subset L^2(Q) \) converges weakly to zero, then \( \| \mathbb{A}_1(u_n) \|_{L^2(P)} \to 0 \). In view of \cref{lemma:QOT-reg}~(ii),
\[
\| \mathbb{A}_1(u_n) \|_{L^2(P)}^2 = \int \left( \frac{\int_{\mathcal{S}_x} u_n(y) \, dQ(y)}{Q(\mathcal{S}_x)} \right)^2 dP(x) \leq \lambda^{-2} \int \left( \int_{\mathcal{S}_x} u_n(y) \, dQ(y) \right)^2 dP(x).
\]
Set \( h_n(x) := \left( \int_{\mathcal{S}_x} u_n(y) \, dQ(y) \right)^2 \). As \( u_n \to 0 \) weakly in \( L^2(Q) \),
\[
h_n(x) = \left\langle \mathbb{I}_{\mathcal{S}_x}, u_n \right\rangle_{L^2(Q)}^2 \to 0
\quad \text{for all~} x \in \Omega.
\]
On the other hand,  Jensen's inequality yields 
$
h_n(x)  \leq \|u_n\|_{L^2(Q)}^2.
$
As weakly convergent sequences are norm-bounded, this shows that $(h_n)$ is uniformly bounded by a constant. Using the dominated convergence theorem, we conclude that \( \| \mathbb{A}_1(u_n) \|_{L^2(P)} \to 0 \).
\end{proof}

\begin{lemma}\label{lemma:compact2} For $\mathbb{A}_1, \mathbb{A}_2$ as defined in \cref{lemma:compact}, the equation 
\begin{equation}
    \label{eq:Fredholm}
    \left( \begin{array}{c}
  \mathbb{A}_1( g )\\
  \mathbb{A}_2( f )
\end{array}\right) = - \left( \begin{array}{c}
 f\\
 g
\end{array}\right)
\end{equation}
on $L^2(P)\times L^2(Q)$ has the solution set $\{(f,g)=(c,-c): c\in\R\}$. 
\end{lemma}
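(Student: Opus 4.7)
First I would verify that each pair $(f,g)=(c,-c)$ with $c\in\R$ satisfies \eqref{eq:Fredholm}: since $\mathcal{S}_x$ and $\mathcal{T}_y$ have positive measure by \cref{lemma:QOT-reg}(ii), one directly checks $\mathbb{A}_1(-c)=-c$ and $\mathbb{A}_2(c)=c$. The converse, which is the content of the lemma, I would reduce to the scalar identity
\[
\iint_{\mathcal{E}}\bigl(f(x)+g(y)\bigr)^2\, d(P\otimes Q)(x,y) = 0.
\]
To derive it, rewrite \eqref{eq:Fredholm} componentwise as $\int_{\mathcal{S}_x}(f(x)+g(y))\, dQ(y)=0$ for $P$-a.e.\ $x$ and $\int_{\mathcal{T}_y}(f(x)+g(y))\, dP(x)=0$ for $Q$-a.e.\ $y$. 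Multiplying the first equation by $f(x)$ and integrating against $P$, and the second by $g(y)$ and integrating against $Q$, yields two integrals over $\mathcal{E}$ whose sum collapses to the display above. Hence $f+g=0$ for $(P\otimes Q)$-a.e.\ $(x,y)\in\mathcal{E}$.

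The main obstacle is upgrading this almost-everywhere cancellation to pointwise constancy of $f$ and $g$. I would introduce the open subset
\[
\mathcal{O}:=\bigl\{(x,y)\in\Omega\times\Omega':\ f_*(x)+g_*(y)-\tfrac12\|x-y\|^2>0\bigr\}\subset\mathcal{E},
\]
which is open by continuity of the potentials. Fix an arbitrary $y_0\in\Omega'$: the second Schr\"odinger identity in \eqref{Schrodinger} at $y_0$ forbids that $f_*(\cdot)+g_*(y_0)\leq\tfrac12\|\cdot-y_0\|^2$ on all of $\Omega$, so some $x_0\in\Omega$ has $(x_0,y_0)\in\mathcal{O}$. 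By openness, a rectangular neighborhood $U\times V$ of $(x_0,y_0)$ lies in $\mathcal{O}$. The relation $f+g=0$ $(P\otimes Q)$-a.e.\ on $U\times V$ then says, via Fubini, that for $P$-a.e.\ $x\in U$ one has $g(y)=-f(x)$ for $Q$-a.e.\ $y\in V$; comparing two such $x$ forces $f$ to be $P$-a.e.\ constant on $U$ (say equal to $-c_V$) and $g=c_V$ $Q$-a.e.\ on $V$.

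It remains to globalize. Since $\Omega'=\spt Q$, every nonempty open subset of $\Omega'$ has positive $Q$-measure, so whenever two neighborhoods $V,V'$ of the above form intersect, their associated constants coincide. Connectedness and compactness of $\Omega'$ then allow a standard chaining argument over the resulting open cover to collapse all local constants into a single $c'\in\R$, giving $g=c'$ $Q$-a.e. By \cref{lemma:QOT-reg}(ii), $Q(\mathcal{S}_x)\geq\lambda>0$ for every $x\in\Omega$, so the cancellation $f+g=0$ on $\mathcal{E}$ forces $f=-c'$ $P$-a.e. Setting $c:=-c'$ yields $(f,g)=(c,-c)$, as claimed.
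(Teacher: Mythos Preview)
Your argument is correct. It coincides with the paper's proof in the first half: both rewrite \eqref{eq:Fredholm} as $\int_{\mathcal{S}_x}(f+g)\,dQ=0$ and $\int_{\mathcal{T}_y}(f+g)\,dP=0$, multiply by $f$ and $g$ respectively, integrate, and add to obtain $\int_{\mathcal{E}}(f+g)^2\,d(P\otimes Q)=0$.

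The two proofs diverge in how they pass from $f\oplus g=0$ a.e.\ on $\mathcal{E}$ to constancy. The paper first shows that $f$ and $g$ admit \emph{continuous} versions, via the identity
\[
f(x)=-\int \frac{d\pi_*}{d(P\otimes Q)}(x,y)\,g(y)\,dQ(y)\qquad P\text{-a.s.},
\]
whose right-hand side is continuous in $x$; with continuous representatives, the a.e.\ relation $f+g=0$ on $\mathcal{E}$ becomes pointwise on the open set $\mathcal{O}$, and fixing a single $x_0$ immediately gives $g$ locally constant near any $y_0$. You instead stay entirely within $L^2$: you use rectangular neighborhoods $U\times V\subset\mathcal{O}$ and Fubini to extract $Q$-a.e.\ constancy of $g$ on $V$, then chain these local constants over the connected support $\Omega'$, exploiting that nonempty relatively open subsets of $\spt Q$ have positive $Q$-measure. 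Your route is more elementary in that it avoids invoking the density $d\pi_*/d(P\otimes Q)$ and the continuity step altogether, at the price of slightly more measure-theoretic care; the paper's route is cleaner once continuity is in hand and yields continuous representatives of $(f,g)$ as a byproduct.
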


\begin{proof}
Let $(f,g)\in L^2(P)\times L^2(Q)$ be any solution of \eqref{eq:Fredholm}, and recall the definition~\eqref{eq:defEset}. Using the first row of \eqref{eq:Fredholm} yields
$$\int_{\mathcal{E}}  (f(x)   + g(y))  h(x) dQ(y) dP(x) =\int_{\Omega}\left(\int_{\mathcal{S}_x}  (f(x)   + g(y))  h(x) dQ(y) \right)dP(x)  = 0$$
for all $h\in L^2(P)$. In particular, choosing $h:=f$ yields
$$ 0= \int_\mathcal{E} (f^2(x)+ f(x) g(y))   dQ(y) dP(x).$$
Analogously, 
$$ \int_\mathcal{E} (g^2(y)+ f(x) g(y))   dQ(y) dP(x) =0.$$
Adding the two displays shows that
    $\int_\mathcal{E} (f(x)+g(y))^2   dQ(y) dP(x)=0$
and hence that $f\oplus g =0$ holds $P\otimes Q$-a.s.~in $\mathcal{E}$.

Next, we argue that $f$ and $g$ admit continuous versions. Recall~\eqref{eq:PiQOTrelationDual} and that $f_*,g_*$ are continuous (see \cite[Lemma 2.6]{Nutz.2024}). In particular, 
$\pi_*$ is supported in $\mathcal{E}$ and its density is continuous. For any test function $h\in L^2(P)$, it follows from $f\oplus g =0$ $P\otimes Q$-a.s.~in $\mathcal{E}$ that 
\begin{align*}
  0&= \int  h(x) (f(x)+g(y) )\frac{d\pi_*}{d(P\otimes Q)}(x,y)   dP(x) dQ(y) \\
  &= \int \left( h(x) f(x)+  h(x)\int   \frac{d\pi_*}{d(P\otimes Q)}(x,y)  g(y) dQ(y)  \right) dP(x).
\end{align*}
Therefore,
\begin{equation}
    \label{eq:system-regularized}
    f(x)=- \int  \frac{d\pi_*}{d(P\otimes Q)}(x,y) g(y) dQ(y)\quad\mbox{$P$-a.s.},
\end{equation}
and as the right-hand side is continuous in~$x$, this shows that $f$ admits a continuous version. The argument for $g$ is analogous. We henceforth replace $f$ and $g$ by these continuous versions.

We now upgrade the almost-sure identity $f\oplus g =0$ to a pointwise identity on 
\[
    \mathcal E^+
    :=
    \left\{
    (x,y)\in\Omega\times\Omega':
    \xi_*(x,y)>0
    \right\}, \quad \xi_*(x,y):=f_*(x)+g_*(y)-\frac12\|x-y\|^2;
\]
i.e., we claim that
\begin{equation}\label{eq:fg-pointwise-Eplus}
    f(x)+g(y)=0
    \qquad\text{for all }(x,y)\in\mathcal E^+ .
\end{equation}
Indeed, fix $(x_0,y_0)\in\mathcal E^+$. Suppose for contradiction that
$f(x_0)+g(y_0)\neq0$. Since $f,g,\xi_*$ 
are continuous, there exist relative neighborhoods $V\subset\Omega$ of $x_0$
and $U\subset\Omega'$ of $y_0$, and a constant $\delta>0$, such that
\[
    V\times U\subset \mathcal E^+,
    \qquad
    |f(x)+g(y)|\geq \delta
    \quad\text{for all }(x,y)\in V\times U .
\]
As $x_0\in\spt P$ and $y_0\in\spt Q$, we have $P(V)>0$ and $Q(U)>0$. Hence,
using $\mathcal E^+\subset\mathcal E$ and the identity
$f\oplus g=0$ $P\otimes Q$-a.s. on $\mathcal E$, we obtain
\[
    0
    =
    \int_{\mathcal E} (f(x)+g(y))^2\,dP(x)dQ(y)
    \geq
    \int_{V\times U} (f(x)+g(y))^2\,dP(x)dQ(y)
    \geq
    \delta^2 P(V)Q(U)>0,
\]
a contradiction. This proves \eqref{eq:fg-pointwise-Eplus}.

It remains to show that the continuous versions $f,g$ with $f\oplus g=0$ on $\mathcal{E}^+$ satisfy $(f,g)=(-c,c)$ on $\Omega\times\Omega'$, for some $c\in\R$. Let $y\in\Omega'$. As in~\eqref{Schrodinger}, there exist $x_0\in\Omega$ with $\xi_*(x_0,y)>0$ and a neighborhood $U_y$ of $y$ in $\Omega'$ such that $\{x_0\}\times U_y\subset\mathcal{E}^+$. Thus $g(\tilde y)=-f(x_0)$ for all $\tilde y\in U_y$. We have shown that $g$ is locally constant. As $\Omega'$ is connected by \cref{assumption}, it follows that $g\equiv c$ is constant on $\Omega'$, and now~\eqref{eq:Fredholm} implies that $f\equiv -c$ on $\Omega$.
\end{proof}

\begin{lemma}\label{le:LselfAdjoint} The operator $\mathbb{L}$ is self-adjoint in $(L_\oplus^2,\langle  \cdot ,   \cdot\rangle_{L^2_\oplus} )$.
\end{lemma}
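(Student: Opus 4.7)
The plan is to factor $\mathbb{L} = \proj_\oplus \circ T$, where $T:L^2(P)\times L^2(Q) \to L^2(P)\times L^2(Q)$ denotes the operator appearing inside the projection in~\eqref{eq:defL}, namely
\[
T\begin{pmatrix} f \\ g \end{pmatrix} = \begin{pmatrix} f\,(1 - \tfrac{\eta}{\varepsilon} Q(\mathcal{S}_{(\cdot)})) - \tfrac{\eta}{\varepsilon} \int_{\mathcal{S}_{(\cdot)}} g \, dQ \\ g\,(1 - \tfrac{\eta}{\varepsilon} P(\mathcal{T}_{(\cdot)})) - \tfrac{\eta}{\varepsilon} \int_{\mathcal{T}_{(\cdot)}} f \, dP \end{pmatrix}.
\]
Since $\proj_\oplus$ is an orthogonal projection, it is self-adjoint on $L^2(P)\times L^2(Q)$, and its restriction to $L^2_\oplus$ is the identity. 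Hence, for $(f,g),(u,v)\in L^2_\oplus$,
\[
\langle \mathbb{L}(f,g),(u,v)\rangle_{L^2_\oplus} = \langle \proj_\oplus T(f,g),(u,v)\rangle = \langle T(f,g),\proj_\oplus(u,v)\rangle = \langle T(f,g),(u,v)\rangle,
\]
and it reduces to showing that $T$ is self-adjoint on $L^2(P)\times L^2(Q)$.

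The self-adjointness of $T$ is a direct Fubini calculation, crucially using that the sets $\mathcal{S}_x$ and $\mathcal{T}_y$ of~\eqref{eq:xSection} and~\eqref{eq:ySection} are the $x$- and $y$-sections of the single set $\mathcal{E}\subset\Omega\times\Omega'$ defined in~\eqref{eq:defEset}. Expanding $\langle T(f,g),(u,v)\rangle$ produces four contributions. The two ``diagonal'' terms
\[
\int fu\, dP - \tfrac{\eta}{\varepsilon}\int f(x)Q(\mathcal{S}_x)u(x)\,dP(x), \qquad \int gv\, dQ - \tfrac{\eta}{\varepsilon}\int g(y)P(\mathcal{T}_y)v(y)\,dQ(y),
\]
may, via Fubini and the indicator $\mathbb{I}_{\mathcal{E}}(x,y)$, be rewritten as $\int fu\, dP - \tfrac{\eta}{\varepsilon}\int_{\mathcal{E}} fu\, d(P\otimes Q)$ and $\int gv\, dQ - \tfrac{\eta}{\varepsilon}\int_{\mathcal{E}} gv\, d(P\otimes Q)$, which are manifestly invariant under swapping $(f,g)\leftrightarrow(u,v)$. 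The two ``cross'' terms,
\[
-\tfrac{\eta}{\varepsilon}\int u(x)\int_{\mathcal{S}_x} g(y)\,dQ(y)\,dP(x), \qquad -\tfrac{\eta}{\varepsilon}\int v(y)\int_{\mathcal{T}_y} f(x)\,dP(x)\,dQ(y),
\]
become by Fubini $-\tfrac{\eta}{\varepsilon}\int_{\mathcal{E}} u(x)g(y)\,d(P\otimes Q)$ and $-\tfrac{\eta}{\varepsilon}\int_{\mathcal{E}} f(x)v(y)\,d(P\otimes Q)$; these two interchange under $(f,g)\leftrightarrow(u,v)$, so their sum is symmetric. Therefore $\langle T(f,g),(u,v)\rangle = \langle (f,g),T(u,v)\rangle$.

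There is essentially no obstacle beyond bookkeeping. The only minor verifications are that $T$ is a bounded operator on $L^2(P)\times L^2(Q)$---which follows from $Q(\mathcal{S}_x),P(\mathcal{T}_y)\in[0,1]$ together with Cauchy--Schwarz on the cross-term integrals---and that $T$ maps into a subspace of $L^2(P)\times L^2(Q)$ so the composition with $\proj_\oplus$ is well defined. Both points are routine, and the symmetry argument above then delivers the conclusion.
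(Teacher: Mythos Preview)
Your proof is correct and follows essentially the same approach as the paper: the paper introduces the auxiliary operator $\mathbb{M}$ (your $T$ equals $\mathbb{I}-\tfrac{\eta}{\varepsilon}\mathbb{M}$), shows it is self-adjoint on $L^2(P)\times L^2(Q)$ via the representation through $\mathbb{I}_{\mathcal{E}}$ and Fubini, then passes to $L^2_\oplus$ via the projection exactly as you do. The only cosmetic difference is that the paper separates out the identity first and works with $\mathbb{M}$, whereas you keep the full $T$; the substance is identical.
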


\begin{proof}
We first consider the auxiliary operator $\mathbb{M}: L^2(P)\times L^2(Q)$ defined by
\[
\mathbb{M} \begin{pmatrix}
    f \\
    g
\end{pmatrix}
= \begin{pmatrix}
    f \cdot Q(\mathcal{S}_{(\cdot)}) + \displaystyle\int_{\mathcal{S}_{(\cdot)}} g \, dQ \\
    g \cdot P(\mathcal{T}_{(\cdot)}) + \displaystyle\int_{\mathcal{T}_{(\cdot)}} f \, dP
\end{pmatrix}.
\]
The following representation readily yields that $\mathbb{M}$ is self-adjoint:
$$\mathbb{M}\left(\begin{array}{c}
     f  \\
     g 
\end{array} \right)(x,y)=\left(\begin{array}{c}
   \int \mathbb{I}_{\mathcal{E}}(x, y') (f(x)  + g(y')) dQ(y') \\
     \int \mathbb{I}_{\mathcal{E}}(x', y) (f(x')  + g(y)) dP(x')
\end{array} \right),   $$
where     
$$ \mathcal{E}=\left\{(x,y) \in \Omega\times \Omega':  f_*(x)+g_*(y)\geq \frac{1}{2} \|x-y\|^2\right\}.$$
As a consequence, $\mathbb{M}$ induces a self-adjoint operator \( \mathbb{M}_\oplus := \proj_\oplus \mathbb{M} \) on \( L^2_\oplus \). Indeed, for every $(f,g),(u,v)\in L^2_\oplus$ we have
\begin{align*}
    \langle (f,g), \proj_\oplus \mathbb{M}(u,v) \rangle_{L^2_\oplus}&=\langle (f,g), \mathbb{M}(u,v) \rangle_{L^2(P)\times L^2(Q)}\\
    &=\langle  \mathbb{M}(f,g) ,(u,v)\rangle_{L^2(P)\times L^2(Q)} = \langle  \proj_\oplus \mathbb{M}(f,g) ,(u,v)\rangle_{L^2_\oplus}.
\end{align*}
Recalling from~\eqref{eq:defL} that $\mathbb{L}=\mathbb{I}-\frac{\eta}{\varepsilon}\mathbb{M}_\oplus$, it follows that $\mathbb{L}$ is also self-adjoint.
\end{proof}

\begin{proof}[Proof of \cref{pr:contractionExplicit}]
To show that $ \|\mathbb{L}\|_{\mathrm{op}}<1$, recall (e.g., \cite[Proposition~6.9]{Brezis2011}) that \cref{le:LselfAdjoint} implies $\|\mathbb{L}\|_{\mathrm{op}}=\max\{|\alpha^+|,|\alpha^-|\}$ where
\begin{equation*}
    \alpha^+:= \sup_{\|(f,g)\|_{L^2_\oplus}= 1}\langle \mathbb{L}(f,g),(f,g) \rangle_{L^2_\oplus},
    \qquad 
    \alpha^-:= \inf_{\|(f,g)\|_{L^2_\oplus}= 1}\langle \mathbb{L}(f,g),(f,g) \rangle_{L^2_\oplus} .
\end{equation*}
We set
\begin{equation}\label{eq:alphaCases}
    \alpha := 
    \begin{cases}
    \alpha^+, & \mbox{if } \|\mathbb{L}\|_{\mathrm{op}}=\alpha^+, \\
    \alpha^-, & \mbox{otherwise}.
    \end{cases}
\end{equation}
Suppose for contradiction that $\|\mathbb{L}\|_{\mathrm{op}}\geq 1$. Note that this implies  $\alpha\geq1$ in the first case of~\eqref{eq:alphaCases} and $\alpha\leq -1$ in the second.

By the definition of $\alpha$, there exists a sequence $(f_n,g_n)_n$ with $\|(f_n,g_n)\|_{L^2_\oplus}=1$ such that $\langle \mathbb{L}(f_n,g_n),(f_n,g_n) \rangle_{L^2_\oplus} \to \alpha$. Recall that $(f_n,g_n)$ can be considered as elements of $L^2(P)\times L^2(Q)$ with $\int f_n dP= \int g_n dQ$. Note that $\|\mathbb{L}\|_{\mathrm{op}}=|\alpha|$ implies $\|\mathbb{L} (f_n,g_n)\|^2_{L^2_\oplus}\leq\alpha^2$ and hence
\begin{align*}
    \|(\mathbb{L}- \alpha \mathbb{I})(f_n,g_n)  \|^2_{L^2_\oplus}&= -2\alpha \langle\mathbb{L} (f_n,g_n),  (f_n,g_n) \rangle_{L^2_\oplus}+\alpha^2+ \|\mathbb{L} (f_n,g_n)\|^2_{L^2_\oplus} \nonumber \\
    &\leq 2\alpha(\alpha- \langle\mathbb{L} (f_n,g_n),  (f_n,g_n) \rangle_{L^2_\oplus}) \to 0.
\end{align*}
Hence, there exists a sequence $\{b_n\}_n \subset\R$ such that 
\begin{align}\label{eq:limitLexplicit}
\left\|\left(\begin{array}{c}
     f_n ((1-\alpha)-   \frac{\eta}{\eps}Q(\mathcal{S}_{(\cdot)})) -\frac{\eta}{\eps}\int_{\mathcal{S}_{(\cdot)}} g_n dQ +b_n\\
     g_n ((1-\alpha) -   \frac{\eta}{\eps} P(\mathcal{T}_{(\cdot)})) -\frac{\eta}{\eps} \int_{\mathcal{T}_{(\cdot)}} f_n dP -b_n
\end{array} \right) \right\|_{L^2(P)\times L^2(Q)}\to 0.
\end{align}
In fact, the sequence $\{b_n\}_n$ is bounded by the choice of $(f_n,g_n)$, hence converges to a limit~$b$ after passing to a subsequence. After passing to another subsequence, the Banach--Alaoglu theorem yields that the bounded sequence $(f_n,g_n)_n$ has a weak limit $(f,g)$ in $L^2(P)\times L^2(Q)$. Recalling \cref{lemma:compact} and the fact that compact operators map weakly convergent to strongly convergent sequences, it follows that 
$$ \left\| \left(\begin{array}{c}
     \mathbb{A}_1 (g_n)  \\
    \mathbb{A}_2 (f_n) 
\end{array} \right)- \left(\begin{array}{c}
     \mathbb{A}_1 (g)  \\
    \mathbb{A}_2 (f) 
\end{array} \right)   \right\|_{L^2(P)\times L^2(Q)}\to 0 .$$
Together with~\eqref{eq:limitLexplicit},
we deduce that
$$ \left(\begin{array}{c}
     f_n \\
     g_n
\end{array} \right)\to \left(\begin{array}{c}
     \frac{ \frac{\eta}{\eps}\int_{\mathcal{S}_{(\cdot)}} g dQ -b}{(1-\alpha)-   \frac{\eta}{\eps} Q(\mathcal{S}_{(\cdot)})}\\[.5em]
     \frac{ \frac{\eta}{\eps}\int_{\mathcal{T}_{(\cdot)}} f dP +b}{(1-\alpha) -   \frac{\eta}{\eps} P(\mathcal{T}_{(\cdot)}) }
\end{array} \right) \quad\mbox{in }L^2(P)\times L^2(Q),$$
where we have used that the denominator is bounded away from zero thanks to  $|\alpha|\geq 1$ and  $\frac{\eta}{\eps} P(\mathcal{T}_{(\cdot)}), \frac{\eta}{\eps}Q(\mathcal{S}_{(\cdot)})\in [\frac{\lambda\,\eta}{ \eps},1],$ 
where $\lambda$ is as in \cref{lemma:QOT-reg}~(ii). In particular, the sequence $(f_n,g_n)$ converges strongly. It follows that $(f_n,g_n)$ converges strongly to its weak limit $(f,g)$, and in particular that $\|(f,g)\|_{L^2_\oplus}=1$ and $\int f dP= \int g dQ$. Thus, the equation 
$$ \left(\begin{array}{c}
     f \\
     g
\end{array} \right)= \left(\begin{array}{c}
     \frac{ \frac{\eta}{\eps}\int_{\mathcal{S}_{(\cdot)}} g dQ -b}{(1-\alpha)-   \frac{\eta}{\eps} Q(\mathcal{S}_{(\cdot)})}\\[.5em]
     \frac{ \frac{\eta}{\eps}\int_{\mathcal{T}_{(\cdot)}} f dP +b}{(1-\alpha) -   \frac{\eta}{\eps} P(\mathcal{T}_{(\cdot)}) }
\end{array} \right) $$
admits the solution $(f,g)\in L^2_\oplus$,
or
\begin{equation}\label{eq:stepWithb}
    \begin{cases}
    f ({(1-\alpha)-   \frac{\eta}{\eps} Q(\mathcal{S}_{(\cdot)})})= \frac{\eta}{\eps}\int_{\mathcal{S}_{(\cdot)}} g dQ -b,\\
    g({(1-\alpha) -   \frac{\eta}{\eps} P(\mathcal{T}_{(\cdot)}) })=\frac{\eta}{\eps} \int_{\mathcal{T}_{(\cdot)}} f dP+b.
\end{cases}
\end{equation}
Next, we show that $b=0$. Integrating the first and second equation of~\eqref{eq:stepWithb} with respect to~\( P \) and~\( Q \), respectively, and applying Fubini's theorem, we obtain
\begin{align}
(1-\alpha)\int f(x) \, dP(x) & = \frac{\eta}{\eps}\int_{\mathcal{E}} (f(x) + g(y)) \, d(P \otimes Q)(x,y) - b, \label{eq:interStep1}\\
(1-\alpha)\int g(y) \, dQ(y) &= \frac{\eta}{\eps}\int_{\mathcal{E}} (f(x) + g(y)) \, d(P \otimes Q)(x,y) + b.\label{eq:interStep2}
\end{align}
Subtracting \eqref{eq:interStep2} from \eqref{eq:interStep1}, we find
$
(1-\alpha) \left(\int f \, dP - \int g \, dQ \right) = -2b,
$
and recalling that $\int f dP= \int g dQ$, we conclude \( b = 0 \).

In summary, \( (f,g) \) satisfies $\|(f,g)\|_{L^2_\oplus}=1$ and $\int f dP= \int g dQ$ and solves the system
\begin{align}\label{eq:systemForContrad}
\begin{cases}
f \left((1-\alpha) - \dfrac{\eta}{\varepsilon} Q(\mathcal{S}_{(\cdot)})\right) = \dfrac{\eta}{\varepsilon} \int_{\mathcal{S}_{(\cdot)}} g \, dQ, \\[0.8em]
g \left((1-\alpha) - \dfrac{\eta}{\varepsilon} P(\mathcal{T}_{(\cdot)})\right) = \dfrac{\eta}{\varepsilon} \int_{\mathcal{T}_{(\cdot)}} f \, dP.
\end{cases}
\end{align}

\emph{Case $\alpha >1$ or $\alpha <-1$.} Then, \eqref{eq:systemForContrad} implies that $f$ is bounded $P$-a.s.\ and 
$$ \|f\|_\infty \leq  \begin{cases}
     \|f\|_\infty\sup_{x,y}\frac{\frac{\eta}{\eps} Q(\mathcal{S}_{x}) }{|1-\alpha|+ \frac{\eta}{\eps}Q(\mathcal{S}_{x})  } \frac{ \frac{\eta}{\eps} P(\mathcal{T}_{y}) }{|1-\alpha|+ \frac{\eta}{\eps} P(\mathcal{T}_{y})}  & {\rm if }\ \alpha>1,\\[.5em]
       \|f\|_\infty \sup_{x,y}\frac{ \frac{\eta}{\eps}Q(\mathcal{S}_{x}) }{1-\alpha - \frac{\eta}{\eps}Q(\mathcal{S}_{x}) } \frac{ \frac{\eta}{\eps}P(\mathcal{T}_{y}) }{1-\alpha- \frac{\eta}{\eps}P(\mathcal{T}_{y})} & {\rm if }\ \alpha<-1.
\end{cases}$$
We observe that the above supremum is $<1$ in either case, and thus that $f=0$. Similarly, $g=0$, contradicting that $\|(f,g)\|_{L^2_\oplus}=1$. 

\emph{Case $\alpha =1$.} In this case, \eqref{eq:systemForContrad} specializes to
\begin{equation*}
    \label{case-alpha=1}
    \begin{cases}
    f Q(\mathcal{S}_{(\cdot)})=-\int_{\mathcal{S}_{(\cdot)}} g dQ, \\
    g   P(\mathcal{T}_{(\cdot)} )=-\int_{\mathcal{T}_{(\cdot)}} f dP,
\end{cases}
\end{equation*}
which by \cref{lemma:compact2} means that $(f,g)=(0,0)$ in $L^2_\oplus$, again contradicting  $\|(f,g)\|_{L^2_\oplus}=1$. 

\emph{Case $\alpha =-1$.} In this case, \eqref{eq:systemForContrad} can be written as
\begin{equation*}%
\begin{cases}
    2f(x)=\frac{\eta}{\eps}\int_{\mathcal{S}_{x}} (f(x)+g(y')) dQ(y'), \\
    2g(y)=\frac{\eta}{\eps}\int_{\mathcal{T}_{y}} (f(x')+g(y)) dP(x').
\end{cases}
\end{equation*}
Adding these equations, taking squares, applying the inequality $(a+b)^2\leq 2(a^2+b^2)$ and then Jensen's inequality, we deduce
\begin{align*}
   4\left(  f(x)+g(y) \right)^2 &= \left(\frac{\eta}{\eps} \right)^2  \left(\int_{\mathcal{S}_{x} } (f(x)+g(y')) d Q(y') + \int_{\mathcal{T}_{y} } (f(x')+g(y)) d P(x') \right)^2\\
   &\leq 2\left(\frac{\eta}{\eps} \right)^2  \left(\left(\int_{\mathcal{S}_{x} } (f(x)+g(y')) d Q(y')\right)^2 + \left(\int_{\mathcal{T}_{y} } (f(x')+g(y)) d P(x') \right)^2 \right)\\
   & \leq 2\left(\frac{\eta}{\eps} \right)^2  \left( Q(\mathcal{S}_{x}) \int  (f(x)+g(y'))^2 d Q(y')+ P(\mathcal{T}_{y} )\int   (f(x')+g(y))^2 d P(x')  \right).
\end{align*}
Now integrating w.r.t.\ $P\otimes Q$ yields
\begin{align*}
   0<\int \left(  f(x)+g(y) \right)^2 dP(x)d Q(y)
   & \leq \left(\frac{\eta}{\eps} \right)^2  \int (f(x)+g(y))^2 dP(x) d Q(y) \\
   & <  \int (f(x)+g(y))^2 dP(x) d Q(y)
\end{align*}
as $\eta<\eps$ by \cref{Assumptions:GD}, and we obtain the desired contradiction to $\|(f,g)\|_{L^2_\oplus}=1$.
\end{proof}

\section{Proof of linear convergence} \label{Section:linear-GD}

Recall from \eqref{GradientDescent} that the gradient descent iterates satisfy
\begin{align}\label{eq:algodescription}
    \left(  \begin{array}{c}
     f_{n+1}-f_{n}\\
       g_{n+1}-g_n    
    \end{array} \right) = \frac{\eta}{\eps}\cdot \proj_\oplus\left(  \begin{array}{c}
     \eps-\int{  \left(f_n(\cdot)+ g_n(y)- \frac{1}{2}\|\cdot-y\|^2\right)_+ } dQ(y)\\
      \eps-\int{  \left(f_n(x)+ g_n(\cdot)- \frac{1}{2}\|x-\cdot\|^2\right)_+ } dP(x)    
\end{array} \right).
\end{align}
We first represent the iterates in a form convenient for our analysis, with the operator~$\mathbb{L}_n$ introduced in \cref{Lemma:representationExplicit}. To prove the main result, it then remains to show that $\mathbb{L}_{n}$ converges to $\mathbb{L}$ in operator norm. We first show that $(f_n, g_n)\to (f_*, g_*)$; this is a straightforward Arzel\`a--Ascoli argument (\cref{lemma:consistency}). Next, the proof of $\mathbb{L}_{n}\to \mathbb{L}$ is given in \cref{pr:convergenceOperatorsExplicit}. Combining  $\|\mathbb{L}_{n}- \mathbb{L}\|_{\mathrm{op}}\to0$ with the fact that $\mathbb{L}$ is a contraction (\cref{pr:contractionExplicit}), we complete the proof of the main result in \cref{co:mainRes}.

Let $\mathcal{L}_1$  denote one-dimensional Lebesgue measure.

\begin{lemma}\label{Lemma:representationExplicit}
    The gradient descent iterates $(f_n,g_n)$ satisfy 
\begin{align}\label{eq:GDwrittenWithLn}
    \left( \begin{array}{c}
  f_{n+1}-f_{*}    \\
   g_{n+1}-g_{*}  
\end{array}\right)= \mathbb{L}_n\left( \begin{array}{c}
  f_{n}-f_{*}    \\
   g_{n}-g_{*}  
\end{array}\right)
\end{align}
for the operator 
\begin{align}\label{eq:defLn}
     \mathbb{L}_n\left( \begin{array}{c}
f    \\
   g  
\end{array}\right):= \proj_\oplus\left( \begin{array}{c}
    f(1- \frac{\eta}{\eps}\cdot[\mathcal{L}_1\otimes Q](\mathcal{S}_{n,(\cdot)}))-   \frac{\eta}{\eps}\cdot\int_{\mathcal{S}_{n,(\cdot)}} g(y) d[\mathcal{L}_1\otimes  Q](\lambda, y) \\
        g(1-\frac{\eta}{\eps}\cdot [\mathcal{L}_1\otimes P](\mathcal{T}_{n,(\cdot)}))-   \frac{\eta}{\eps}\cdot\int_{\mathcal{T}_{n,(\cdot)}} f(x) d[\mathcal{L}_1\otimes  P](\lambda, x)
    \end{array} \right),
\end{align}
where
\begin{align*}
     \mathcal{S}_{n,x}&:=\left\{(\lambda,y)\in [0,1]\times \Omega':
      \ \lambda(f_{*}(x)+g_{*}(y))+ (1-\lambda) (f_{n}(x)+g_{n}(y)) \geq \frac{1}{2}\|x-y\|^2\right\}, \\
    \mathcal{T}_{n,y}&:=\left\{(\lambda,x)\in [0,1]\times \Omega:
      \ \lambda(f_{*}(x)+g_{*}(y))+ (1-\lambda) (f_{n}(x)+g_{n}(y)) \geq \frac{1}{2}\|x-y\|^2\right\} .
\end{align*}
\end{lemma}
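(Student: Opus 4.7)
The key identity is the absolute-continuity representation
\begin{equation*}
    (a)_+ - (b)_+ = (a - b)\int_0^1 \mathbb{I}_{\{\lambda a + (1-\lambda) b > 0\}}\,d\lambda, \qquad a,b \in \mathbb{R},
\end{equation*}
which holds because $t \mapsto (t)_+$ is absolutely continuous with derivative $\mathbb{I}_{\{t > 0\}}$ a.e.

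First, I would invoke the Schrödinger system from \cref{le:Schrodingersystem} to rewrite $\eps$ in the update equation~\eqref{eq:algodescription} as $\int (f_*(x) + g_*(y) - \tfrac{1}{2}\|x-y\|^2)_+\, dQ(y)$ for $x\in\Omega$ (and analogously for the second coordinate with $P$). The bracket inside $\proj_\oplus$ in~\eqref{eq:algodescription} then becomes a difference of positive-part integrals evaluated at $(f_*,g_*)$ and $(f_n,g_n)$.

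Next, I would apply the representation above with $a = f_*(x) + g_*(y) - \tfrac{1}{2}\|x-y\|^2$ and $b = f_n(x) + g_n(y) - \tfrac{1}{2}\|x-y\|^2$, so that $a - b = (f_*(x) - f_n(x)) + (g_*(y) - g_n(y))$. Swapping the $\lambda$ and $y$ integrals via Fubini and using $f_{n+1} - f_* = (f_n - f_*) + (f_{n+1} - f_n)$ (symmetrically for $g$), the resulting expression assembles into precisely $\mathbb{L}_n(f_n - f_*,\, g_n - g_*)$ as defined in~\eqref{eq:defLn}. Note that $(f_n - f_*, g_n - g_*) \in L^2_\oplus$ since both pairs lie in $L^2_\oplus$, so the projection $\proj_\oplus$ inside $\mathbb{L}_n$ is compatible with the identity we derive.

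One subtle point is that the FTC uses the strict indicator $\mathbb{I}_{\{\cdot > 0\}}$ whereas $\mathcal{S}_{n,x}$ and $\mathcal{T}_{n,y}$ use $\geq$. For each fixed $(x,y)$, the map $\lambda \mapsto \lambda a + (1-\lambda) b$ is affine and hence attains the value $0$ on at most one point of $[0,1]$; the exceptional case where it is identically zero (requiring $a = b = 0$) contributes nothing since the prefactor $a - b$ vanishes there. Thus the boundary $\{\lambda a + (1-\lambda) b = 0\}$ is $\mathcal{L}_1 \otimes Q$-null by Fubini, and the two indicators yield the same integral. This boundary bookkeeping is essentially the only non-routine step; the remainder of the proof is a direct substitution.
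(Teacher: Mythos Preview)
Your proposal is correct and takes essentially the same approach as the paper: both rewrite $\eps$ via the Schr\"odinger system~\eqref{Schrodinger}, then apply the fundamental theorem of calculus to the positive-part function along the segment $\lambda\mapsto\lambda a+(1-\lambda)b$ to linearize the difference $(a)_+-(b)_+$. Your discussion of the strict-vs-nonstrict indicator is a nice addition; the paper simply writes $\mathbb{I}_{\phi(\lambda)\ge 0}$ without comment, implicitly relying on exactly the null-set argument you spell out.
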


\begin{proof}
    In view of \eqref{Schrodinger}, \eqref{eq:algodescription} implies that
\begin{multline}\label{eq:algodescriptionIncrement}
    \left( \begin{array}{c}
        {f}_{n+1}-{f}_{*}  \\
        {g}_{n+1}-{g}_{*} 
    \end{array} \right)- \left( \begin{array}{c}
        {f}_{n}-{f}_{*}  \\
        {g}_{n}-{g}_{*}  
    \end{array} \right)\\
    = \frac{\eta}{\eps}\cdot \proj_\oplus\left( \begin{array}{c}
     \int \left(f_{*}(\cdot)+g_{*}(y)-\frac{1}{2}\|\cdot-y\|^2\right)_+-  \left(f_{n}(\cdot)+g_{n}(y)-\frac{1}{2}\|\cdot-y\|^2\right)_+ dQ(y) \\
        \int \left(f_{*}(x)+g_{*}(\cdot)-\frac{1}{2}\|x-\cdot\|^2\right)_+-  \left(f_{n}(x)+g_{n}(\cdot)-\frac{1}{2}\|x-\cdot\|^2\right)_+ dP(x)
    \end{array} \right).
\end{multline}
After applying the fundamental theorem of calculus in the form
\begin{align*}
\phi(1)_+ - \phi(0)_+ &= \int_0^1 \frac{d}{d\lambda} [\phi(\lambda)_+] \, d\mathcal{L}_1(\lambda)
\end{align*}
to the functions
\begin{align*}
\phi(\lambda) &= \lambda(f_*(x) + g_*(y)) + (1 - \lambda)(f_n(x) + g_n(y)) -\frac{1}{2}\|x-y\|^2,\\
\frac{d}{d\lambda} [\phi(\lambda)_+] &= \mathbb{I}_{\phi(\lambda) \ge 0} \cdot (f_*(x) + g_*(y) - f_n(x) - g_n(y)) \quad\mbox{a.e.},
\end{align*}
we get 
\begin{multline*}
    \left( \begin{array}{c}
        {f}_{n+1}-{f}_{*}  \\
        {g}_{n+1}-{g}_{*}  
    \end{array} \right)- \left( \begin{array}{c}
        {f}_{n}-{f}_{*}  \\
        {g}_{n}-{g}_{*}  
    \end{array} \right)\\
    = - \frac{\eta}{\eps}\cdot\proj_\oplus\left( \begin{array}{c}
    (f_{n}-f_{*}) [\mathcal{L}_1\otimes Q](\mathcal{S}_{n,(\cdot)})+   \int_{\mathcal{S}_{n,(\cdot)}} (g_{n}(y)-g_{*}(y)) d[\mathcal{L}_1\otimes  Q](\lambda, y) \\
        (g_{n}-g_{*}) [\mathcal{L}_1\otimes P](\mathcal{T}_{n,(\cdot)})+   \int_{\mathcal{T}_{n,(\cdot)}} (f_{n}(x)-f_{*}(x)) d[\mathcal{L}_1\otimes  P](\lambda, x)
    \end{array} \right)
  \end{multline*}
and the claim follows.
\end{proof}

The next two lemmas establish that the gradient descent iterates are uniformly bounded and equicontinuous.

\begin{lemma}\label{Lemma:representationExplicit2}
Let $\eta\in (0, \eps]$, then
\begin{equation}
    \label{Non-uniform-contraction}
    \left\| \left( \begin{array}{c}
  f_{n+1}-f_{*}    \\
   g_{n+1}-g_{*}  
\end{array}\right) \right\|_{\mathcal{C}_\oplus} \leq  2\cdot \left\| \left( \begin{array}{c}
  f_{0}-f_{*}    \\
   g_{0}-g_{*}  
\end{array}\right) \right\|_{\mathcal{C}_\oplus} .
\end{equation}
\end{lemma}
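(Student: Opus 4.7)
The key observation is that when $\eta \leq \varepsilon$, the coefficient in front of $f_n - f_*$ appearing in the representation of \Cref{Lemma:representationExplicit} is a convex-combination weight: indeed, $\alpha_x := \frac{\eta}{\varepsilon}[\mathcal{L}_1\otimes Q](\mathcal{S}_{n,x}) \in [0,1]$, and similarly for $\beta_y := \frac{\eta}{\varepsilon}[\mathcal{L}_1\otimes P](\mathcal{T}_{n,y})$. This makes the update rule a contraction in a suitable $\sup$-norm sense, provided we exploit the freedom of shifting by a constant that is built into the $\mathcal{C}_\oplus$ norm.

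My plan is to introduce a shifted pair at each step. Fix $\delta>0$ and choose $a_0 \in \mathbb{R}$ such that
\[
\|u_0\|_\infty + \|v_0\|_\infty \leq \|(f_0-f_*,g_0-g_*)\|_{\mathcal{C}_\oplus} + \delta, \qquad u_0 := f_0-f_* + a_0,\ v_0 := g_0-g_*-a_0.
\]
Then I will recursively define shifts $(u_{n+1},v_{n+1})$ of $(f_{n+1}-f_*, g_{n+1}-g_*)$ as follows. Writing the first component of $\mathbb{L}_n$ out explicitly and substituting $f_n-f_* = u_n - a_n$, $g_n-g_* = v_n + a_n$, a direct calculation shows that
\[
 (f_{n+1}-f_*)(x) = u_n(x)(1-\alpha_x) - \tfrac{\eta}{\varepsilon}\!\!\int_{\mathcal{S}_{n,x}}\! v_n(y)\,d[\mathcal{L}_1\otimes Q](\lambda,y) \; - a_n - c_n,
\]
where $c_n$ is the constant absorbed by $\proj_\oplus$; the analogous identity holds for $g_{n+1}-g_*$ with the sign of the constant reversed. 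Setting $u_{n+1}$ (resp.\ $v_{n+1}$) to be the above expression without the constants yields $u_{n+1}\oplus v_{n+1} = (f_{n+1}-f_*)\oplus (g_{n+1}-g_*)$.

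Next, I will bound the sup norms of $u_{n+1}, v_{n+1}$ pointwise by a genuine convex combination: since $|\frac{\eta}{\varepsilon}\int_{\mathcal{S}_{n,x}} v_n \, d[\mathcal{L}_1\otimes Q]| \leq \alpha_x \|v_n\|_\infty$, we get
\[
 |u_{n+1}(x)| \leq (1-\alpha_x)\|u_n\|_\infty + \alpha_x \|v_n\|_\infty \leq \max\bigl(\|u_n\|_\infty,\|v_n\|_\infty\bigr),
\]
and similarly $|v_{n+1}(y)| \leq \max(\|u_n\|_\infty,\|v_n\|_\infty)$. Taking suprema gives
\[
 \max\bigl(\|u_{n+1}\|_\infty,\|v_{n+1}\|_\infty\bigr) \leq \max\bigl(\|u_n\|_\infty,\|v_n\|_\infty\bigr),
\]
so this quantity is non-increasing along the iteration.

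To conclude, I use that $u_{n+1}\oplus v_{n+1} = (f_{n+1}-f_*)\oplus(g_{n+1}-g_*)$ implies
\[
 \|(f_{n+1}-f_*,g_{n+1}-g_*)\|_{\mathcal{C}_\oplus} \leq \|u_{n+1}\|_\infty + \|v_{n+1}\|_\infty \leq 2\max(\|u_0\|_\infty,\|v_0\|_\infty) \leq 2(\|u_0\|_\infty+\|v_0\|_\infty),
\]
and then letting $\delta\to 0$ yields the claim. The one point requiring a bit of care is the algebraic verification that the shifts $a_n$ and projection constants $c_n$ drop out cleanly---that the $a_n$ term on the $f$ side and the $+a_n$ term produced by writing $g_n-g_* = v_n+a_n$ inside the integral cancel---but this is a routine computation using $\alpha_x \in [0,1]$ and the shift-invariance of $\oplus$. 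No deeper analytic obstacle is expected: the estimate $\eta \leq \varepsilon$ is used exactly once, to ensure $\alpha_x,\beta_y \in [0,1]$.
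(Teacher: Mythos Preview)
Your proof is correct and shares the same core idea as the paper's: use $\eta\le\eps$ to recognize a convex-combination structure in the update, deduce that $\max(\|u_n\|_\infty,\|v_n\|_\infty)$ is non-increasing for suitable shifted differences, and then pass to the $\mathcal{C}_\oplus$ norm with a factor of $2$. Your algebraic claim about the shifts is indeed routine: the $a_n(1-\alpha_x)$ from the $(u_n-a_n)$ term and the $-a_n\alpha_x$ from integrating $v_n+a_n$ combine to the constant $-a_n$, and the analogous computation on the $g$-side yields $+a_n$, so $(u_{n+1},v_{n+1})$ is a bona fide $\mathcal{C}_\oplus$-representative of $(f_{n+1}-f_*,g_{n+1}-g_*)$.

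The execution differs from the paper in one respect worth noting. You route through the exact representation $\mathbb{L}_n$ of \cref{Lemma:representationExplicit}, which already encodes the fundamental-theorem-of-calculus linearization; this gives you an \emph{equality} for $u_{n+1}$, so a single bound $|u_{n+1}(x)|\le (1-\alpha_x)\|u_n\|_\infty+\alpha_x\|v_n\|_\infty$ handles both signs at once. The paper instead works directly from~\eqref{eq:algodescriptionIncrement} and the elementary inequality $(t)_+-(s)_+\le \mathbb{I}_{\{t\ge0\}}(t-s)$, applying it twice (once with $t$ built from $(f_*,g_*)$, once from $(f_n,g_n)$) to get upper and lower bounds separately, with two different sets $\mathcal{S}_x$ and $\tilde{\mathcal{S}}_{n,x}$. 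Your path is a bit cleaner in that regard, at the cost of depending on the previous lemma; the paper's path is self-contained. The paper also leaves the shift bookkeeping implicit (``up to taking equivalence classes in $\mathcal{C}_\oplus$''), whereas you make it explicit via the $a_n$; both are valid.
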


\begin{proof}
    Up to taking equivalence classes in $\mathcal{C}_\oplus$, \eqref{eq:algodescriptionIncrement} states that
    \begin{align*}
       & \left( \begin{array}{c}
  f_{n+1}-f_{*}    \\
   g_{n+1}-g_{*}  
\end{array}\right)= \left( \begin{array}{c}
  f_{n}-f_{*}    \\
   g_{n}-g_{*}  
\end{array}\right) \\
&\quad + \frac{\eta}{\eps}\cdot \left(  \begin{array}{c}
     \int{  \left(f_*(\cdot)+ g_*(y)- \frac{1}{2}\|\cdot-y\|^2\right)_+ } -   \left(f_n(\cdot)+ g_n(y)- \frac{1}{2}\|\cdot-y\|^2\right)_+  dQ(y)\\
      \int  \left(f_*(x)+ g_*(\cdot)- \frac{1}{2}\|x-\cdot\|^2\right)_+  -   \left(f_n(x)+ g_n(\cdot)- \frac{1}{2}\|x-\cdot\|^2\right)_+  dP(x)
\end{array} \right).
    \end{align*}
Using the inequality
\begin{equation}\label{ineq-positive-part}
    (t)_+ -  (s)_+ \leq \mathbb{I}_{\{t\geq 0\}} (t-s) 
\end{equation}
with $t=f_*(x)+ g_*(y)- \frac{1}{2}\|x-y\|^2$ and $s=f_n(x)+ g_n(y)- \frac{1}{2}\|x-y\|^2$, we infer
 \begin{align*}
    {f}_{n+1}(x)-{f}_{*}(x)
    &\leq ({f}_{n}(x)-{f}_{*}(x))  +\frac{\eta}{\eps}\cdot\int_{\mathcal{S}_{x}} (f_{*}(x)-f_{n}(x))+  (g_{*}(y)-g_{n}(y)) dQ(y)\\
    &=  ({f}_{n}(x)-{f}_{*}(x))\left(1  -\frac{\eta}{\eps}\cdot Q(\mathcal{S}_{x})\right)- \frac{\eta}{\eps}\cdot \int_{\mathcal{S}_{x}} (g_{n}(y)-g_{*}(y)) dQ(y)\\
    &\leq \|{f}_{n}-{f}_{*}\|_\infty\left(1  -\frac{\eta}{\eps}\cdot Q(\mathcal{S}_{x})\right)+ \frac{\eta}{\eps} \cdot Q(\mathcal{S}_{x}) \|g_{n}-g_{*}\|_\infty.%
\end{align*}
Note that the right-hand side is a convex combination of $\|{f}_{n}-{f}_{*}\|_\infty$ and $\|{g}_{n}-{g}_{*}\|_\infty$ as $\eta\in (0,\eps]$. 
Writing
\begin{equation}
    \label{eq:Sn}
    \tilde{\mathcal{S}}_{n,x}:=  \left\{y\in\Omega': \ f_{n}(x)+ g_{n}(y)\geq \frac{1}{2}\|x-y\|^2 \right\}, \quad x\in \Omega, \ n\in \NN,
\end{equation}
we analogously get
 \begin{align*}
    {f}_{*}(x)-{f}_{n+1}(x)
    &\leq \|{f}_{n}-{f}_{*}\|_\infty\left(1  -\frac{\eta}{\eps}\cdot Q( \tilde{\mathcal{S}}_{n,x})\right)+ \frac{\eta}{\eps} \cdot Q( \tilde{\mathcal{S}}_{n,x}) \|g_{n}-g_{*}\|_\infty,
\end{align*}
a convex combination of the same quantities. 
Together, it follows that 
$$ \|{f}_{n+1}-{f}_{*}\|_\infty\leq  \max\left(\|{f}_{n}-{f}_{*}\|_\infty,  \|g_{n}-g_{*}\|_\infty\right) .$$
Repeating the same argument for $\|{g}_{n+1}-{g}_{*}\|_\infty$, we conclude 
$$  \max\left(\|{f}_{n+1}-{f}_{*}\|_\infty,  \|g_{n+1}-g_{*}\|_\infty\right) \leq  \max\left(\|{f}_{n}-{f}_{*}\|_\infty,  \|g_{n}-g_{*}\|_\infty\right)$$
for all $n$ and hence 
$$  \max\left(\|{f}_{n+1}-{f}_{*}\|_\infty,  \|g_{n+1}-g_{*}\|_\infty\right) \leq  \max\left(\|{f}_{0}-{f}_{*}\|_\infty,  \|g_{0}-g_{*}\|_\infty\right).$$
The claim follows after recalling the definition of the norm $\|\cdot\|_{\mathcal{C}_\oplus}$.
\end{proof}

\begin{lemma}\label{lemma:modulus-cont}
    Let $\eta\in (0, \eps]$ and let $(f_0,g_0)$ be Lipschitz with constant  $L_0$. Then for every $n\geq1$, the gradient descent iterates $ (f_n,g_n)$  are Lipschitz with constant $L$, where
    $$L=\max\{L_0,C\}, \qquad C:=\max\{\|x-y\|: x\in\Omega, y\in\Omega' \}.$$ 
\end{lemma}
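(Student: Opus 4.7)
The plan is to prove the claim by induction on $n$, tailored to the structure of the gradient descent update \eqref{eq:algodescription}. The base case $n=0$ holds since $L_0 \leq L$. For the inductive step, I assume $f_n$ and $g_n$ are $L$-Lipschitz and show the same for $f_{n+1}$ and $g_{n+1}$.

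I first observe that the operator $\proj_\oplus$ in \eqref{eq:algodescription}, with its explicit form \eqref{Projection-operator}, adds a scalar constant to $f_{n+1}$ and subtracts the same constant from $g_{n+1}$, so it preserves Lipschitz constants. Thus it suffices to analyze the pre-projection update. Writing $H_n(x) := \int \bigl(f_n(x)+g_n(y)-\tfrac{1}{2}\|x-y\|^2\bigr)_+\,dQ(y)$, I have
\begin{align*}
  f_{n+1}(x) - f_{n+1}(x') = \bigl(f_n(x) - f_n(x')\bigr) - \tfrac{\eta}{\eps}\bigl(H_n(x) - H_n(x')\bigr).
\end{align*}
I would then apply the one-sided inequality \eqref{ineq-positive-part} to bound $H_n(x') - H_n(x)$ from above by an integral against the indicator of $\tilde{\mathcal{S}}_{n,x'}$ (from \eqref{eq:Sn}), yielding
\begin{align*}
  f_{n+1}(x) - f_{n+1}(x') \leq \bigl(f_n(x) - f_n(x')\bigr)\bigl(1 - \tfrac{\eta}{\eps} Q(\tilde{\mathcal{S}}_{n,x'})\bigr) + \tfrac{\eta}{\eps} \int_{\tilde{\mathcal{S}}_{n,x'}} \tfrac{1}{2}\bigl(\|x-y\|^2 - \|x'-y\|^2\bigr)\,dQ(y).
\end{align*}

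The induction hypothesis gives $f_n(x) - f_n(x') \leq L\|x-x'\|$. For the quadratic integrand I would use the factorization
\begin{align*}
  \tfrac{1}{2}\bigl(\|x-y\|^2 - \|x'-y\|^2\bigr) = \tfrac{1}{2}\bigl(\|x-y\|-\|x'-y\|\bigr)\bigl(\|x-y\|+\|x'-y\|\bigr),
\end{align*}
together with the reverse triangle inequality and the definition of $C$, to bound it in absolute value by $C\|x-x'\|$ for $x,x' \in \Omega$ and $y \in \Omega'$. Since $\eta \leq \eps$ forces $1 - \tfrac{\eta}{\eps} Q(\tilde{\mathcal{S}}_{n,x'}) \in [0,1]$, the right-hand side is a convex combination of $L\|x-x'\|$ and $C\|x-x'\|$, hence at most $\max\{L,C\}\|x-x'\| = L\|x-x'\|$. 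Swapping $x$ and $x'$ yields the reverse inequality, proving $f_{n+1}$ is $L$-Lipschitz. The argument for $g_{n+1}$ is identical after interchanging the roles of $(P,\Omega)$ and $(Q,\Omega')$.

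The only delicate point is securing the convex-combination structure on the right-hand side. This requires both the step-size restriction $\eta \leq \eps$ (to keep the coefficient $1 - \tfrac{\eta}{\eps}Q(\tilde{\mathcal{S}}_{n,x'})$ nonnegative) and a careful application of \eqref{ineq-positive-part} in the correct direction, so that only the indicator of $\tilde{\mathcal{S}}_{n,x'}$ enters rather than a difference of indicators. The rest is a direct calculation mirroring the $L^\infty$ analysis in \cref{Lemma:representationExplicit2}.
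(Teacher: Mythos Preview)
Your proposal is correct and follows essentially the same route as the paper's proof: induction on $n$, the one-sided inequality \eqref{ineq-positive-part} applied with the indicator of $\tilde{\mathcal{S}}_{n,x'}$, the bound $\tfrac{1}{2}\bigl|\|x-y\|^2-\|x'-y\|^2\bigr|\leq C\|x-x'\|$, and the convex-combination structure afforded by $\eta\leq\eps$. Your explicit remarks that $\proj_\oplus$ shifts by constants (hence preserves Lipschitz constants) and that swapping $x,x'$ gives the reverse inequality are details the paper leaves implicit, but the argument is the same.
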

\begin{proof}
    Arguing inductively, let $L_{n-1}$ denote the Lipschitz constant of $f_{n-1}$. 
    Fix $x,x' \in \Omega$ with $x\neq x' $. By \eqref{eq:algodescription},
        \begin{align*}
    {f}_{n}(x)-{f}_{n}(x' )= ({f}_{n-1}(x)-{f}_{n-1}(x')) & -\frac{\eta}{\eps}\cdot\int \left(f_{n-1}(x)+g_{n-1}(y)-\frac{1}{2}\|x-y\|^2\right)_+ dQ(y)\\
    &+\frac{\eta}{\eps}\cdot\int \left(f_{n-1}(x' )+g_{n-1}(y)-\frac{1}{2}\|x' -y\|^2\right)_+ dQ(y).
\end{align*}
Recalling the definition of $\tilde{\mathcal{S}}_{n-1,x'}$ from \cref{eq:Sn} and using the inequality \eqref{ineq-positive-part}, we get   
 \begin{align*}
    &{f}_{n}(x)-{f}_{n}(x' )\\
    &\leq ({f}_{n-1}(x)-{f}_{n-1}(x'))  +\frac{\eta}{\eps}\cdot\int_{\tilde{\mathcal{S}}_{n-1,x'}} (f_{n-1}(x')-f_{n-1}(x))+\frac{\|x-y\|^2-\|x' -y\|^2}{2} dQ(y)\\
    &\leq  ({f}_{n-1}(x)-{f}_{n-1}(x'))(1  -\frac{\eta}{\eps}\cdot Q(\tilde{\mathcal{S}}_{n-1,x'}))+ \frac{\eta}{\eps}\cdot\int_{\tilde{\mathcal{S}}_{n-1,x'}}\frac{\|x-y\|^2-\|x' -y\|^2}{2} dQ(y)
    \\&\leq ({f}_{n-1}(x)-{f}_{n-1}(x'))(1  -\frac{\eta}{\eps}\cdot Q(\tilde{\mathcal{S}}_{n-1,x'}))+ \frac{\eta}{\eps}\cdot C\|x-x'\|Q(\tilde{\mathcal{S}}_{n-1,x'}),
\end{align*}
where $C=\max\{\|x-y\|: x\in\Omega, y\in\Omega' \}$.
As a consequence, 
$$  \frac{{f}_{n}(x)-{f}_{n}(x' )}{\|x-x'\|}  \leq L_{n-1} (1  -\frac{\eta}{\eps}\cdot Q(\tilde{\mathcal{S}}_{n-1,x'}))+ \frac{\eta}{\eps}\cdot Q(\tilde{\mathcal{S}}_{n-1,x'}) C.$$
Noting that the right-hand side is a convex combination of  $L_{n-1}$ and $C$, we have 
$$ \frac{{f}_{n}(x)-{f}_{n}(x' )}{\|x-x'\|}  \leq \max\left( L_{n-1} , C \right)$$
and the claim for $f_n$ follows. The proof for $g_n$ is analogous.
\end{proof}

\begin{lemma}\label{le:lipschitzgradient} The operator ${\rm D} \Gamma: L^{2}_{\oplus} \to L^{2}_{\oplus}$, given by  \cref{eq:gradientGamma}, is $2/\eps$-Lipschitz.
\end{lemma}
\begin{proof}
    Denote the projections of ${\rm D} \Gamma$ to each of its coordinates by
    \begin{align*}
        {\rm D} \Gamma_{1}(f,g) & :=  1-\frac{1}{\eps}\int{ \left(f(\cdot)+ g(y)- \frac{1}{2}\|\cdot-y\|^2\right)_+ } dQ(y), \\
        {\rm D} \Gamma_{2}(f,g) & :=  1-\frac{1}{\eps}\int{  \left(f(x)+ g(\cdot)- \frac{1}{2}\|x-\cdot\|^2\right)_+ } dP(x).
    \end{align*}
    For $(f,g)$ and $(u,v)$ in $L^{2}_{\oplus}$,
    \begin{multline*}
        |{\rm D}\Gamma_{1}(f,g) - {\rm D}\Gamma_{1}(u,v)| \\ =
        \frac{1}{\eps} \left| \int \left(u(\cdot)+ v(y)- \frac{1}{2}\|\cdot-y\|^2\right)_+ - \left(f(\cdot)+ g(y)- \frac{1}{2}\|\cdot-y\|^2\right)_+ dQ(y) \right|,
    \end{multline*}
    and applying the inequality $|(t)_{+} - (s)_{+}| \leq |t - s|$ yields
    \begin{align*}
        |{\rm D}\Gamma_{1}(f,g) - {\rm D}\Gamma_{1}(u,v)| & \leq \frac{1}{\eps} \int |u(\cdot) + v(y) - f(\cdot) - g(y)| dQ(y) \\
        & \leq \frac{1}{\eps} \left( |u(\cdot) - f(\cdot)| + \int |v(y) - g(y)| dQ(y) \right).
    \end{align*}
    Taking the square and integrating, we use Jensen's inequality and $(a+b)^{2} \leq 2a^{2} + 2b^{2}$ to get
    \begin{align*}
        \| {\rm D}\Gamma_{1}(f,g) - {\rm D}\Gamma_{1}(u,v)\|^{2}_{L^{2}(P)}  & \leq \frac{2}{\eps^{2}} \left( \|f - u\|^{2}_{L^{2}(P)} + \|g - v\|^{2}_{L^{2}(Q)} \right) \\
        & = \frac{2}{\eps^{2}} \|(f,g) - (u,v)\|^{2}_{L^{2}_{\oplus}}.
    \end{align*}
    Using the analogous bound for ${\rm D}\Gamma_{2}$, we obtain
    \begin{align*}
        \| {\rm D}\Gamma(f,g) - {\rm D}\Gamma(u,v)\|^{2}_{L^{2}_{\oplus}} & = \| {\rm D}\Gamma_{1}(f,g) - {\rm D}\Gamma_{1}(u,v)\|^{2}_{L^{2}(P)} + \| {\rm D}\Gamma_{2}(f,g) - {\rm D}\Gamma_{2}(u,v)\|^{2}_{L^{2}(Q)} \\
        & \leq \frac{4}{\eps^{2}}  \|(f,g) - (u,v)\|^{2}_{L^{2}_{\oplus}},
    \end{align*}
    as desired. 
\end{proof}

We can now conclude the convergence of $ (f_n, g_n)$ to $(f_*, g_*)$ in $\Banach$.

\begin{lemma}\label{lemma:consistency}
Let $\eta\in (0, \eps)$, then
    $\| (f_n, g_n)-(f_*, g_*)\|_{\Banach}\to 0.$ 
\end{lemma}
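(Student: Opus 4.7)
My plan is a standard Arzel\`a--Ascoli compactness argument combined with monotonicity of the dual objective $\Gamma$ and uniqueness of its maximizer from \cref{le:Schrodingersystem}.

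First, combining \cref{Lemma:representationExplicit2,lemma:modulus-cont} and fixing representatives satisfying $\int f_n\,dP=\int g_n\,dQ$, the iterates $(f_n,g_n)$ are uniformly bounded in $L^\infty$ and uniformly Lipschitz on the compact domains $\Omega,\Omega'$. By Arzel\`a--Ascoli, every subsequence has a sub-subsequence converging uniformly to some continuous limit $(f_\infty,g_\infty)\in\mathcal{C}(\Omega)\times\mathcal{C}(\Omega')$. Moreover, since ${\rm D}\Gamma$ is $(2/\eps)$-Lipschitz in $L^2_\oplus$ (\cref{rk:ComparisonWith2overL}) and $\eta\leq\eps$, the standard descent lemma yields
\[
\Gamma(f_{n+1},g_{n+1})\geq \Gamma(f_n,g_n)+\eta\bigl(1-\tfrac{\eta}{\eps}\bigr)\|{\rm D}\Gamma(f_n,g_n)\|_{L^2_\oplus}^2\geq \Gamma(f_n,g_n).
\]
Boundedness above by $\Gamma(f_*,g_*)$ then forces convergence of $\Gamma(f_n,g_n)$.

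To identify the subsequential limit, I would extract a further sub-subsequence along which $(f_{n_k+1},g_{n_k+1})$ also converges uniformly, necessarily to $T(f_\infty,g_\infty):=(f_\infty,g_\infty)+\eta\,{\rm D}\Gamma(f_\infty,g_\infty)$ by continuity of the gradient-descent operator on $\mathcal{C}_\oplus$. Continuity of $\Gamma$ (via dominated convergence) gives $\Gamma(T(f_\infty,g_\infty))=\Gamma(f_\infty,g_\infty)$. In case (i), where $\eta<\eps$, summing the strict descent inequality yields $\sum_n\|{\rm D}\Gamma(f_n,g_n)\|_{L^2_\oplus}^2<\infty$, hence $\|{\rm D}\Gamma(f_\infty,g_\infty)\|_{L^2_\oplus}=0$. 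Since this gradient is a continuous function of $(x,y)$ on a compact domain when $(f_\infty,g_\infty)$ is continuous, it vanishes pointwise, so $(f_\infty,g_\infty)$ solves the Schr\"odinger system \eqref{Schrodinger} and equals $(f_*,g_*)$ in $\mathcal{C}_\oplus$ by \cref{le:Schrodingersystem}. The standard subsequential argument then upgrades this to convergence of the full sequence.

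The main obstacle lies in case (ii), where $\eta=\eps$ and the descent lemma degenerates: the multiplier $1-\eta/\eps$ vanishes and the above argument does not force ${\rm D}\Gamma(f_\infty,g_\infty)=0$ by itself. Here I would invoke the assumption $\spt\pi_*\neq\Omega\times\Omega'$, which via \cref{le:fullSupport} implies $(P\otimes Q)(\mathcal{E})<1$. This plays a role analogous to the $\alpha=-1$ case in the proof of \cref{pr:contractionExplicit}: one refines the descent along the gradient direction by observing that, thanks to the positive-part nonlinearity, the one-dimensional restriction $s\mapsto\Gamma\bigl((f_\infty,g_\infty)+s\,{\rm D}\Gamma(f_\infty,g_\infty)\bigr)$ is not a pure worst-case parabola on the slack region $\mathcal{E}^c$, so the equality $\Gamma(T(f_\infty,g_\infty))=\Gamma(f_\infty,g_\infty)$ still forces ${\rm D}\Gamma(f_\infty,g_\infty)=0$. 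Making this refinement precise is the technical heart of the argument.
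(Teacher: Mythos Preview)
Your strategy is more careful than the paper's. The paper's own proof is three lines: Arzel\`a--Ascoli produces a subsequential limit $(f_\infty,g_\infty)$, and then the fixed-point equation ${\rm D}\Gamma(f_\infty,g_\infty)=0$ is asserted ``by the continuity of ${\rm D}\Gamma$'' and \eqref{GradientDescent}. This glosses over precisely the point you work to address---convergence of $(f_{n_k},g_{n_k})$ along a subsequence does not by itself force $(f_{n_k+1},g_{n_k+1})$ to share the same limit---and your descent-lemma argument for $\eta<\eps$ fills that gap cleanly. So in case (i) your route is genuinely different and more rigorous, at the cost of introducing the monotonicity machinery that the paper omits.

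For case (ii) there is a concrete obstacle you may not have anticipated. Your plan invokes $(P\otimes Q)(\mathcal{E})<1$, but $\mathcal{E}$ is the super-level set of $f_*\oplus g_*-c$, whereas the slack in the one-dimensional restriction $s\mapsto\Gamma\bigl((f_\infty,g_\infty)+s\,{\rm D}\Gamma(f_\infty,g_\infty)\bigr)$ lives on the super-level set of $f_\infty\oplus g_\infty-c$. These sets are not known to coincide until \emph{after} you have proved $(f_\infty,g_\infty)=(f_*,g_*)$, so the appeal to \cref{le:fullSupport} is circular as written. A sharper equality analysis of $\Gamma(T(f_\infty,g_\infty))=\Gamma(f_\infty,g_\infty)$ yields only ${\rm D}\Gamma(f_\infty,g_\infty)=(a,a)$ for some constant $a$, together with $(P\otimes Q)\{f_\infty\oplus g_\infty-c<0\}=0$ whenever $a\neq0$; closing the loop from this back to $\spt\pi_*\neq\Omega\times\Omega'$ requires an additional argument. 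You are, however, right that some such hypothesis is essential: for $\eta=\eps$ and $f_*\oplus g_*-c>0$ strictly, the initialization $(f_0,g_0)=(f_*+a,g_*+a)$ with small $a\neq0$ produces a $2$-periodic orbit, so the lemma as stated (only $\eta\in(0,\eps]$, with no support restriction) appears slightly too strong.
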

\begin{proof}
  In view of \cref{Lemma:representationExplicit2,lemma:modulus-cont}, the Arzel\`a--Ascoli theorem shows that there exists a subsequence $(f_{n_{k}}, g_{n_{k}})$ converging to a limit $(f_\infty, g_\infty)$ in $\mathcal{C}_\oplus$. 
  
  Recalling \cref{le:lipschitzgradient}, ${\rm D}\Gamma$ is Lipschitz with constant $2/\eps$, which by a classical result of convex optimization (see line~6 in the proof of \cite[Theorem~2.1.14]{Nesterov.04}) implies
  \begin{align*}
       \tilde\eta \,\| {\rm D}\Gamma(f_{n},g_{n}) \|^{2}_{L^{2}_\oplus} \leq  \Gamma(f_{n+1}, g_{n+1}) - \Gamma(f_{n},g_{n})
  \end{align*}
  for $\tilde\eta:=\eta \left( 1 - \frac{\eta}{\eps} \right)>0$.
  Summing over $0 \leq n \leq N$, we deduce for any $N\in\mathbb{N}$ that
  \begin{align*}
      \sum_{n=0}^{N} \| {\rm D}\Gamma(f_{n},g_{n}) \|^{2}_{L^{2}_\oplus} \leq \frac{1}{\tilde\eta} (\Gamma(f_{N+1}, g_{N+1}) - \Gamma(f_{0},g_{0})) \leq \frac{1}{\tilde{\eta}} (\Gamma(f_*, g_*) - \Gamma(f_{0},g_{0})) < \infty. 
  \end{align*}
  It follows that ${\rm D}\Gamma(f_{n},g_{n}) \to 0$ in $L^{2}_{\oplus}$. By continuity of ${\rm D}\Gamma$, this implies ${\rm D}\Gamma(f_{\infty},g_{\infty}) = 0$, meaning that $(f_\infty, g_\infty)$ solves \cref{Schrodinger}. By \cref{le:Schrodingersystem}, $(f_*,g_*)$ is the unique solution of \cref{Schrodinger}, and the claim follows.
\end{proof}

We can now prove the main technical result of this section, namely the convergence $\mathbb{L}_{n} \to\mathbb{L}$ as operators on $L^{2}_{\oplus}$.

\begin{proposition}\label{pr:convergenceOperatorsExplicit}
Let \cref{Assumptions:GD} hold. We have
 $\|\mathbb{L}_{n} -\mathbb{L}\|_{\mathrm{op}}\to 0.$
\end{proposition}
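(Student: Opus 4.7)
The plan is to recast $\mathbb{L}_n-\mathbb{L}$ as an integral-type operator with a kernel that vanishes in an appropriate $L^2$-sense, and then bound its operator norm via its self-adjoint bilinear form. First I would invoke \cref{lemma:consistency} to record $\delta_n := \max(\|f_n-f_*\|_\infty,\|g_n-g_*\|_\infty)\to 0$, so that, writing $\Phi(x,y):=f_*(x)+g_*(y)-\tfrac12\|x-y\|^2$ and defining $\Phi_n$ analogously from $(f_n,g_n)$, we have $\|\Phi_n-\Phi\|_\infty\le 2\delta_n$.

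Next I would introduce the scalar kernels $K(x,y):=\mathbb{I}_{\{\Phi\ge 0\}}$ and $K_n(x,y):=\int_0^1 \mathbb{I}_{\{\lambda\Phi+(1-\lambda)\Phi_n\ge 0\}}\,d\lambda$, and rewrite $\mathbb{M}$ and its analogue $\mathbb{M}_n$ appearing in $\mathbb{L}=\mathbb{I}-\tfrac{\eta}{\eps}\proj_\oplus\mathbb{M}$ and $\mathbb{L}_n=\mathbb{I}-\tfrac{\eta}{\eps}\proj_\oplus\mathbb{M}_n$ in these terms. A short sign-analysis of $\lambda\Phi+(1-\lambda)\Phi_n$ on each of the cases $\Phi>0,\Phi<0,\Phi=0$ yields the crucial envelope
\[
|K_n-K|(x,y)\le \mathbb{I}_{\{|\Phi(x,y)|\le 2\delta_n\}}.
\]
Repeating the symmetry argument of \cref{le:LselfAdjoint} for $\mathbb{M}_n$ (the kernel $K_n$ is symmetric under swapping $(x,y)$ inside the $\lambda$-integral), $\mathbb{L}_n-\mathbb{L}$ is self-adjoint on $L^2_\oplus$, so $\|\mathbb{L}_n-\mathbb{L}\|_{\mathrm{op}}$ equals the supremum of the quadratic form. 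A computation parallel to the one in the proof of \cref{lemma:compact2} gives
\[
\langle (\mathbb{L}_n-\mathbb{L})(f,g),(f,g)\rangle_{L^2_\oplus}
=-\tfrac{\eta}{\eps}\int (K_n-K)(x,y)\bigl(f(x)+g(y)\bigr)^2\,d(P\otimes Q)(x,y),
\]
so it suffices to show
$\sup_{\|(f,g)\|_{L^2_\oplus}=1}\int_{\{|\Phi|\le 2\delta_n\}}(f+g)^2\,d(P\otimes Q)\to 0.$

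The analytical heart of the argument is the uniform measure estimate $\sup_{x\in\Omega}Q(\{y:|\Phi(x,y)|\le\delta\})\to 0$ as $\delta\to 0$. This follows from the $Q$-negligibility of $\mathcal{N}_x$ established in \cref{lemma:QOT-reg}(i), the compactness of $\Omega$, and a reverse-Fatou argument: if a violating sequence $x_n\to x^*$ existed, the sets $B_n:=\{y:|\Phi(x_n,y)|\le\delta_n\}$ would satisfy $\limsup_n B_n\subset\mathcal{N}_{x^*}$ by continuity of $\Phi$, and $\limsup_n Q(B_n)\le Q(\limsup_n B_n)=0$, a contradiction. Expanding $(f+g)^2=f^2+2fg+g^2$ and using Fubini: the $f^2$ term is bounded by $\|f\|^2\sup_x Q(A_x)$, and the cross term $2fg$ by $2\|f\|\|g\|\sqrt{\sup_x Q(A_x)}$ via Cauchy–Schwarz on the $y$-integration, both of which vanish uniformly on the unit ball by the estimate above.

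The main obstacle is the $g^2$ contribution, which a priori requires the symmetric estimate $\sup_y P(\{x:|\Phi(x,y)|\le\delta\})\to 0$ that is \emph{not} directly available, since \cref{assumption} imposes no regularity on $P$. The plan is to exploit that, by the same concavity/convexity argument used in \cref{lemma:QOT-reg}(i), $\mathcal{T}_y$ is the trace on $\Omega$ of a convex set $C'_y$ depending continuously on $y$, and to combine this structure with the $P\otimes Q$-negligibility of $\{\Phi=0\}$ (Fubini plus $Q(\mathcal{N}_x)=0$ for all $x$) and the continuity properties of the potentials $(f_*,g_*)$ from \cref{le:Schrodingersystem} to control $\int g^2\,\mathbb{I}_{\{|\Phi|\le\delta_n\}}\,d(P\otimes Q)$ uniformly on the $L^2_\oplus$-unit ball. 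Once this remaining uniform bound is in place, summing the three contributions gives $\|\mathbb{L}_n-\mathbb{L}\|_{\mathrm{op}}\to 0$, which combined with $\|\mathbb{L}\|_{\mathrm{op}}<1$ from \cref{pr:contractionExplicit} completes the proof of \cref{Theorem:Explicit}.
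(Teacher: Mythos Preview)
Your setup (the kernels $K,K_n$, the envelope $|K_n-K|\le\mathbb{I}_{\{|\Phi|\le 2\delta_n\}}$, the self-adjointness of $\mathbb{L}_n-\mathbb{L}$, and the quadratic-form identity) is correct, and your reverse-Fatou argument for $\sup_x Q(\{|\Phi(x,\cdot)|\le\delta\})\to 0$ is clean. But the route through the quadratic form hits a genuine wall at the $g^2$ term that your proposed workaround cannot clear. What you need there is $\sup_{\|g\|_{L^2(Q)}\le 1}\int g(y)^2\,\psi_n(y)\,dQ(y)\to 0$ for $\psi_n(y):=P(\{x:|\Phi(x,y)|\le 2\delta_n\})$, and this supremum equals $\|\psi_n\|_{L^\infty(Q)}$. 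That quantity does \emph{not} vanish when $P$ has atoms, which \cref{assumption} explicitly permits (the semi-discrete case): if $P(\{x_0\})>0$ and $\mathcal{S}_{x_0}\subsetneq\Omega'$, then for every $n$ the band $\{y:|\Phi(x_0,y)|\le 2\delta_n\}$ is a nonempty open subset of $\Omega'$, hence of positive $Q$-measure, and on it $\psi_n(y)\ge P(\{x_0\})>0$. Convexity of $\mathcal{T}_y$, continuity of the potentials, and the $(P\otimes Q)$-negligibility of $\{\Phi=0\}$ are all beside the point here, because the operator norm of a multiplication operator on $L^2$ is the $L^\infty$ norm of the multiplier, and Fubini gives you only $L^1$ control of $\psi_n$.

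The paper avoids the quadratic form entirely and bounds $\|(\mathbb{L}_n-\mathbb{L})(f,g)\|_{L^2_\oplus}$ directly, component by component. The decisive difference is where Cauchy--Schwarz is applied: the paper uses it on the \emph{inner} integral, e.g.\ $\bigl(\int(K_n-K)(x,y)\,h(y)\,dQ(y)\bigr)^2\le\|h\|_{L^2(Q)}^2\int(K_n-K)^2(x,y)\,dQ(y)$, and only afterwards integrates in the outer variable. What remains is $\iint(K_n-K)^2\,d(P\otimes Q)$, a single number with no supremum over test functions, which tends to zero by dominated convergence using only $(P\otimes Q)(\{\Phi=0\})=0$ from \cref{lemma:QOT-reg}(i). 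By contrast, expanding $(f+g)^2$ in your quadratic form pushes the factor $g(y)^2$ outside the $P$-integral, where it is free to concentrate on the shrinking band and defeat the estimate.
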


\begin{proof}
     Comparing the definitions of $\mathbb{L}$ and $\mathbb{L}_n$ in~\eqref{eq:defL} and~\eqref{eq:defLn}, we see that $\|\mathbb{L}_{n} -\mathbb{L}\|_{\mathrm{op}}\to 0$ is implied by the two limits 
    \begin{equation}\label{eq:LinftyP-first-term-Ln}
 \left\|
 [\mathcal{L}_1\otimes Q](\mathcal{S}_{n,(\cdot)})
 -
 Q(\mathcal{S}_{(\cdot)})
 \right\|_{L^\infty(P)}
 \to 0,
\end{equation}
and
\begin{equation}\label{eq:LinftyP-second-term-Ln}
 \sup_{\|h\|_{L^2(Q)}\leq 1}
 \left\|
 \int_{\mathcal{S}_{n,(\cdot)}} h(y)\,
 d[\mathcal{L}_1\otimes Q](\lambda,y)
 -
 \int_{\mathcal{S}_{(\cdot)}} h(y)\,dQ(y)
 \right\|_{L^\infty(P)}
 \to 0,
\end{equation}
and the symmetric results for the second component. Clearly \cref{eq:LinftyP-second-term-Ln} implies \cref{eq:LinftyP-first-term-Ln} by specializing to $h\equiv1$, hence we focus on \cref{eq:LinftyP-second-term-Ln}. 
Write
\[
 \xi_*(x,y)
 :=
 f_*(x)+g_*(y)-\frac12\|x-y\|^2,
 \qquad
 \xi_n(x,y)
 :=
 f_n(x)+g_n(y)-\frac12\|x-y\|^2 .
\]
Set
\[
 r_n
 :=
 \sup_{x\in\Omega,\ y\in\Omega'}
 |\xi_n(x,y)-\xi_*(x,y)|.
\]
We first note that \(r_n\to0\). Indeed, for every \(a\in\mathbb R\),
\[
\begin{aligned}
 |(f_n-f_*)(x)+(g_n-g_*)(y)|
 &\leq
 \|f_n-f_*+a\|_\infty+\|g_n-g_*-a\|_\infty .
\end{aligned}
\]
Taking infimum over \(a\) and then supremum over
\((x,y)\) gives
$
 r_n
 \leq
 \|(f_n,g_n)-(f_*,g_*)\|_{\mathcal C_\oplus},
$
which tends to zero by \cref{lemma:consistency}.

For \(x\in\Omega\) and \(y\in\Omega'\), define
\[
 m_n(x,y)
 :=
 \mathcal{L}_1
 \left\{
 \lambda\in[0,1]:
 \lambda \xi_*(x,y)+(1-\lambda)\xi_n(x,y)\geq0
 \right\},
 \qquad
 m(x,y):=\mathbb{I}_{\{\xi_*(x,y)\geq0\}} .
\]
By Fubini's theorem,
\[
 [\mathcal{L}_1\otimes Q](\mathcal{S}_{n,x})
 =
 \int m_n(x,y)\,dQ(y),
 \qquad
 Q(\mathcal{S}_x)
 =
 \int m(x,y)\,dQ(y),
\]
and, for \(h\in L^2(Q)\),
\[
 \int_{\mathcal{S}_{n,x}} h(y)\,
 d[\mathcal{L}_1\otimes Q](\lambda,y)
 =
 \int h(y)m_n(x,y)\,dQ(y),
 \quad
 \int_{\mathcal{S}_x}h(y)\,dQ(y)
 =
 \int h(y)m(x,y)\,dQ(y).
\]

Next, we show that 
\begin{equation}\label{eq:mn-m-bound}
 |m_n(x,y)-m(x,y)|
 \leq
 \mathbb{I}_{\{|\xi_*(x,y)|\leq r_n\}} .
\end{equation}
Indeed, if \(\xi_*(x,y)>r_n\), then \(\xi_n(x,y)>0\), and hence
\[
 \lambda \xi_*(x,y)+(1-\lambda)\xi_n(x,y)>0
 \qquad\text{for all }\lambda\in[0,1].
\]
Thus \(m_n(x,y)=m(x,y)=1\). Similarly, if \(\xi_*(x,y)<-r_n\), then
\(\xi_n(x,y)<0\), so \(m_n(x,y)=m(x,y)=0\). Therefore the two kernels can differ only on
\(\{|\xi_*|\leq r_n\}\), proving \eqref{eq:mn-m-bound}.

We next prove the uniform boundary-layer estimate
\begin{equation}\label{eq:uniform-boundary-layer-Q}
 \omega_Q(r)
 :=
 \sup_{x\in\Omega}
 Q\{y\in\Omega':|\xi_*(x,y)|\leq r\}
 \to0
 \qquad\text{as }r\downarrow0 .
\end{equation}
Suppose not. Then there exist \(\varepsilon_0>0\), \(r_k\downarrow0\), and
\(x_k\in\Omega\) such that
\[
 Q\{y\in\Omega':|\xi_*(x_k,y)|\leq r_k\}\geq \varepsilon_0
 \qquad\text{for all }k.
\]
By compactness of \(\Omega\), after passing to a subsequence we may assume that
\(x_k\to x\in\Omega\). Since \(\xi_*\) is continuous on the compact set
\(\Omega\times\Omega'\), it is uniformly continuous, and therefore
\[
 \delta_k
 :=
 \sup_{y\in\Omega'}
 |\xi_*(x_k,y)-\xi_*(x,y)|
 \to0 .
\]
Hence
\[
 \{y\in\Omega':|\xi_*(x_k,y)|\leq r_k\}
 \subset
 \{y\in\Omega':|\xi_*(x,y)|\leq r_k+\delta_k\}.
\]
It follows that, for every \(\delta>0\) and all sufficiently large \(k\),
\[
 Q\{y\in\Omega':|\xi_*(x,y)|\leq\delta\}\geq \varepsilon_0 .
\]
Letting \(\delta\downarrow0\) and using continuity from above of \(Q\), we obtain
\[
 Q\{y\in\Omega':\xi_*(x,y)=0\}\geq \varepsilon_0,
\]
which contradicts \cref{lemma:QOT-reg}~(i). This proves
\eqref{eq:uniform-boundary-layer-Q}.

Let \(h\in L^2(Q)\) with \(\|h\|_{L^2(Q)}\leq1\). For every
\(x\in\Omega\), the Cauchy--Schwarz inequality and~\eqref{eq:mn-m-bound} yield
\[
\begin{aligned}
&\left|
 \int_{\mathcal{S}_{n,x}} h(y)\,
 d[\mathcal{L}_1\otimes Q](\lambda,y)
 -
 \int_{\mathcal{S}_x} h(y)\,dQ(y)
 \right|  \\
&\qquad =
 \left|
 \int h(y)(m_n(x,y)-m(x,y))\,dQ(y)
 \right| \\
&\qquad \leq
 \|h\|_{L^2(Q)}
 \left(
 \int |m_n(x,y)-m(x,y)|^2\,dQ(y)
 \right)^{1/2} \\
&\qquad \leq
 Q\{y\in\Omega':|\xi_*(x,y)|\leq r_n\}^{1/2}
 \leq
 \omega_Q(r_n)^{1/2}.
\end{aligned}
\]
Taking first the \(P\)-essential supremum in \(x\), and then the supremum over
\(\|h\|_{L^2(Q)}\leq1\), gives
\[
 \sup_{\|h\|_{L^2(Q)}\leq 1}
 \left\|
 \int_{\mathcal{S}_{n,(\cdot)}} h(y)\,
 d[\mathcal{L}_1\otimes Q](\lambda,y)
 -
 \int_{\mathcal{S}_{(\cdot)}} h(y)\,dQ(y)
 \right\|_{L^\infty(P)}
 \leq
 \omega_Q(r_n)^{1/2}
 \to0. \qedhere
\]
\end{proof}

\begin{corollary}\label{co:mainRes}
Let \cref{Assumptions:GD} hold and  $\delta_*>\|\mathbb{L} \|_{\mathrm{op}}$. Then there exists $n_0\in  \mathbb{N}$ such that $\|\mathbb{L}_n \|_{\mathrm{op}} \leq \delta_*$ for all $n\geq n_0$.
\end{corollary}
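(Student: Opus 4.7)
The plan is essentially a one-line consequence of the two main propositions already proved in \cref{section:uniform-linearized,Section:linear-GD}. By \cref{pr:contractionExplicit}, $\|\mathbb{L}\|_{\mathrm{op}} < 1$, so I would fix any $\delta_* \in (\|\mathbb{L}\|_{\mathrm{op}}, 1)$; a concrete choice is
$$\delta_* := \tfrac{1}{2}\bigl(1 + \|\mathbb{L}\|_{\mathrm{op}}\bigr).$$
Then $\delta_* - \|\mathbb{L}\|_{\mathrm{op}} > 0$, and by \cref{pr:convergenceOperatorsExplicit} there exists $n_0 \in \mathbb{N}$ such that $\|\mathbb{L}_n - \mathbb{L}\|_{\mathrm{op}} \leq \delta_* - \|\mathbb{L}\|_{\mathrm{op}}$ for all $n \geq n_0$.

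Applying the triangle inequality in the Banach space of bounded linear operators on $L^2_\oplus$, we obtain
$$\|\mathbb{L}_n\|_{\mathrm{op}} \leq \|\mathbb{L}\|_{\mathrm{op}} + \|\mathbb{L}_n - \mathbb{L}\|_{\mathrm{op}} \leq \delta_*$$
for every $n \geq n_0$, which is the assertion. Iterating the identity \eqref{eq:GDwrittenWithLn} from \cref{Lemma:representationExplicit} then yields $\|(f_n,g_n) - (f_*,g_*)\|_{L^2_\oplus} \leq \delta_*^{\,n - n_0} \|(f_{n_0}, g_{n_0}) - (f_*,g_*)\|_{L^2_\oplus}$, which is the form of linear convergence stated in \cref{Theorem:Explicit} (up to absorbing the initial factor by further enlarging $\delta_*$ or $n_0$). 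There is no genuine obstacle at this step: all the substantive work was done in showing $\|\mathbb{L}\|_{\mathrm{op}} < 1$ (the spectral analysis based on self-adjointness, compactness of $\mathbb{A}_1,\mathbb{A}_2$, and the case analysis ruling out eigenvalues $\pm 1$) and in establishing $\|\mathbb{L}_n - \mathbb{L}\|_{\mathrm{op}} \to 0$ (which rested on the consistency $(f_n,g_n) \to (f_*,g_*)$ in $\mathcal{C}_\oplus$ and the $Q$-negligibility of the zero set of $\xi_*$ from \cref{lemma:QOT-reg}).
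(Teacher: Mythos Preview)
Your proof is correct and follows exactly the same approach as the paper: combine \cref{pr:contractionExplicit} ($\|\mathbb{L}\|_{\mathrm{op}}<1$) with \cref{pr:convergenceOperatorsExplicit} ($\|\mathbb{L}_n-\mathbb{L}\|_{\mathrm{op}}\to0$) via the triangle inequality. The paper states this corollary without a separate proof, simply noting it follows by combining the two propositions; your version just makes the choice of $\delta_*$ and $n_0$ explicit.
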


Taken together, \cref{pr:contractionExplicit,co:mainRes} show that $\mathbb{L}_n$ is a uniform contraction for $n\geq n_0$. In view of~\eqref{eq:GDwrittenWithLn}, it follows that $\|(f_n,g_n)-(f_*,g_*)\|\leq C\delta_*^n$, which is  \cref{Theorem:Explicit} after possibly redefining $n_0,\delta_*$.

\section{Numerical experiments}\label{section:numerical-examples}

In this section, we provide numerical experiments for the gradient ascent algorithm. The key quantity of interest is
$$ \Delta_n := \| (f_{n+1}, g_{n+1}) - (f_{n}, g_{n}) \|_{L^2_\oplus}.$$
On the one hand, we see from~\eqref{eq:algodescription} that
\begin{align*}
    \Delta_n = \frac{\eta}{\eps} \left\|\left(  \begin{array}{c}
     \eps-\int{  \left(f_n(\cdot)+ g_n(y)- \frac{1}{2}\|\cdot-y\|^2\right)_+ } dQ(y)\\
      \eps-\int{  \left(f_n(x)+ g_n(\cdot)- \frac{1}{2}\|x-\cdot\|^2\right)_+ } dP(x)    
\end{array} \right)\right\|_{L^2_\oplus}
\end{align*}
and hence $\Delta_n$ has a direct interpretation as $\eta/\eps$ times the $L^2$ norm of the constraint violation in~\eqref{Schrodinger}. This captures how far the measure with density $\frac{1}{\eps} \left(f_n(x)+ g_n(y)- \frac{1}{2}\|x-y\|^2\right)_+$ is from being a coupling of~$P$ and~$Q$. On the other hand, we will observe that $\Delta_n = O(\delta_*^n)$ for large $n$, for some  $\delta_*\in(0,1)$, implying that also $\| (f_n, g_n) - (f_*, g_*) \|_{L^2_\oplus} = O(\delta_*^n)$. This will confirm that \cref{Theorem:Explicit} accurately captures the convergence behavior.

\begin{figure}
    \centering
    \includegraphics[width=0.49\linewidth]{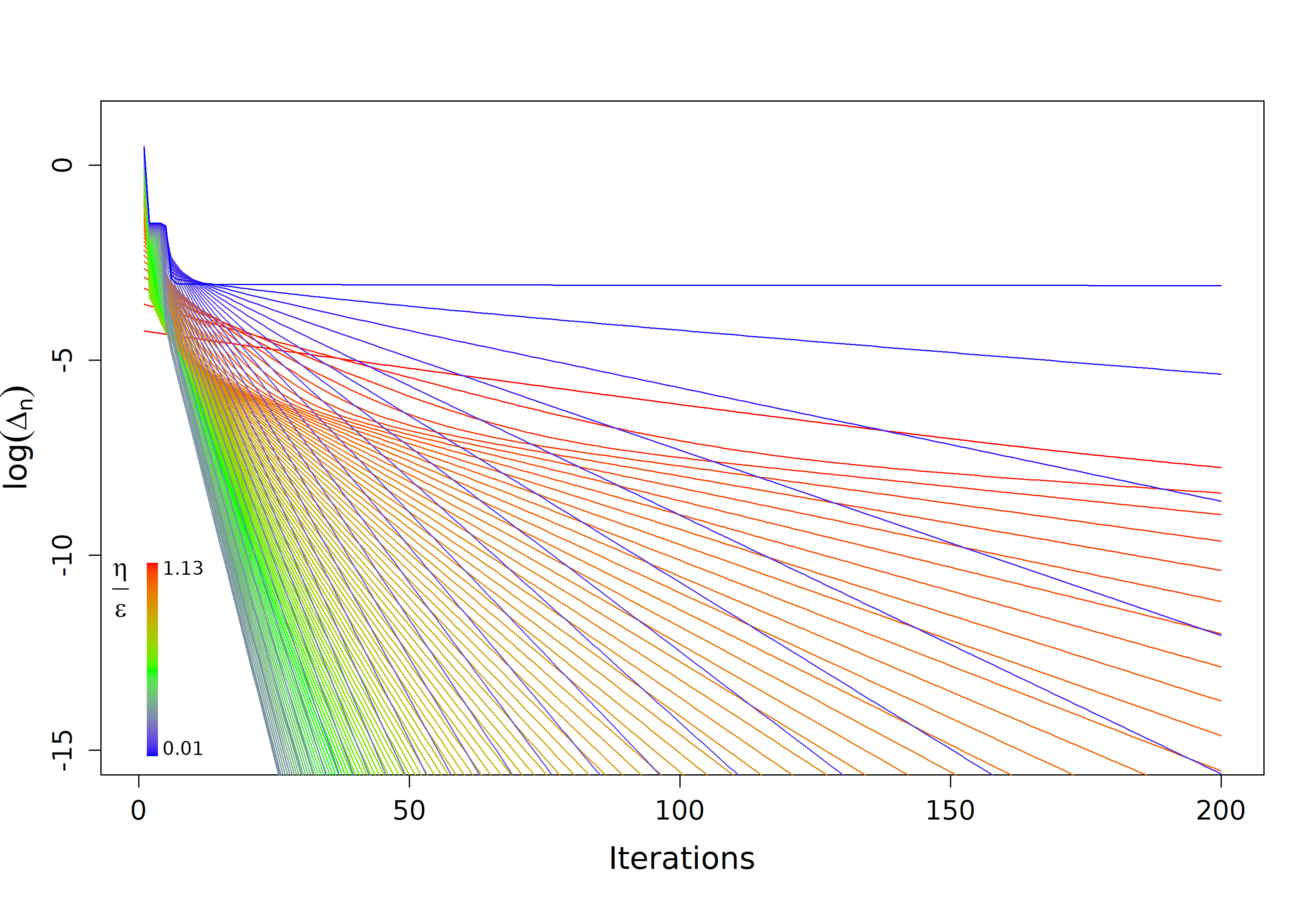}
    \includegraphics[width=0.49\linewidth]{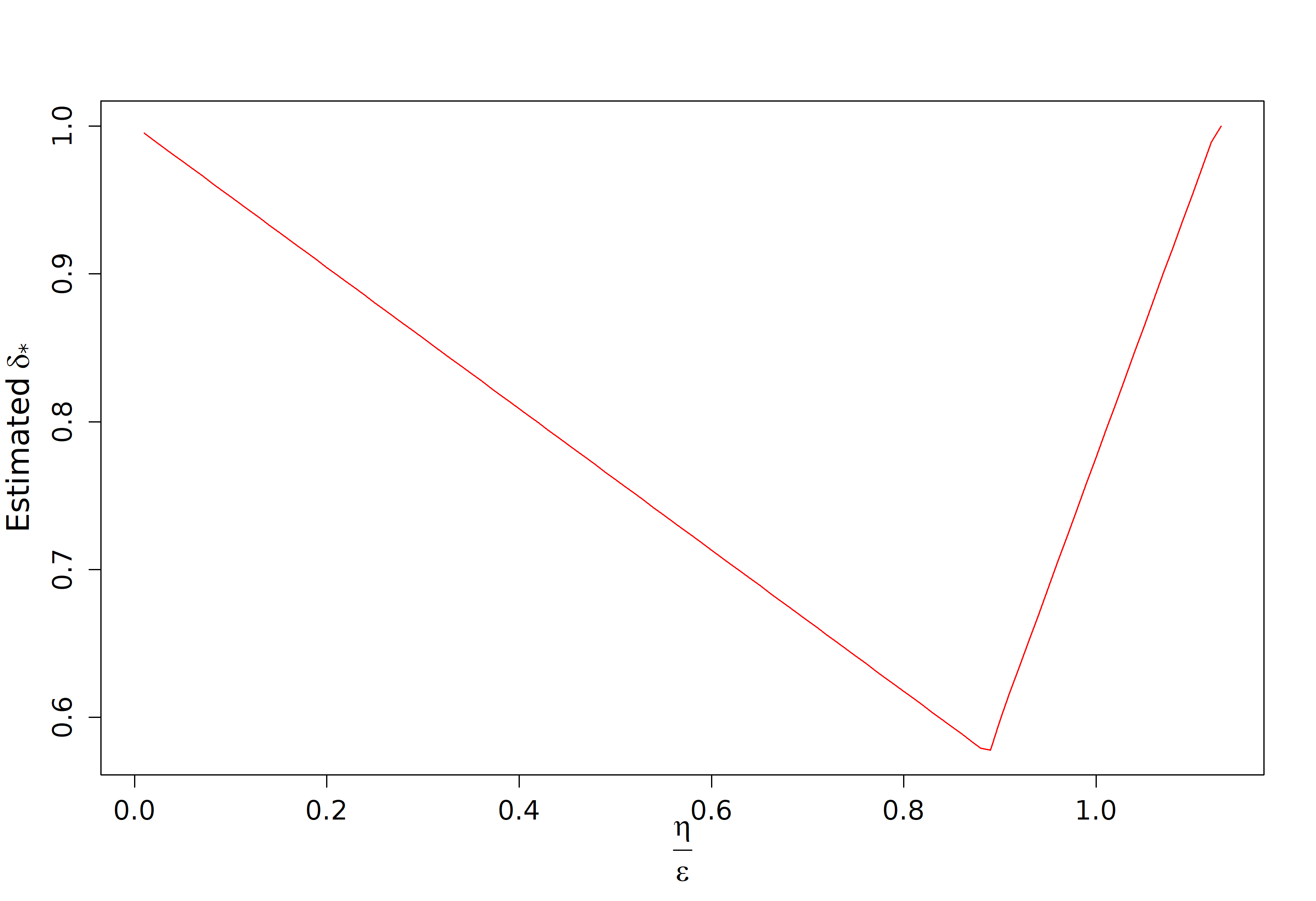}\\
    \includegraphics[width=0.49\linewidth]{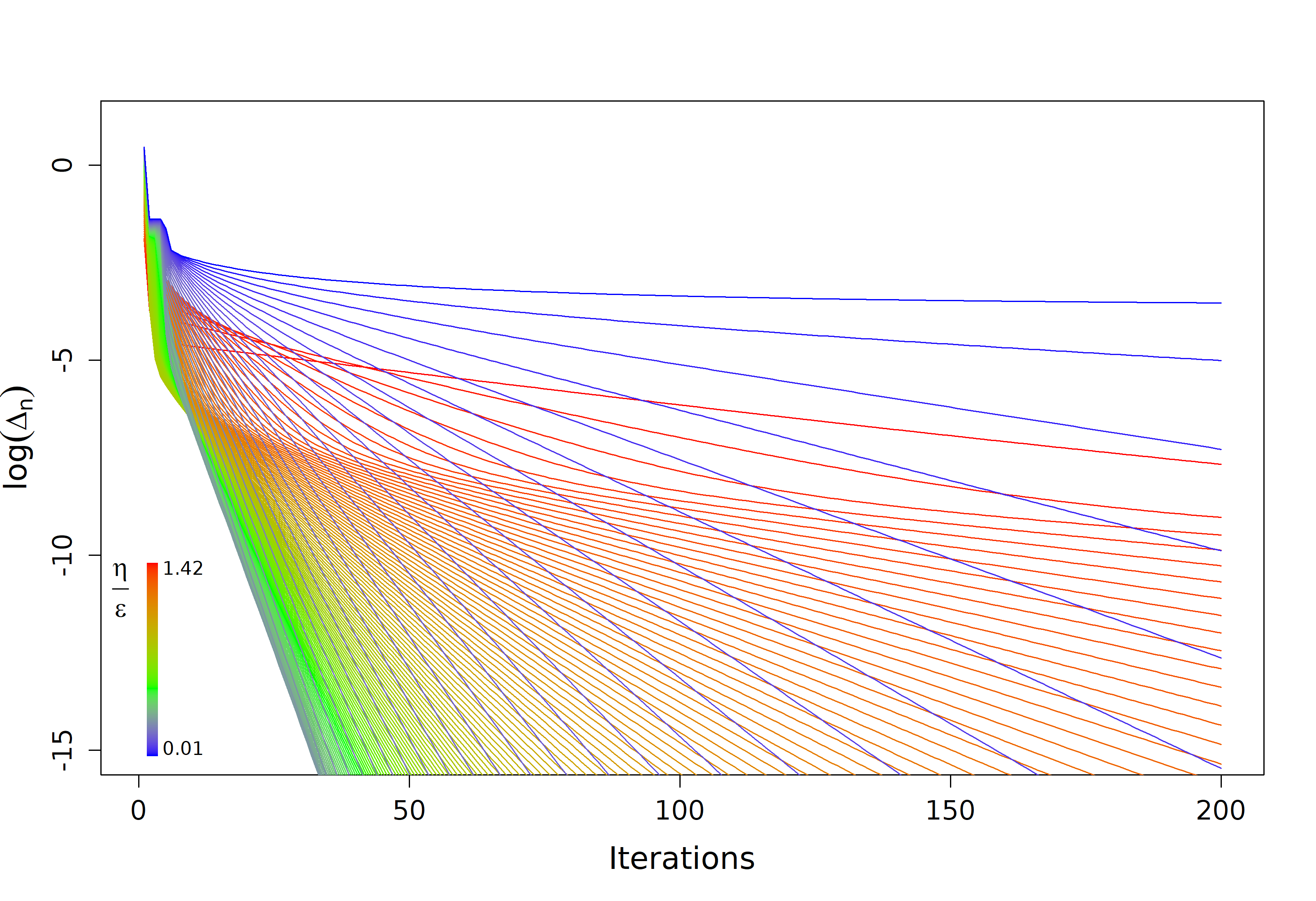}
    \includegraphics[width=0.49\linewidth]{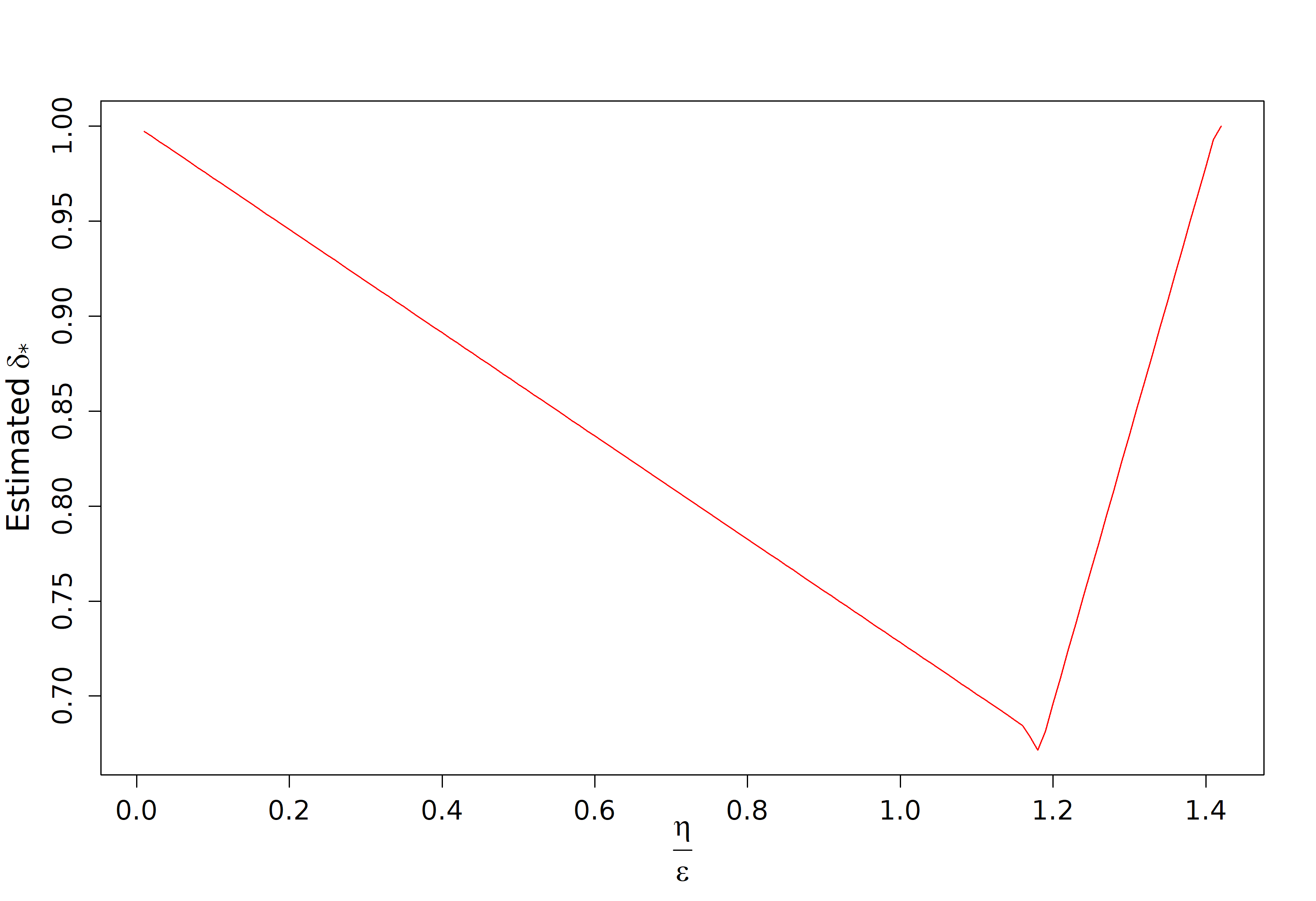}\\
    \includegraphics[width=0.49\linewidth]{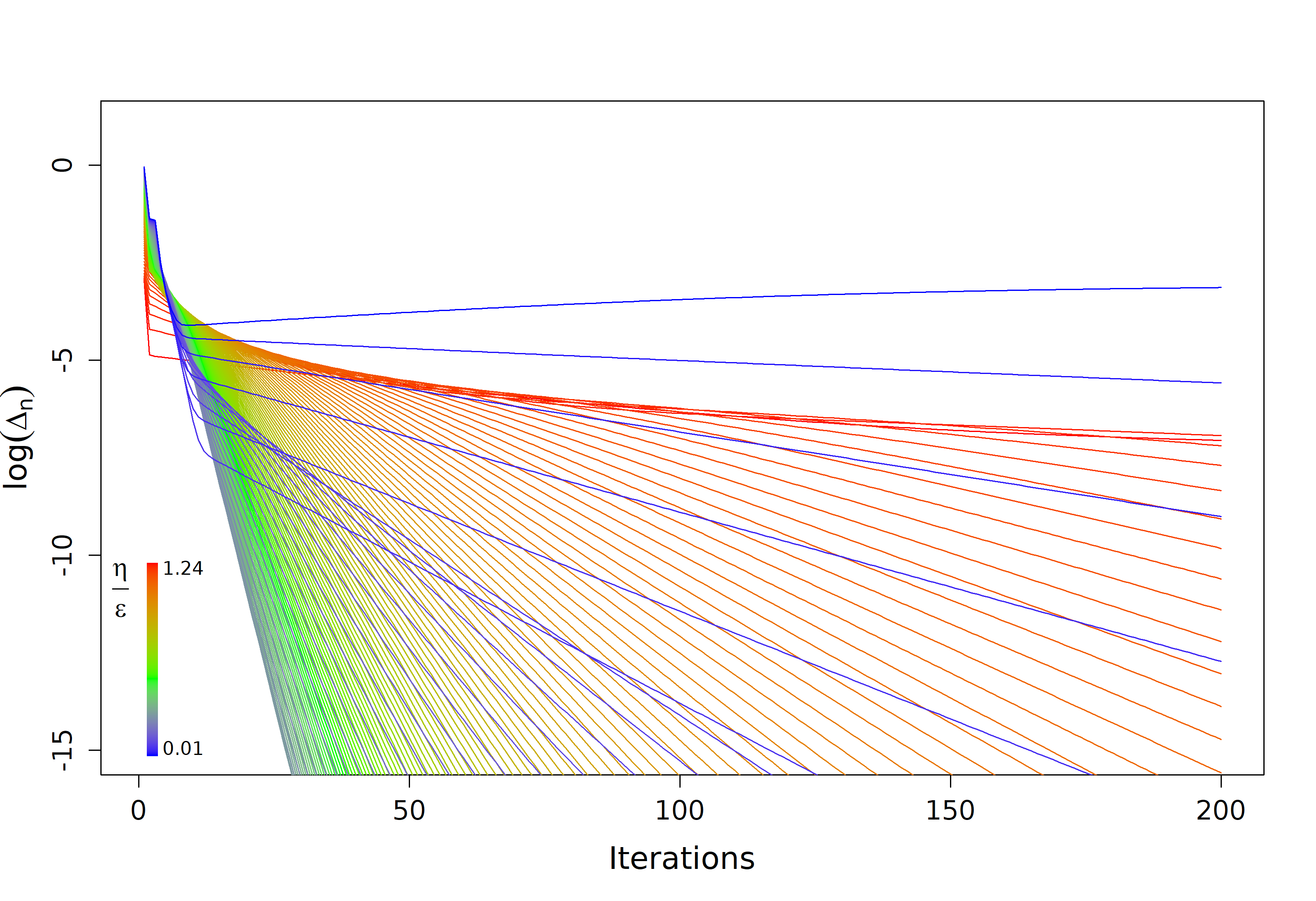}
    \includegraphics[width=0.49\linewidth]{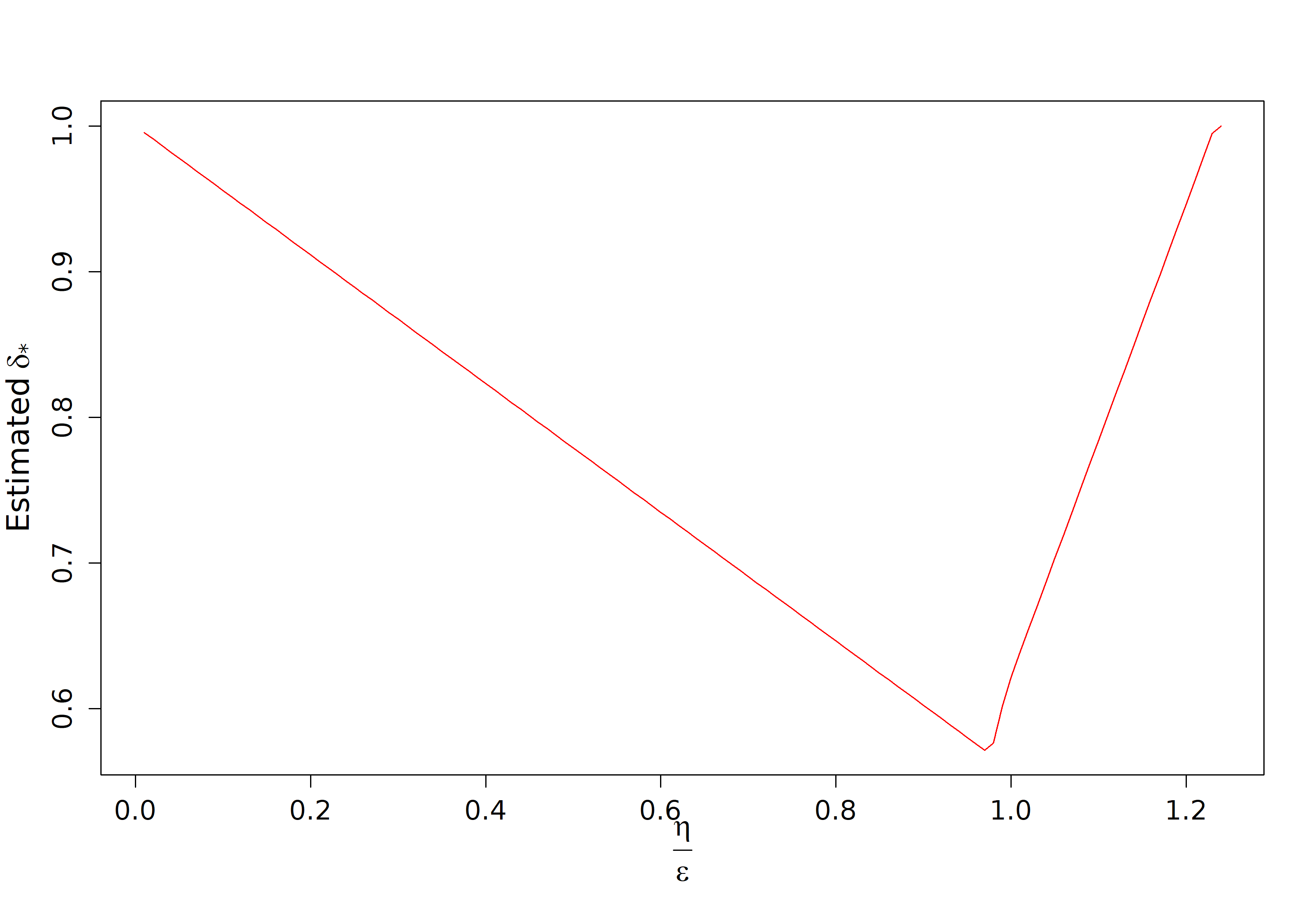}
    \caption{Gradient descent algorithm for three pairs of marginals: (top) $P = U[0,1]$ and $Q = U[0.5,1.5]$, (middle) $P = U[0,1]$ and $Q = \beta(0.1,0.2)$, (bottom) $P = U([0,1] \times [0,1])$ and $Q = U([1/\sqrt{2}, 1 + 1/\sqrt{2}] \times [1/\sqrt{2}, 1 + 1/\sqrt{2}])$. Left panels show the convergence for different step sizes $\eta$, right panels show the estimated $\delta_*$ for different $\eta$, both for fixed $\eps=10^{-1}$. Step size is increased until convergence breaks, which happens at $\eta/\eps= 1.13,\,1.42,\,1.24$, respectively.}
    \label{fig:experimentsDelta}
\end{figure}

The left panels in \cref{fig:experimentsDelta} plot $\log(\Delta_n)$ against~$n$ for three different pairs of marginals $(P,Q)$ and for numerous different step sizes $\eta$. 
The regularization parameter $\eps=10^{-1}$ and the initialization $f_{0} \equiv g_{0} \equiv 0.5$ are fixed. The gradient descent iteration is run until $\Delta_n\leq 10^{-10}$. The integrals in~\eqref{eq:gradientGamma} are approximated using the regular trapezoidal rule over a discretized mesh of $[-0.1, 1.6]$, with step size 0.001, giving approximately 1700 discretization points for the one-dimensional marginals, while the same integration procedure is applied on a mesh of $[-0.1,2] \times [-0.1, 2]$, with step size 0.05, to obtain approximately the same number of discretization points. 

For all step sizes $\eta <  \eps$, we observe a linear behavior after a burn-in period, confirming that $\Delta_n = O(\delta_*^n)$ where $\delta_*\in(0,1)$. We also show some step sizes with $\eta/\eps \geq 1$; specifically, we increase the step size until convergence breaks. In the three experiments, convergence breaks when $\eta/\eps$ reaches the values 1.13, 1.42, and 1.24. This suggests that the condition $\eta/\eps <  1$ in \cref{Assumptions:GD} is fairly sharp; see also \cref{rk:ComparisonWith2overL}.

The right panels in \cref{fig:experimentsDelta} show an estimate of $\delta_*$ for each step size. The estimate is obtained by a linear regression over $\log(\Delta_{N-k})$, $k=0,\dots,9$, where $N$ is the iteration where the convergence criterion is reached. We observe that $\delta_*$ depends substantially on $\eta/\eps$, with a distinct V-shape. 
Overall, the value of $\delta_*$ is quite small for a large range of step sizes (bounded away from zero and the break point), indicating fast convergence. As a caveat, we emphasize that the estimated~$\delta_*$ pertains to the particular trajectory of the algorithm which depends on the chosen initialization, meaning that the constant in \cref{Theorem:Explicit} could be worse.

\begin{figure}[h]
    \centering
    \includegraphics[width=0.495\linewidth]{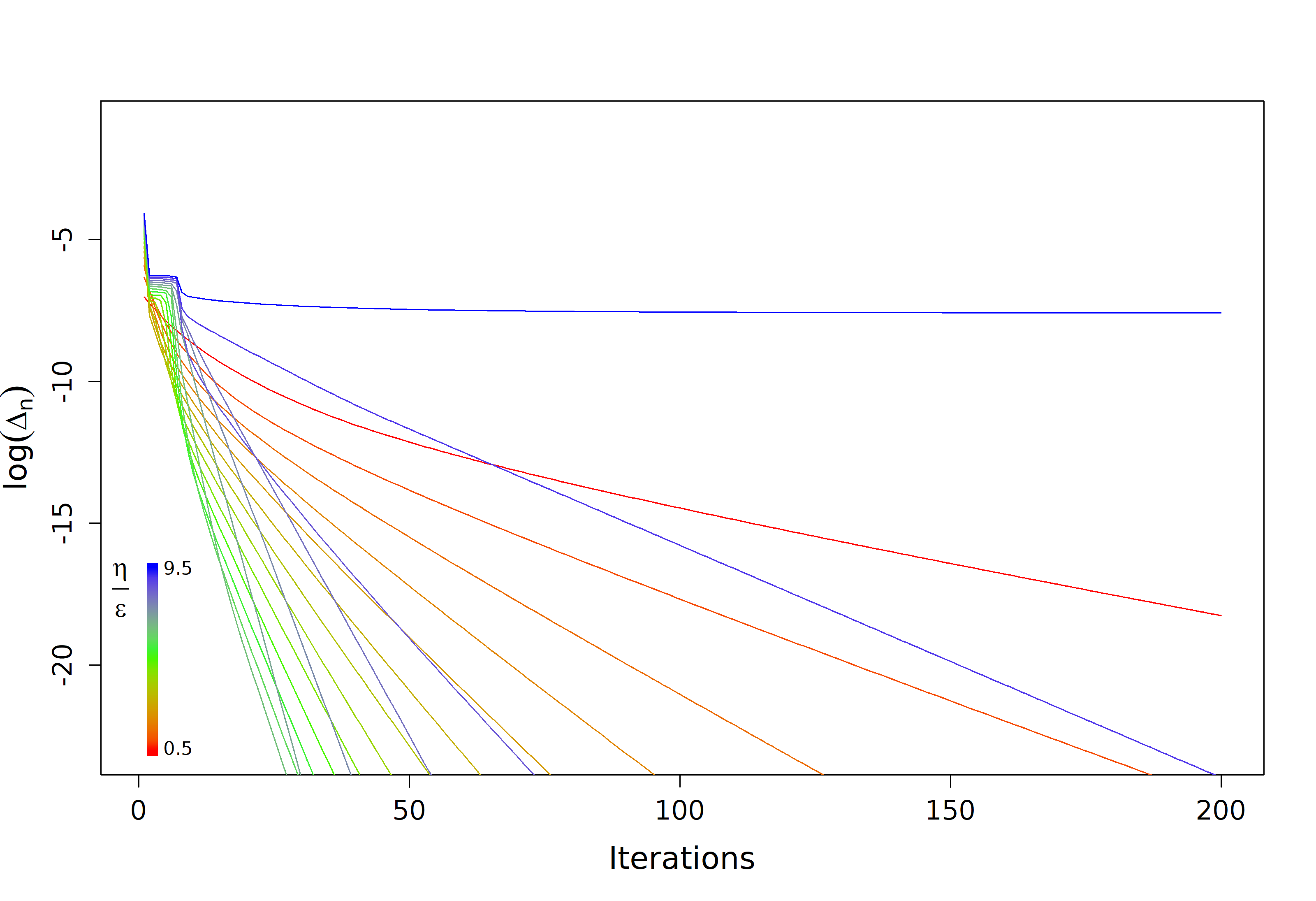}
    \includegraphics[width=0.495\linewidth]{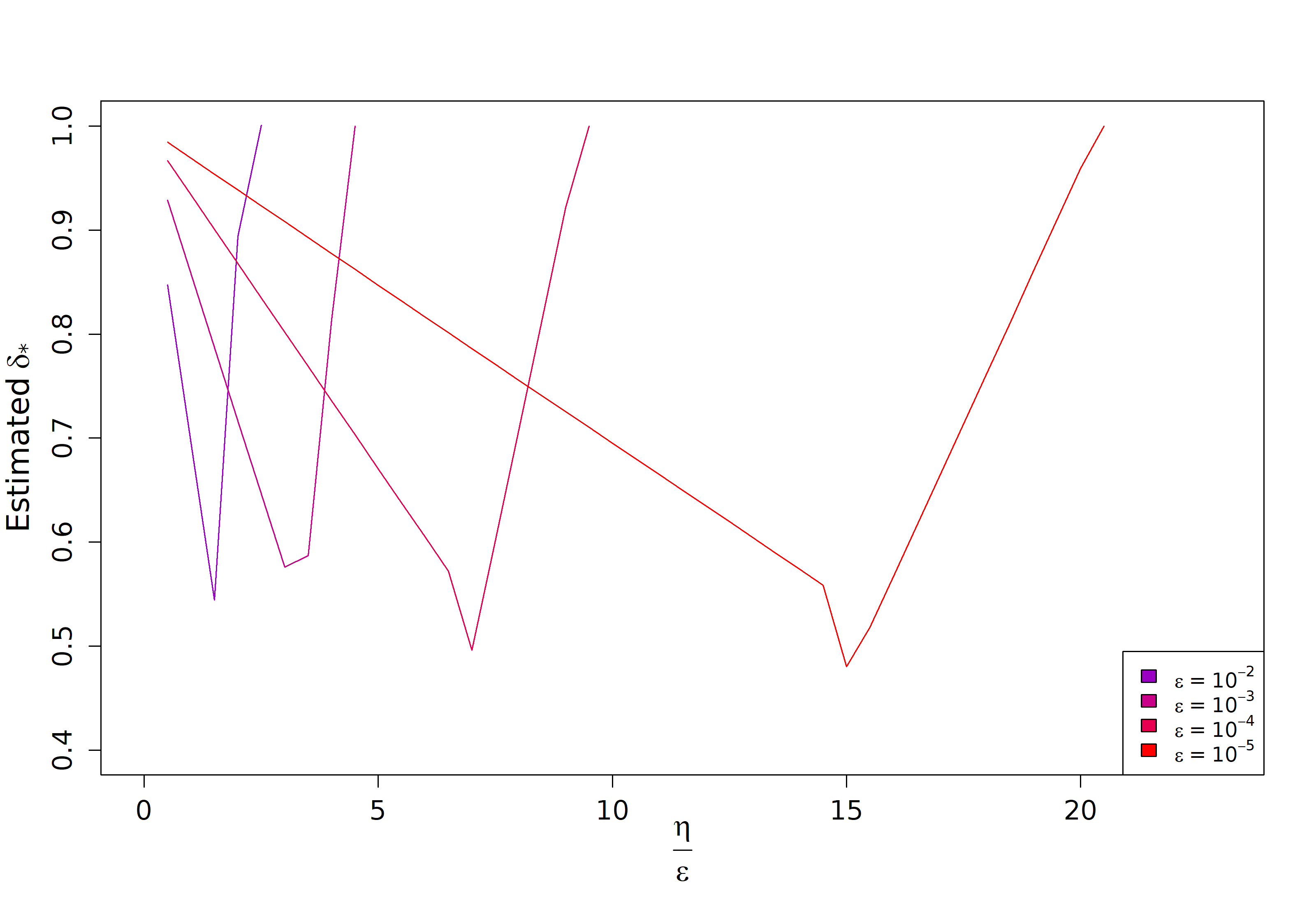}
    \caption{Repeating the first experiment (top row of \cref{fig:experimentsDelta}) with smaller values of $\eps$ and warm-start initialization. Left panel shows convergence for $\eps=10^{-4}$ and different step sizes~$\eta$. Right panel shows estimated $\delta_*$ for several values of $\eps$ and varying step size~$\eta$. Step size is increased until convergence breaks.}
    \label{fig:experimentsEpsilon_1}
\end{figure}

\Cref{fig:experimentsEpsilon_1} repeats the first experiment (i.e., the top row in \cref{fig:experimentsDelta}) but varies the regularization parameter $\eps$. In addition, the experiment for $\eps=10^{-k}$ is initialized with the potentials found for $\eps=10^{-k+1}$. Even so, a longer burn-in period is observed for smaller values of $\eps$ (for constant initialization, the burn-in period would be even longer). We observe that the break point as well as the optimal ratio $\eta/\eps$ increase as $\eps$ decreases, extending beyond \cref{Assumptions:GD}. Comparing with \cref{rk:ComparisonWith2overL}, a possible explanation is that for small values of $\eps$, the Lipschitz constant of the gradient ${\rm D}\Gamma$ is substantially smaller than $2/\eps$. Specifically, the positive part operator in~\eqref{eq:gradientGamma} implies that the integral is approximately taken only over the support of the optimal coupling. This support is conjectured to be sparse, with sections of diameter $\sim\eps^{\frac{1}{d+2}}$ (see \cite{GonzalezSanzNutz2024.Scalar,WieselXu.24}), which suggests a Lipschitz constant that shrinks with~$\eps$.

\paragraph{Acknowledgments} The authors thank Yifan Jiang for pointing out a mistake in an earlier version of the proof of \cref{lemma:consistency}.

\bibliographystyle{abbrv}
\bibliography{Biblio}

@article{combettes2018monotone,
  author  = {Combettes, Patrick L.},
  title   = {Monotone operator theory in convex optimization},
  journal = {Math. Program.},
  fjournal = {Mathematical Programming},
  series  = {Series B},
  volume  = {170},
  number  = {1},
  pages   = {177--206},
  year    = {2018},
  doi     = {10.1007/s10107-018-1303-3},
  url     = {https://doi.org/10.1007/s10107-018-1303-3}
}

@inproceedings{Pasechnyuk2023Algorithms,
  author    = {Pasechnyuk, Dmitry A. and Persiianov, Michael and Dvurechensky, Pavel and Gasnikov, Alexander},
  editor    = {Olenev, Nicholas and Evtushenko, Yuri and Ja{\'c}imovi{\'c}, Milojica and Khachay, Michael and Malkova, Vlasta},
  title     = {Algorithms for {E}uclidean-Regularised Optimal Transport},
  booktitle = {Optimization and Applications},
  series    = {Lecture Notes in Computer Science},
  volume    = {14395},
  pages     = {84--98},
  publisher = {Springer Nature Switzerland},
  address   = {Cham},
  year      = {2023},
  doi       = {10.1007/978-3-031-47859-8_7},
  isbn      = {978-3-031-47859-8}
}

@misc{gonzalezsanz2026localsparsity,
  author        = {Gonz{\'a}lez-Sanz, Alberto and Gvalani, Rishabh S. and Koch, Lukas},
  title         = {Sharp local sparsity of regularized optimal transport},
  year          = {2026},
  note        = {arXiv 2604.00843},
}

@article{GonzalezSanzNutz.26stability,
author={Alberto Gonz{\'a}lez-Sanz and Marcel Nutz},
title={Stability of Quadratically Regularized Optimal Transport},  
Journal = {In preparation},
year = {2026},
}

@article{GonzalezSanzNutzRiveros.26,
author={Alberto Gonz{\'a}lez-Sanz and Marcel Nutz and Riveros Valdevenito, Andr{\'e}s},
title={Polyak{--}{{\L{}}}ojasiewicz Inequality for Quadratically Regularized Optimal Transport},  
Journal = {In preparation},
year = {2026},
}

@unpublished{GonzalezSanzDelBarrioNutz.25,
  title     = {Sample Complexity of Quadratically Regularized Optimal Transport},
  author    = {Alberto Gonz{\'a}lez-Sanz and Eustasio del Barrio and Marcel Nutz},
  note      = {arXiv 2511.09807},
  year      = {2025},
}

@inproceedings{LiGenevayYurochkinSolomon.20,
 author = {Li, Lingxiao and Genevay, Aude and Yurochkin, Mikhail and Solomon, Justin},
 booktitle = {Advances in Neural Information Processing Systems},
 editor = {H. Larochelle and M. Ranzato and R. Hadsell and M.F. Balcan and H. Lin},
 pages = {17755--17765},
 publisher = {Curran Associates, Inc.},
 title = {Continuous Regularized {W}asserstein Barycenters},
 _url = {https://proceedings.neurips.cc/paper_files/paper/2020/file/cdf1035c34ec380218a8cc9a43d438f9-Paper.pdf},
 volume = {33},
 year = {2020}
}

@inproceedings{GeneveyEtAl.16,
title = {Stochastic Optimization for Large-scale Optimal Transport},
author = {Genevay, A. and Cuturi, M. and Peyr\'{e}, G. and Bach, F.},
booktitle = {Advances in Neural Information Processing Systems 29},
_editor = {D. D. Lee and M. Sugiyama and U. V. Luxburg and I. Guyon and R. Garnett},
pages = {3440--3448},
year = {2016},
_publisher = {Curran Associates, Inc.},
_url = {http://papers.nips.cc/paper/6566-stochastic-optimization-for-large-scale-optimal-transport.pdf}
}

@inproceedings{seguy2018large,
title={Large Scale Optimal Transport and Mapping Estimation},
author={Vivien Seguy and Bharath Bhushan Damodaran and Remi Flamary and Nicolas Courty and Antoine Rolet and Mathieu Blondel},
booktitle={International Conference on Learning Representations},
year={2018},
_url={https://openreview.net/forum?id=B1zlp1bRW},
}

@article {EcksteinKupper.21,
    AUTHOR = {Eckstein, Stephan and Kupper, Michael},
     TITLE = {Computation of optimal transport and related hedging problems
              via penalization and neural networks},
   JOURNAL = {Appl. Math. Optim.},
  FJOURNAL = {Applied Mathematics and Optimization},
    VOLUME = {83},
      YEAR = {2021},
    NUMBER = {2},
     PAGES = {639--667},
      _ISSN = {0095-4616,1432-0606},
   MRCLASS = {49Q22 (49M25 49M29 65K99 68T07)},
  MRNUMBER = {4239795},
MRREVIEWER = {Giuseppe\ Devillanova},
       _DOI = {10.1007/s00245-019-09558-1},
       _URL = {https://doi.org/10.1007/s00245-019-09558-1},
}

@inproceedings{GulrajaniAhmedArjovskyDumoulinCourville.17,
author = {Gulrajani, Ishaan and Ahmed, Faruk and Arjovsky, Martin and Dumoulin, Vincent and Courville, Aaron},
title = {Improved Training of {W}asserstein {GAN}s},
year = {2017},
_isbn = {9781510860964},
_publisher = {Curran Associates Inc.},
_address = {Red Hook, NY, USA},
booktitle = {Proceedings of the 31st International Conference on Neural Information Processing Systems},
pages = {5769--5779},
_series = {NIPS'17}
}

@article{WieselXu.24,
    AUTHOR = {Wiesel, Johannes and Xu, Xingyu},
     TITLE = {Sparsity of Quadratically Regularized Optimal
              Transport: Bounds on Concentration and Bias},
   JOURNAL = {SIAM J. Math. Anal.},
  FJOURNAL = {SIAM Journal on Mathematical Analysis},
    VOLUME = {57},
      YEAR = {2025},
    NUMBER = {6},
     PAGES = {6498--6521},
      ISSN = {0036-1410,1095-7154},
   MRCLASS = {49Q22},
  MRNUMBER = {4986738},
       DOI = {10.1137/25M1723633},
       URL = {https://doi.org/10.1137/25M1723633},
}

@article {EssidSolomon.18,
    AUTHOR = {Essid, Montacer and Solomon, Justin},
     TITLE = {Quadratically regularized optimal transport on graphs},
   JOURNAL = {SIAM J. Sci. Comput.},
  FJOURNAL = {SIAM Journal on Scientific Computing},
    VOLUME = {40},
      YEAR = {2018},
    NUMBER = {4},
     PAGES = {A1961--A1986},
      _ISSN = {1064-8275},
   MRCLASS = {65K10 (05C90 90C35)},
  MRNUMBER = {3820384},
MRREVIEWER = {Savin Trean\c{t}\u{a}},
       _DOI = {10.1137/17M1132665},
       _URL = {https://doi.org/10.1137/17M1132665},
}

@article{Muzellec.2017.AAAI,
  title = {Tsallis Regularized Optimal Transport and Ecological Inference},
  volume = {31},
  ISSN = {2159-5399},
  url = {http://dx.doi.org/10.1609/aaai.v31i1.10854},
  DOI = {10.1609/aaai.v31i1.10854},
  number = {1},
  journal = {Proceedings of the AAAI Conference on Artificial Intelligence},
  publisher = {Association for the Advancement of Artificial Intelligence (AAAI)},
  author = {Muzellec,  Boris and Nock,  Richard and Patrini,  Giorgio and Nielsen,  Frank},
  year = {2017},
  month = feb 
}

@article {Schmitzer.19,
    AUTHOR = {Schmitzer, B.},
     TITLE = {Stabilized sparse scaling algorithms for entropy regularized
              transport problems},
   JOURNAL = {SIAM J. Sci. Comput.},
  FJOURNAL = {SIAM Journal on Scientific Computing},
    VOLUME = {41},
      YEAR = {2019},
    NUMBER = {3},
     PAGES = {A1443--A1481},
      ISSN = {1064-8275},
   MRCLASS = {49Q20 (65K05 90C25)},
  MRNUMBER = {3947294},
       DOI = {10.1137/16M1106018},
       URL = {https://doi.org/10.1137/16M1106018},
}

@article{ChizatDelalandeVaskevicius.25,
  author  = {Chizat, L{\'e}na{\"\i}c and Delalande, Alex and Va{\v{s}}kevi{\v{c}}ius, Tomas},
  title   = {Sharper exponential convergence rates for {Sinkhorn}'s algorithm in continuous settings},
  journal = {Math. Program.},
  fjournal = {Mathematical Programming},
  series  = {Series A},
  volume  = {215},
  pages   = {809--858},
  year    = {2026},
  doi     = {10.1007/s10107-025-02242-z},
  url     = {https://doi.org/10.1007/s10107-025-02242-z}
}

@book {Nesterov.04,
    AUTHOR = {Nesterov, Yurii},
     TITLE = {Introductory lectures on convex optimization},
    SERIES = {Applied Optimization},
    VOLUME = {87},
      NOTE = {A basic course},
 PUBLISHER = {Kluwer Academic Publishers, Boston, MA},
      YEAR = {2004},
     PAGES = {xviii+236},
      ISBN = {1-4020-7553-7},
   MRCLASS = {90-02 (90-01 90C25)},
  MRNUMBER = {2142598},
       DOI = {10.1007/978-1-4419-8853-9},
       URL = {https://doi.org/10.1007/978-1-4419-8853-9},
}

@article {FRANKLIN.1989,
    AUTHOR = {Franklin, Joel and Lorenz, Jens},
     TITLE = {On the scaling of multidimensional matrices},
   JOURNAL = {Linear Algebra Appl.},
  FJOURNAL = {Linear Algebra and its Applications},
    VOLUME = {114/115},
      YEAR = {1989},
     PAGES = {717--735},
      ISSN = {0024-3795,1873-1856},
   MRCLASS = {15A99},
  MRNUMBER = {986904},
MRREVIEWER = {D.\ S.\ Tracy},
       DOI = {10.1016/0024-3795(89)90490-4},
       URL = {https://doi.org/10.1016/0024-3795(89)90490-4},
}

@article{LorenzMahler.22,
    AUTHOR = {Lorenz, Dirk and Mahler, Hinrich},
     TITLE = {Orlicz space regularization of continuous optimal transport
              problems},
   JOURNAL = {Appl. Math. Optim.},
  FJOURNAL = {Applied Mathematics and Optimization},
    VOLUME = {85},
      YEAR = {2022},
    NUMBER = {2},
     PAGES = {Paper No. 14, 33},
      _ISSN = {0095-4616,1432-0606},
   MRCLASS = {49Q22 (49J45 49N15)},
  MRNUMBER = {4409806},
MRREVIEWER = {Daniele\ Semola},
       _DOI = {10.1007/s00245-022-09826-7},
       _URL = {https://doi.org/10.1007/s00245-022-09826-7},
}

@InProceedings{blondel18quadratic,
  title = 	 {Smooth and Sparse Optimal Transport},
  author = 	 {Blondel, Mathieu and Seguy, Vivien and Rolet, Antoine},
  _booktitle = 	 {Proceedings of the Twenty-First International Conference on Artificial Intelligence and Statistics},
  pages = 	 {880--889},
  year = 	 {2018},
  _editor = 	 {Storkey, Amos and Perez-Cruz, Fernando},
  volume = 	 {84},
  series = 	 {Proceedings of Machine Learning Research},
  _month = 	 {09--11 Apr},
  _publisher =    {PMLR},
  _pdf = 	 {http://proceedings.mlr.press/v84/blondel18a/blondel18a.pdf},
  _url = 	 {https://proceedings.mlr.press/v84/blondel18a.html},
}

@article{GonzalezSanzNutz2024.Scalar,
      title={Sparsity of Quadratically Regularized Optimal Transport: Scalar Case}, 
      author={Alberto González-Sanz and Marcel Nutz},
      _year={2024},
     JOURNAL = {SIAM J. Math. Anal., forthcoming.},
     note={arXiv:2410.03353},
      archivePrefix={arXiv},
      primaryClass={math.OC}
}

@book {Brezis2011,
    AUTHOR = {Brezis, Haim},
     TITLE = {Functional analysis, {S}obolev spaces and partial differential
              equations},
    SERIES = {Universitext},
 PUBLISHER = {Springer, New York},
      YEAR = {2011},
     PAGES = {xiv+599},
      _ISBN = {978-0-387-70913-0}
}

@article{GarrizmolinaElAl.2024,
      title={Infinitesimal behavior of Quadratically Regularized Optimal Transport and its relation with the Porous Medium Equation}, 
      author={Alejandro Garriz-Molina and Alberto González-Sanz and Gilles Mordant},
      year={2024},
     journal={ arXiv:2407.21528},
      archivePrefix={arXiv},
      primaryClass={math.AP}
}

@article {Eckstein.Nutz.2023,
    AUTHOR = {Eckstein, Stephan and Nutz, Marcel},
     TITLE = {Convergence rates for regularized optimal transport via
              quantization},
   JOURNAL = {Math. Oper. Res.},
  FJOURNAL = {Mathematics of Operations Research},
    VOLUME = {49},
      YEAR = {2024},
    NUMBER = {2},
     PAGES = {1223--1240},
      ISSN = {0364-765X,1526-5471},
   MRCLASS = {49Q22},
  MRNUMBER = {4755769},
MRREVIEWER = {Emanuel\ Indrei},
       DOI = {10.1287/moor.2022.0245},
       URL = {https://doi.org/10.1287/moor.2022.0245},
}

@article{BayraktarEckstein.2025.BJ,
  title={Stability and sample complexity of divergence regularized optimal transport},
  author={Bayraktar, Erhan and Eckstein, Stephan and Zhang, Xin},
  journal={Bernoulli},
  volume={31},
  number={1},
  pages={213--239},
  year={2025},
  publisher={Bernoulli Society for Mathematical Statistics and Probability}
}

@article{zhang.2023.manifoldlearningsparseregularised,
      title={Manifold Learning with Sparse Regularised Optimal Transport}, 
      author={Stephen Zhang and Gilles Mordant and Tetsuya Matsumoto and Geoffrey Schiebinger},
      year={2023},
     journal={ arXiv:2307.09816},
      archivePrefix={arXiv},
      primaryClass={stat.ML}
}

@inproceedings{Lahn.Mulchandani.Raghvendra.2019.NEURIPS,
author = {Lahn, Nathaniel and Mulchandani, Deepika and Raghvendra, Sharath},
title = {A graph theoretic additive approximation of optimal transport},
year = {2019},
publisher = {Curran Associates, Inc.},
booktitle = {Proceedings of the 33rd International Conference on Neural Information Processing Systems},
articleno = {1239},
  pages     = {13831--13841},
}

@article {Carlier.2022.SIOPT,
    AUTHOR = {Carlier, Guillaume},
     TITLE = {On the linear convergence of the multimarginal {S}inkhorn
              algorithm},
   JOURNAL = {SIAM J. Optim.},
  FJOURNAL = {SIAM Journal on Optimization},
    VOLUME = {32},
      YEAR = {2022},
    NUMBER = {2},
     PAGES = {786--794},
      ISSN = {1052-6234,1095-7189},
   MRCLASS = {49Q22 (45G15 49M05 65R20)},
  MRNUMBER = {4418040},
MRREVIEWER = {M.\ Isabel\ Berenguer},
       DOI = {10.1137/21M1410634},
       URL = {https://doi.org/10.1137/21M1410634},
}

@inproceedings{Cuturi.2013.Neurips,
 author = {Cuturi, Marco},
 booktitle = {Advances in Neural Information Processing Systems},
 editor = {C.J. Burges and L. Bottou and M. Welling and Z. Ghahramani and K.Q. Weinberger},
 pages = {},
 publisher = {Curran Associates, Inc.},
 title = {Sinkhorn Distances: Lightspeed Computation of Optimal Transport},
 url = {https://proceedings.neurips.cc/paper_files/paper/2013/file/af21d0c97db2e27e13572cbf59eb343d-Paper.pdf},
 volume = {26},
 year = {2013}
}

@article {Nutz.2024,
    AUTHOR = {Nutz, Marcel},
     TITLE = {Quadratically regularized optimal transport: existence and
              multiplicity of potentials},
   JOURNAL = {SIAM J. Math. Anal.},
  FJOURNAL = {SIAM Journal on Mathematical Analysis},
    VOLUME = {57},
      YEAR = {2025},
    NUMBER = {3},
     PAGES = {2622--2649},
      ISSN = {0036-1410,1095-7154},
   MRCLASS = {49Q22 (90C25)},
  MRNUMBER = {4907548},
       DOI = {10.1137/24M1638136},
       URL = {https://doi.org/10.1137/24M1638136},
}

@article {Lorenz.2019,
    AUTHOR = {Lorenz, Dirk A. and Manns, Paul and Meyer, Christian},
     TITLE = {Quadratically regularized optimal transport},
   JOURNAL = {Appl. Math. Optim.},
  FJOURNAL = {Applied Mathematics and Optimization},
    VOLUME = {83},
      YEAR = {2021},
    NUMBER = {3},
     PAGES = {1919--1949},
      ISSN = {0095-4616,1432-0606},
   MRCLASS = {49Q22 (49N15 65H99 90C25)},
  MRNUMBER = {4261277},
MRREVIEWER = {Luca\ Granieri},
       DOI = {10.1007/s00245-019-09614-w},
       URL = {https://doi.org/10.1007/s00245-019-09614-w},
}

@article{Peyre.Cuturi.2019.Book,
  title = {Computational Optimal Transport: With Applications to Data Science},
  volume = {11},
  ISSN = {1935-8245},
  url = {http://dx.doi.org/10.1561/2200000073},
  DOI = {10.1561/2200000073},
  number = {5–6},
  journal = {Foundations and Trends{\textregistered} in Machine Learning},
  publisher = {Now Publishers},
  author = {Peyré,  Gabriel and Cuturi,  Marco},
  year = {2019},
  pages = {355–607}
}

@article{lorenz.2019.preprint,
      title={Orlicz-space regularization for optimal transport and algorithms for quadratic regularization}, 
      author={Dirk A. Lorenz and Hinrich Mahler},
      year={2019},
      journal={arXiv:1909.06082}
}
\end{document}